\newtheorem{thm}{Theorem}[section]
\newtheorem{conj}[thm]{Conjecture}
\newtheorem{prop}[thm]{Proposition}
\newtheorem{claim}{Claim}
\newtheorem{lemma}[thm]{Lemma}
\newtheorem{cor}[thm]{Corollary}
\theoremstyle{definition}
\newtheorem{defi}[thm]{Definition}
\def\eps{\varepsilon}
\newcounter{casenum}
\newcommand*{\rom}[1]{\expandafter{\romannumeral #1\relax}}
\def\@cite#1#2{{\normalfont[{\bfseries#1\if@tempswa , #2\fi}]}}
\title{Integer colorings with forbidden rainbow sums
}
\author{Yangyang Cheng\thanks{School of Mathematics, Shandong University,
Jinan, China. Email: \texttt{mathsoul@mail.sdu.edu.cn}.}
\quad Yifan Jing\thanks{Mathematical Institute, University of Oxford, Oxfordshire, UK. Email: \texttt{yifan.jing@maths.ox.ac.uk}.}
\quad Lina Li\thanks{Department of Combinatorics \& Optimazation, University of Waterloo, Waterloo, Canada. Email: \texttt{lina.li@uwaterloo.ca}.}
\quad Guanghui Wang\thanks{School of Mathematics and Data Science Institute, Shandong University,
Jinan, China. Email: \texttt{ghwang@sdu.edu.cn}.}
\quad Wenling Zhou\thanks{School of Mathematics, Shandong University,
Jinan, China, and  Laboratoire Interdisciplinaire des Sciences du Num\'{e}rique, Universit\'{e} Paris-Saclay, Orsay, France. Email: \texttt{gracezhou@mail.sdu.edu.cn}.}
}
\date{}
\begin{document}

\maketitle

\begin{abstract}
For a set of positive integers $A \subseteq [n]$, 
an $r$-coloring of $A$ is rainbow sum-free if it contains no rainbow Schur triple. 
In this paper we initiate the study of the rainbow Erd\H{o}s-Rothchild problem in the context of sum-free sets, which asks for the subsets of $[n]$ with the maximum number of rainbow sum-free $r$-colorings. We show that for $r=3$, the interval $[n]$ is optimal, while for $r\geq8$, the set $[\lfloor n/2 \rfloor, n]$ is optimal.
We also prove a stability theorem for $r\geq4$. 
The proofs rely on the hypergraph
container method, and some ad-hoc stability analysis.
\end{abstract}

\bigskip

\noindent {\bf Keywords:} rainbow sum-free; $r$-coloring; container method

\section{Introduction}
An interesting direction of combinatorics in recent years is the study of multicolored versions of classical extremal results, whose origin can be traced back to a question of Erd\H{o}s and Rothschild~\cite{E} in 1974. They asked which $n$-vertex graph admits the maximum number of $2$-edge-colorings without monochromatic triangles, and conjectured that the complete balanced bipartite graph is the optimal graph. About twenty years later, Yuster~\cite{yuster1996number} confirmed this conjecture for {$n\ge 6$}.

\subsection{Erd\H{o}s-Rothschild problems in various settings}
There are many natural generalizations of the Erd\H{o}s-Rothschild problem. The most obvious one may be to ask it for graphs other than the triangles, and one may also increase the number of colors used. A graph G on $n$ vertices is called \textit{$(r, F)$-extremal} if it admits the maximum number of $r$-edge-colorings without any monochromatic copies of $F$ among all $n$-vertex graphs.
Alon, Balogh, Keevash and Sudakov~\cite{ABKS} 
showed that the Tur\'{a}n graph $T_{k}(n)$ is the unique $(r, K_{k+1})$-extremal graph for $k\geq 2$, $r\in\{2, 3\}$ and $n$ sufficiently large. Interestingly, they also showed that Tur\'{a}n graphs $T_{k}(n)$ are no longer optimal for $r\geq 4$. Indeed, Pikhurko, and Yilma~\cite{pikhurko2012maximum} later proved that $T_{4}(n)$ is the unique $(4, K_3)$-extremal graph, while $T_{9}(n)$ is the unique $(4, K_4)$-extremal graph. Determining the extremal configurations in general for $k\geq 2$ and $r\geq 4$ turned out to be a difficult problem. For further results along this line of research (when $F$ is a non-complete graph or a hypergraph), we refer to~\cite{hoppen2012edge, hoppen2014edge, hoppen2015edge, lefmann2013exact, lefmann2009colourings, lefmann2010structural}.

Another variant of this problem is to study edge-colorings of a graph avoiding a copy of $F$ with a prescribed color pattern. For an $r$-colored graph $\hat{F}$, a graph $G$ on $n$ vertices is called $(r, \hat{F})$-extremal if it admits the maximum number of $r$-colorings which contain no subgraph whose color pattern is isomorphic to $\hat{F}$.
This line of work was initiated by Balogh~\cite{balogh2006remark}, who showed that the Tur\'{a}n graph $T_{k}(n)$ once again yields the maximum number of 2-colorings avoiding $H_{k+1}$, where $H_{k+1}$ is any 2-coloring of $K_{k+1}$ that uses both colors. For $r\geq 3$, the behavior of $(r, H_{k+1})$-extremal  graphs was studied by Benevides, Hoppen, Sampaio, Lefmann, and Odermann, see~\cite{BHS, hoppen2015color, hoppen2017graphs, hoppen2017rainbow}. In particular, the case when $\hat{F}=\hat{K}_3$ is a triangle with rainbow pattern has recently received a lot of attention (for its relation to Gallai colorings).
Hoppen, Lefmann and Odermann~\cite{hoppen2017graphs} first proved that the Tur\'{a}n graph $T_2(n)$ is the unique $(r, \hat{K}_3)$-extremal graph for $r\geq 5$. Very recently, Balogh and Li~\cite{balogh2018typical}, confirming conjectures of~\cite{BHS} and~\cite{hoppen2017graphs}, showed that the complete graph $K_{n}$ is the unique $(3, \hat{K}_3)$-extremal graph, while the Tur\'{a}n graph $T_2(n)$ becomes optimal as $r\geq 4$.

The Erd\H{o}s-Rothschild problem can also be extended to other discrete structures. In the domain of extremal set theory, Hoppen, Kohayakawa and Lefmann~\cite{hoppen2012hypergraphs} solved the Erd\H{o}s-Rothschild extension of the famous Erd\H{o}s-Ko-Rado Theorem. They, for instance, showed that the optimal $\ell$-intersecting families (each set is of size $k$) yields the maximum number of $r$-colorings in which every color class is $\ell$-intersecting for $r\in\{2, 3\}$, and also provided a fairly complete characterization of the corresponding extremal family for $r\geq 4$. Hoppen, Lefmann and Odermann~\cite{hoppen2016coloring}, and Clemens, Das and Tran~\cite{clemens2018colourings} later studied the Erd\H{o}s-Rothschild extension of the Erd\H{o}s-Ko-Rado Theorem for vector spaces. Moving the problem to the context of power set lattice, recently, Das, Glebov, Sudakov and Tran~\cite{das2019colouring} investigated the Erd\H{o}s-Rothschild extension of Sperner's Theorem, and proved that the largest antichain yields the maximum number of $r$-colorings, in which each color class is an antichain, for $r\in\{2, 3\}$. As for many of the previous results, they demonstrated that as $r$ grows, the largest antichain is no longer optimal. They also determined that the extremal configurations for 2-colorings without monochromatic $k$-chains are the largest $k$-chain-free family. The extremal configurations for $r\geq 3$ and $k\geq 2$ are widely unknown.
It came recently to our attention that
the rainbow Erd\H{o}s-Rothschild problem has 
also been extended to other discrete structures. Li, Broersma and Wang~\cite{2022Integer} studied the number of $r$-colorings of $[n]$
without rainbow 3-term arithmetic progressions and Lin, Wang and the fifth author~\cite{lin2022integer}
considered the problem for $k$-term arithmetic progressions.

\subsection{Erd\H{o}s-Rothschild problems for sum-free sets}
Given integers $n\geq  m \geq 1$, write $[m,n]:=\{m,\ldots,n\}$ and $[n]:=\{1,...,n\}$.

\begin{defi}[Schur triple \& Sum-free set]
A \textit{Schur triple} or a \textit{sum} in an abelian group $G$ (or in $[n]$) is a triple $(a, b, c)$ with $a+b=c$. A set $A\subseteq G$ (or $A\subseteq [n]$) is \textit{sum-free} if $A$ contains no such triple. 
\end{defi}
Given a set A of numbers,  an \textit{r-coloring} of $A$ is a mapping $f: A \rightarrow [r]$, which assigns one color to each element of $A$. An $r$-coloring of A is called a \textit{sum-free $r$-coloring} if each of the color classes is a sum-free set. Sum-free colorings are among the classical objects studied in extremal combinatorics and can be traced back to Schur's theorem, one of the seminal results in Ramsey theory.

The Erd\H{o}s-Rothschild extension for sum-free sets has been pursued by Liu, Sharifzadeh and Staden~\cite{liu2017maximum} for subsets of the integers, and H\`{a}n and Jim{\'e}nez~\cite{han2018maximum} for finite abelian groups. More specifically, they investigated the extremal configurations which maximize the number of sum-free $r$-colorings.
In the setting of integers, it is well known that the largest sum-free set in $[n]$ has size $\lceil n/2\rceil$. Liu, Sharifzadeh and Staden~\cite{liu2017maximum} determined the extremal configurations for $r=2$.

\begin{thm}{\rm (\cite[Theorem 1.2]{liu2017maximum}).} 
There exists $n_0 > 0$ such that for all integers $n\geq n_0$, the number of sum-free $2$-colorings of a subset $A\subseteq [n]$ is at most $2^{\lceil n/2\rceil}$. Moreover, the extremal subsets are $\{1, 3, 5, \cdots, 2\lceil n/2\rceil-1\}$, and $[\lfloor n/2\rfloor+1, n]$; and if $n$ is even, we additionally have $[n/2, n-1]$, and $[n/2, n]$.
\end{thm}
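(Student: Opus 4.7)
The plan is to apply the hypergraph container method to the Schur-triple hypergraph $H$ on $[n]$, whose edges are all triples $\{a,b,c\}$ with $a+b=c$. Sum-free sets are precisely independent sets of $H$, and a sum-free $2$-coloring of $A\subseteq[n]$ is an ordered partition $A=B_1\sqcup B_2$ with each $B_i$ independent. I would first invoke the container lemma for $H$ (which has the right vertex-degree and codegree conditions relative to its average degree) to obtain a family $\mathcal{C}$ of containers with $|\mathcal{C}|\leq 2^{o(n)}$, each of size at most $\lceil n/2\rceil+o(n)$, with the further refinement that every $C\in\mathcal{C}$ lies within Hamming distance $o(n)$ of one of a bounded list of ``fingerprint'' maximum sum-free sets of $[n]$ (the odd numbers, the upper interval $[\lfloor n/2\rfloor+1,n]$, and related arithmetic shifts classified by the Deshouillers--Freiman--S\'os--Temkin style description of extremal sum-free sets).

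The counting step converts this structural bound into
\[
  SF_2(A)\;\leq\;\sum_{(C_1,C_2)\in\mathcal{C}^2}\;2^{|A\cap C_1\cap C_2|},
\]
since every sum-free $2$-coloring $(B_1,B_2)$ of $A$ must satisfy $B_i\subseteq C_i$ for some pair of containers. If $C_1$ and $C_2$ are close to \emph{different} fingerprints, a direct check shows $|C_1\cap C_2|\leq \lceil n/2\rceil-\Omega(n)$, so the $2^{o(n)}$ factor from $|\mathcal{C}|^2$ is absorbed with room to spare. Thus only pairs $(C_1,C_2)$ close to a common fingerprint $M$ contribute meaningfully, which forces $A$ itself to lie within Hamming distance $o(n)$ of some $M$. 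This reduces the problem to a stability analysis local to each fingerprint.

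For each fingerprint $M$, the goal is to show that any element in $A\setminus M$ or missing from $A\cap M$ strictly decreases the count below $2^{\lceil n/2\rceil}$. The key mechanism is that an element outside $M$ typically lies in many Schur triples meeting $M$, and each such triple forbids the two endpoints (colored the same) from sharing a color; a supersaturation-type estimate should yield a multiplicative factor strictly less than $2$ per extra element, iterating to show that the only way to reach exactly $2^{\lceil n/2\rceil}$ is to have $A$ be one of the listed fingerprints. The case $n$ even is subtler because $n/2$ sits in only the single degenerate triple $\{n/2,n/2,n\}$, whose constraint costs a factor of exactly $2$ rather than strictly less; this is precisely what allows the extra extremal sets $[n/2,n-1]$ and $[n/2,n]$, and would be verified directly.

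The main obstacle is the passage from the approximate container bound $2^{\lceil n/2\rceil+o(n)}$ to the exact value $2^{\lceil n/2\rceil}$ together with the complete list of extremal configurations. Container and stability arguments naturally lose $o(n)$ in the exponent, so extracting the exact extremal characterization requires ad-hoc supersaturation estimates that are sharp at the boundary --- one must rule out, for every possible single-element or bounded-size perturbation of each fingerprint, the possibility of matching the bound. This exact stability step, carried out case by case for each fingerprint and particularly delicate in the $n$ even case where multiple extremal families coexist, is where the bulk of the technical work would lie.
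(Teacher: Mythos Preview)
This theorem is not proved in the paper at all: it is quoted verbatim from Liu, Sharifzadeh and Staden~\cite{liu2017maximum} as background for the sum-free Erd\H{o}s--Rothschild problem, and the paper supplies no argument for it. Consequently there is no ``paper's own proof'' against which to compare your proposal.

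For what it is worth, your outline --- containers for the Schur-triple hypergraph, a stability classification of near-extremal sum-free sets via the Deshouillers--Freiman--S\'os--Temkin description, and a local perturbation analysis to pin down the exact extremal families --- is a plausible shape for such a result and is in the spirit of the methods used elsewhere in this paper (Theorem~\ref{container}, Lemmas~\ref{Stabilitylem2} and~\ref{Stabilitylem3}). But any assessment of correctness or comparison of approaches would have to be made against the Liu--Sharifzadeh--Staden paper itself, not the present one.
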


Unlike the graph case, in the sum-free setting, there are extremal configurations which are not sum-free even for 2 colors. Therefore, one would expect a more sophisticated extremal behavior as $r$ grows. Although some asymptotic bounds were obtained in~\cite{liu2017maximum}, the characterization of extremal sets for $r\geq 3$ remains widely open.

Such problem was also studied for finite abelian groups. Let $G$ denote a finite abelian group. Over fifty years ago, Diananda and Yap~\cite{diananda1969maximal} determined the maximum density $\mu(G)$ of a sum-free set in $G$ whenever $|G|$ has a prime factor $q \not\equiv 1 \mod 3$, but it was not until 2005 that Green and Ruzsa~\cite{green2005sum} completely solved this extremal question for all finite abelian group. H\`{a}n and Jim{\'e}nez~\cite{han2018maximum} investigated the Erd\H{o}s-Rothschild extension for sum-free sets on some special abelian groups.

\begin{thm}{\rm (\cite[Theorem 3]{han2018maximum}).}
Let $r\in \{2,3\}$, $q\in \mathbb{N}$ and let $G$ be a abelian group of sufficiently large order, which has a prime divisor $q$ such that $q\equiv 2 \mod 3$. Then the number of sum-free $r$-colorings of a set $A\subseteq G$ is at most $r^{\mu(G)}$. Moreover, the maximum is only achieved by the largest sum-free set.
\end{thm}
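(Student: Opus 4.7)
The plan is to reduce the problem to counting ordered $r$-tuples of pairwise disjoint sum-free sets in $G$. A sum-free $r$-coloring of $A$ is the same as an ordered partition $A = B_1 \sqcup \cdots \sqcup B_r$ with each $B_i$ sum-free, so maximizing the number of colorings over all $A \subseteq G$ reduces to bounding the number of such tuples. The easy lower bound $r^{\mu(G)}$ is realized by taking $A$ to be a largest sum-free set and coloring it arbitrarily.

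First I would apply the hypergraph container method for sum-free sets in abelian groups: there exists a family $\mathcal{F}$ of subsets of $G$ with $|\mathcal{F}| \leq 2^{o(|G|)}$ such that every sum-free set in $G$ is contained in some $F \in \mathcal{F}$, and each $F$ contains at most $o(|G|^2)$ Schur triples. Combined with the Green--Ruzsa stability theorem for sum-free sets in abelian groups (which applies because $G$ has a prime factor $q \equiv 2 \pmod{3}$), every such container is close in symmetric difference to one of the largest sum-free sets of $G$, each of which is a union of cosets of a subgroup of index related to $q$.

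Second, I would bound the total number of sum-free $r$-colorings by summing, over all $r$-tuples $(F_1, \ldots, F_r) \in \mathcal{F}^r$, the number of colorings $f\colon F_1 \cup \cdots \cup F_r \to [r]$ whose $i$-th color class is a sum-free subset of $F_i$. For each tuple this count is at most $r^{|F_1 \cup \cdots \cup F_r|}$, which is only slightly larger than $r^{\mu(G)}$ when all $F_i$ are close to the same extremal sum-free set; when the $F_i$ cluster around different extremal sets, the many Schur triples that appear in $F_i \cup F_j$ cut the count further. To convert this into the sharp bound $r^{\mu(G)}$, I would exploit Schur triples inside $F_1 \cup \cdots \cup F_r$: each triple excludes a constant fraction of the $r^{|\cdot|}$ candidate colorings, and a supersaturation count in the spirit of Liu--Sharifzadeh--Staden absorbs the $2^{o(|G|)}$ factor coming from $|\mathcal{F}|^r$.

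The uniqueness of the extremal $A$ is the main obstacle. Proving that $A$ must be exactly a maximum sum-free set, and not merely close to one, requires a perturbation argument: if $A$ differs from a maximum sum-free set $S^{*}$ by adding an element $x \notin S^{*}$ or removing an element $y \in S^{*}$, one must show the change strictly decreases the number of sum-free $r$-colorings. Adding $x$ creates many new Schur triples meeting $S^{*}$, and each such triple kills a positive fraction of colorings; removing $y$ loses a factor of $r$ with no compensating gain. Ensuring these two effects never cancel uses the precise coset structure of the extremal sum-free sets, which is where the hypothesis $q \equiv 2 \pmod{3}$ is essential. The case $r = 3$ is noticeably more delicate than $r = 2$, since with three colors each Schur triple imposes a weaker constraint (more than half of the $3^3 = 27$ colorings of a triple avoid monochromaticity), so the per-triple savings must be accounted for much more carefully.
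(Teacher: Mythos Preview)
This theorem is not proved in the present paper: it is quoted verbatim from H\`an and Jim\'enez \cite{han2018maximum} as background in the introduction (Section~1.2), and the paper provides no proof of it. Consequently there is no ``paper's own proof'' to compare your proposal against.

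For what it is worth, your outline is broadly the right shape for the actual H\`an--Jim\'enez argument: containers for sum-free sets in abelian groups, Green--Ruzsa stability to pin down the containers near extremal sum-free sets, and then a stability/perturbation analysis to upgrade to the exact bound and uniqueness. But none of that takes place in this paper, which is concerned with the \emph{rainbow} sum-free problem in $[n]$ rather than the monochromatic problem in abelian groups. If the task was to locate and compare with the paper's proof, the correct observation is simply that no such proof exists here.
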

For more than three colors this phenomenon does not persist in general and the problem becomes considerably more complicated. For more details, we refer the readers to~\cite{han2018maximum}. For other abelian groups, despite some asymptotic bounds presented in~\cite{han2018maximum}, the exact extremal phenomena is unknown even for 2 colors.

\subsection{Our results}
In this paper, we consider a rainbow variant of the Erd\H{o}s-Rothschild problem for sum-free sets in $[n]$.
A Schur triple or a sum $(x,y,z)$ is a \textit{rainbow sum} if $x,y,z$ are colored with different colors. Note that a rainbow sum must have three distinct elements. For convenience, sometimes we would use the following definitions, which are slightly different to the classical notations on sum-free sets.
\begin{defi}[Restricted Schur triple \& Restricted sum-free set]
A \textit{restricted Schur triple} or a \textit{restricted sum} in $[n]$ is 
a triple $\{a, b, c\}$ with $a<b<c$ and $a+b=c$. A set $A\subseteq [n]$ is \textit{restricted sum-free} if $A$ contains no such triple.
\end{defi}

For any integer $n\geq7$, it is not hard to show that the largest restricted sum-free sets in $[n]$ have size $\lfloor n/2 \rfloor+1$. If $n$ is even, then the only subset attaining this bound is $\left[\frac{n}{2}, n\right]$; if $n$ is odd, then the maximum restricted sum-free sets are attained by the following four sets: $\left\{\frac{n-1}{2}, \frac{n-1}{2}+1, \ldots, n-1\right\}$, $\left\{\frac{n-1}{2}, \frac{n-1}{2}+2, \ldots, n\right\}$, $\left[\frac{n+1}{2}, n\right]$, and $\{1, 3, 5, \ldots, n\}$. 

Given a set of positive integers $A \subseteq [n]$, an $r$-coloring of $A$ is \textit{rainbow sum-free} if it contains no rainbow sum.
For a positive integer $r$ and a set $A\subseteq [n]$, we write $g(A, r)$ for the number of rainbow sum-free $r$-colorings of $A$ and define
\[
g(n, r):=\max_{A\subseteq [n]}g(A,r).
\]
A set $A\subseteq [n]$ is \textit{rainbow $r$-extremal} if $g(A, r)=g(n, r)$.
When $r\in\{1,2\}$, it is trivial to see that $g(n, r)=r^n$ for all positive integers $n$, and the only extremal set is the interval $[n]$, since for every subset $A\subseteq [n]$, all $r$-colorings of $A$ are rainbow sum-free. For $r \geq 3$, the characterization of the extremal sets requires substantially more work.

Our first main result is an upper bound on the number of rainbow sum-free $r$-colorings of dense sets.
\begin{thm}\label{thm: hrange}
For every integer $r\geq 3$, there exists $n_0\in \mathbb{N}$ such that for all $n>n_0$ the following holds. For a set $A\subseteq [n]$ with $|A|\geq (1-{r}^{-3})n$, the number of rainbow sum-free $r$-colorings $g(A,r)$ satisfies
$$g(A, r)\leq \binom{r}{2}\cdot 2^{|A|}+2^{-\frac{n}{26\log n}}2^n.$$
\end{thm}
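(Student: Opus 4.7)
My plan is to deploy the hypergraph container method on an auxiliary coloring hypergraph and then split its containers into those producing the main term $\binom{r}{2}2^{|A|}$ and those producing the error term. Define the $3$-uniform hypergraph $\mathcal{H}$ on vertex set $A\times[r]$ whose hyperedges are the triples $\{(x,c_1),(y,c_2),(z,c_3)\}$ with $\{x,y,z\}$ a Schur triple in $A$ and $c_1,c_2,c_3\in[r]$ distinct; rainbow sum-free $r$-colorings $\chi$ of $A$ correspond exactly to independent transversals $\{(x,\chi(x)):x\in A\}$ in $\mathcal{H}$. Because $|A|\geq(1-r^{-3})n$, the set $A$ contains $\Theta(n^2)$ Schur triples, so $\mathcal{H}$ has $\Theta_r(n^2)$ hyperedges, maximum vertex degree $O_r(n)$, and every pair of vertices has co-degree bounded by a constant (a pair of elements of $[n]$ lies in at most three Schur triples). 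These parameters satisfy the hypotheses of the Balogh--Morris--Samotij / Saxton--Thomason container lemma, which yields a family $\mathcal{C}$ of containers $C\subseteq A\times[r]$ with $\log|\mathcal{C}|=O(n/\log n)$ such that every rainbow sum-free coloring is trapped in some $C\in\mathcal{C}$ and each $C$ contains at most $o(n^2)$ hyperedges of $\mathcal{H}$.

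Setting $d_C(x):=|\{c:(x,c)\in C\}|$, the colorings trapped in $C$ number at most $\prod_{x\in A}d_C(x)$. Since the only independent sets in $\mathcal{H}$ of size essentially $2|A|$ are of the form $A\times\{i,j\}$, a standard refinement of the container lemma gives $|C|\leq 2|A|+o(n)$, and AM--GM yields $\prod_xd_C(x)\leq 2^{|A|}(1+o(1))$. I would then prove the following dichotomy: either (i) there exists $\{i,j\}\in\binom{[r]}{2}$ such that all but $o(n)$ pairs $(x,c)\in C$ satisfy $c\in\{i,j\}$; or (ii) $\prod_xd_C(x)\leq 2^{|A|-\Omega(n/\log n)}$. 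Summing (i) over the $\binom{r}{2}$ choices of the pair accounts for the main term $\binom{r}{2}2^{|A|}$, while summing (ii) over the $2^{O(n/\log n)}$ defect containers yields at most $2^{-n/(26\log n)}2^n$ after the constants are chosen appropriately.

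The main obstacle is establishing the dichotomy, which is an additive-combinatorial stability statement. Suppose $C$ falls into case (ii): then there exist three colors $i,j,k$ with linear-sized projections $C_i,C_j,C_k\subseteq A$ and at most $o(n^2)$ "bad" hyperedges of $\mathcal{H}$ lying inside $C$. After cleaning via an arithmetic removal lemma I may assume there is no tri-colored Schur triple $(x,y,z)\in C_i\times C_j\times C_k$ at all. Using $|A|\geq(1-r^{-3})n$, the Schur-triple structure of $[n]$ is inherited by $A$ up to a tiny defect, and the Green--Ruzsa / Deshouillers--Freiman--S\'os classification of near-extremal sum-free sets in $[n]$ then forces two of $C_i,C_j,C_k$ to be essentially complementary in $A$ while the third shrinks to sublinear size---contradicting case (ii) and driving $C$ into case (i). The quantitative bound $n/(26\log n)$ is dictated by the efficiency of the container lemma, and the density hypothesis $(1-r^{-3})n$ is precisely what is required to transport the sum-free stability theorem from $[n]$ to $A$.
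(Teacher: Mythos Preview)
Your high-level architecture---containers on the coloring hypergraph, then a dichotomy into containers dominated by a single pair $\{i,j\}$ versus the rest---matches the paper exactly. The genuine gap is in extracting the sharp main term. The AM--GM step does not give what you claim: from $\sum_x d_C(x)\le 2|A|+o(n)$ one obtains $\prod_x d_C(x)\le(2+o(1))^{|A|}=2^{|A|+o(n)}$, not $2^{|A|}(1+o(1))$, and the stray factor $2^{o(n)}$ destroys the leading constant $\binom{r}{2}$. Worse, even a correct per-container bound cannot simply be summed over all containers attached to a fixed pair, since a single $\{i,j\}$-coloring of $A$ lies inside many containers simultaneously. The paper resolves both issues by abandoning the product bound once the dominant pair is identified and counting \emph{colorings} directly: any rainbow sum-free coloring captured by such a container either uses only the colors $i,j$ (at most $2^{|A|}$ such colorings in total, irrespective of how many containers there are), or it uses a third color at some element $t$, whereupon the $\Omega(n)$ disjoint Schur triples through $t$ each force their other two elements to share a color, so these colorings number at most $2^{|A|-\Omega(n)}$. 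It is this argument, not AM--GM, that produces the sharp $\binom{r}{2}2^{|A|}$.

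Your proposed dichotomy proof via arithmetic removal plus Deshouillers--Freiman--S\'os is also not the paper's route and is incomplete as stated: that theorem classifies a \emph{single} large sum-free subset of $[n]$, not three interacting color classes subject only to a tri-colored sum-freeness constraint, so the claim that it ``forces two of $C_i,C_j,C_k$ to be essentially complementary while the third shrinks'' requires a separate argument you have not supplied. The paper instead establishes the dichotomy by bare-hands counting of rainbow triples inside the container---first showing that elements with palette of size $\ge 3$ number $O(n^{2/3})$, then that pigeonhole among the $\binom{r}{2}$ possible two-element palettes forces one pair to occupy all but $O(n/\log n)$ elements---with no removal lemma or sum-free structure theorem invoked; those tools appear only later, in the stability section. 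A minor aside: the container count $\log|\mathcal{C}|=O(n/\log n)$ you quote is stronger than what the standard lemma delivers for this hypergraph (the paper obtains $O(n^{2/3}\log^2 n)$), and your error-term balance would in any case hinge on the unspecified implied constants.
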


By choosing two of the $r$ colors and coloring the elements of $[n]$ arbitrarily with these two colors, one can easily obtain that
\begin{equation}\label{eq: int}
g([n], r)\geq \binom{r}{2}(2^n - 2) + r =\binom{r}{2}2^n - (r^2 - 2r).
\end{equation}
Therefore, Theorem~\ref{thm: hrange} is asymptotically sharp for $A=[n]$ and then the typical structure of rainbow sum-free $r$-colorings of $[n]$ immediately follows from~(\ref{eq: int}).
\begin{cor}\label{cor1}
For every integer $r\geq 3$, almost all rainbow sum-free $r$-colorings of $[n]$ are 2-colorings.
\end{cor}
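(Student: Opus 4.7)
The plan is to deduce the corollary directly from Theorem~\ref{thm: hrange} applied to the set $A=[n]$, which trivially satisfies the density hypothesis $|A|\geq(1-r^{-3})n$. The theorem then yields
$$g([n],r)\leq \binom{r}{2}\,2^n+2^{-n/(26\log n)}\,2^n,$$
while the construction behind (\ref{eq: int}) gives the matching lower bound $g([n],r)\geq \binom{r}{2}\,2^n-r^2+2r$. So, up to a vanishing relative error, the total number of rainbow sum-free $r$-colorings of $[n]$ is $\binom{r}{2}\,2^n$.

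Next I would count exactly the rainbow sum-free $r$-colorings of $[n]$ that use at most two colors: any such coloring is automatically rainbow sum-free, since a rainbow triple requires three distinct colors. Summing $2^n$ over the $\binom{r}{2}$ unordered pairs of colors counts each monochromatic coloring $r-1$ times while it should be counted once, so inclusion–exclusion produces exactly $\binom{r}{2}\,2^n-r(r-2)=\binom{r}{2}\,2^n-r^2+2r$ such colorings.

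Subtracting this exact count from the upper bound, the number of rainbow sum-free $r$-colorings of $[n]$ using three or more colors is at most
$$2^{-n/(26\log n)}\,2^n+r^2-2r,$$
and dividing by the lower bound $\binom{r}{2}\,2^n-r^2+2r$ yields a ratio that tends to $0$ as $n\to\infty$, which is exactly the claim. No step is genuinely hard: the content of the corollary is essentially packaged inside Theorem~\ref{thm: hrange}, and the only care needed is to correctly compute the inclusion–exclusion in the middle step so that the main terms $\binom{r}{2}\,2^n$ in the upper and lower bounds cancel and leave only the negligible remainder $2^{-n/(26\log n)}\,2^n$.
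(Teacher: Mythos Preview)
Your proof is correct and follows exactly the route indicated in the paper, which simply states that the corollary ``immediately follows from~(\ref{eq: int})'' together with Theorem~\ref{thm: hrange}. You have supplied the details the paper omits: the inclusion--exclusion count $\binom{r}{2}2^n-r(r-2)$ for the $2$-colorings matches the lower bound in~(\ref{eq: int}), so the main terms cancel and only the $2^{-n/(26\log n)}2^n$ remainder survives.
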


Now we turn to the extremal configurations of rainbow sum-free $r$-colorings. Let us first consider the case $r=3$. Similarly as in the Gallai coloring problem, two natural candidates of the extremal sets are the maximum restricted sum-free sets and the interval $[n]$.
Note that for every restricted sum-free set $A$,  we have $g(A, 3)\le 3^{\lfloor n/2 \rfloor+1} < g([n], 3)$ for all $n\ge 3$. Our second theorem shows that for three colors the interval $[n]$ is indeed optimal.

\begin{thm}\label{thm: three}
There exists $n_0\in \mathbb{N}$ such that for all $n > n_0$, among all subsets of $[n]$, the interval $[n]$ is the unique rainbow 3-extremal set.
\end{thm}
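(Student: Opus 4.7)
The lower bound $g([n],3)\ge 3\cdot 2^n-3$ is immediate: for each of the $\binom{3}{2}=3$ pairs of colors, every $2$-coloring of $[n]$ using only that pair is automatically rainbow sum-free (three distinct colors cannot appear), and inclusion--exclusion over the $3$ monochromatic overlaps yields exactly $3\cdot 2^n-3$ such colorings.

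For the matching upper bound I would show $g(A,3)<3\cdot 2^n-3$ for every $A\subsetneq[n]$ with $n$ large, splitting the analysis by $|A|$ into three regimes. In the \textbf{dense} regime $|A|\ge(1-1/27)n$, Theorem~\ref{thm: hrange} with $r=3$ applies directly and gives
\[
g(A,3)\le 3\cdot 2^{|A|}+2^{-n/(26\log n)}\cdot 2^n\le \tfrac{3}{2}\cdot 2^n+o(2^n),
\]
which is strictly less than $3\cdot 2^n-3$ since $|A|\le n-1$. In the \textbf{sparse} regime $|A|\le n\log_3 2\ (\approx 0.631\,n)$, the trivial bound $g(A,3)\le 3^{|A|}\le 2^n<3\cdot 2^n-3$ already suffices.

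The remaining \textbf{intermediate} regime $n\log_3 2<|A|<(1-1/27)n$ is where the real work lives, since neither $3^{|A|}$ nor Theorem~\ref{thm: hrange} in its stated form is effective. I would handle it by rerunning the hypergraph container machinery behind Theorem~\ref{thm: hrange} together with an ad-hoc stability step. The three key inputs are: (i) supersaturation, since $|A|>n/2$, implies $A$ contains $\Omega(n^2)$ restricted Schur triples; (ii) applying the container method to the $3$-uniform Schur-triple hypergraph and then a stability analysis tailored to this density range should force every rainbow sum-free $3$-coloring of $A$ to have some color class of size $o(n)$; and (iii) a direct count---choose one of $3$ colors for the ``small'' class, specify that class in at most $\binom{|A|}{o(n)}=2^{o(|A|)}$ ways, and $2$-color the rest---bounds the total by
\[
g(A,3)\le 3\cdot 2^{|A|}\cdot 2^{o(|A|)}\le 2^{(1-1/27+o(1))n}=o(2^n),
\]
comfortably less than $3\cdot 2^n-3$.

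The main obstacle is clearly step (ii): pushing the ``one color class is $o(n)$'' stability conclusion below the density threshold of Theorem~\ref{thm: hrange}. This is precisely the ``ad-hoc stability analysis'' advertised in the abstract, and it is where the delicate part of the three-color argument is concentrated; everything else in the proof is either an elementary counting step or a direct invocation of the main container theorem.
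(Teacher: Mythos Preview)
Your dense regime is exactly what the paper does, and your sparse cutoff (trivial bound $3^{|A|}\le 2^n$) is fine, though the paper absorbs it into a single lemma covering all of $|A|\le (1-3^{-3})n$.

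The genuine gap is step (ii). The assertion that every rainbow sum-free $3$-coloring of $A$ in the intermediate range has a color class of size $o(n)$ is simply false. Take $n$ even and
\[
A=O\cup E',\qquad E'=\{\,n/2,\ n/2+2,\ \ldots,\ n\,\},
\]
so $|A|\approx 3n/4$, squarely in your intermediate regime. Color every odd element with color $1$ and color $E'$ arbitrarily with colors $2$ and $3$. Any Schur triple $a+b=c$ has either zero or two odd entries; in the first case all three lie in the sum-free set $E'$ (impossible), and in the second case two entries share color $1$. So this coloring is rainbow sum-free, yet all three color classes have size $\Theta(n)$. Hence your count in (iii), which presupposes (ii), does not bound all colorings of $A$.

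The paper avoids this by never arguing at the level of individual colorings. Instead it works at the template level: for each container $P$ one passes (via the arithmetic removal lemma) to a rainbow-sum-free template $P'$, sets $X_i=\{a:|P'(a)|=i\}$ and $T=X_2\cup X_3$, and observes that $(T+T)\cap X_3=\varnothing$ and $(X_3+T)\cap T=\varnothing$. Letting $m=\max X_3$, these force $|T|\le n-\lceil(m-1)/2\rceil$ and $x_3\le m-\lceil(m-1)/2\rceil$, hence $|T|+x_3\le n+1$. Writing
\[
x_2+x_3\log 3=\frac{\log 3}{2}\Bigl(|T|+x_3-\bigl(1-\tfrac{2}{\log 3}\bigr)x_2\Bigr)
\]
and splitting according to whether $x_2$ is large or small, one gets $\log g(P,A)\le (1-\delta_1)n$ directly, with $\delta_1$ depending only on $\varepsilon=1-|A|/n$. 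Summing over the $2^{o(n)}$ containers then gives $g(A,3)\le 2^{(1-\delta_1)n}$ for every $A$ with $|A|\le(1-3^{-3})n$. So the ``ad-hoc stability'' is an optimization over palette-size profiles of containers, not a structural statement about colorings; that is what you would need to replace (ii) with.
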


Just as for the Erd\H{o}s-Rothschild extension for Gallai colorings~\cite{balogh2018typical}, we may not expect that the same phenomena persist for $r\geq 4$.
For $n\in \mathbb{N}$, we define 
\[
O:=\{1, 3, 5, \cdots, 2\lceil n/2\rceil-1\}, \quad\text{ and }\quad  I_0:=[\lfloor n/2\rfloor+1, n].
\]
We prove the following stability theorem.
\begin{thm}\label{thm:sta}
For every positive integer $r\geq 4$, we have 
\[
g(n, r)= r^{n/2 + o(n)}.
\]
Moreover, for every $\eps>0$, there exist $\delta , n_0>0$ such that for all integers $n\geq n_0$ the following holds.
Let $A$ be a subset of $[n]$ with $g(A, r)\geq  r^{n/2 - \delta n}.$ Then
\smallskip
\begin{compactenum}
\item[\rm (i)] for $r\geq 5$, we have that either $|A \bigtriangleup O|\leq \eps n$, or $|A	\bigtriangleup I_0|\leq \eps n$;\smallskip
\item[\rm (ii)] for $r=4$, we have that either $|A	\bigtriangleup [n]|\leq \eps n$, or $|A 	\bigtriangleup O|\leq \eps n$, or $|A\bigtriangleup I_0|\leq \eps n$.
\end{compactenum}
\end{thm}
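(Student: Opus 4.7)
The strategy is to bound $g(A,r)$ by splitting rainbow sum-free $r$-colorings $\chi$ of $A$ according to their ``color profile'' and handling each part with a different tool: direct counting for colorings that essentially use two colors, and the hypergraph container method (together with stability for sum-free sets) for colorings that use three or more colors. The matching lower bound $g(n,r)\geq r^{n/2+O(1)}$ is immediate, since any restricted sum-free set of size $\lfloor n/2\rfloor+1$ (such as $O$ or $I_0$) contains no Schur triple, so all $r^{|A|}$ of its $r$-colorings are rainbow sum-free; for $r=4$ an alternative lower bound of the same order comes from the $\binom{4}{2}(2^n-2)$ two-colorings of $[n]$, since $2^n=4^{n/2}$.

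Call a color $c$ \emph{heavy} for $\chi$ if $|\chi^{-1}(c)|\geq\beta n$ for a small parameter $\beta=\beta(\eps,\delta,r)$, and \emph{light} otherwise. The number of colorings with at most two heavy colors is bounded by $\binom{r}{2}\cdot 2^{|A|}\cdot r^{(r-2)\beta n}$: choose the heavy pair, 2-color $A$ with them, then recolor the at most $(r-2)\beta n$ light elements arbitrarily. Since $\log_r 2 <1/2$ for $r\geq 5$, this is $o(r^{n/2})$ and contributes nothing toward $r^{n/2-\delta n}$. For $r=4$, however, $\log_4 2=1/2$, so the bound can reach $r^{n/2-\delta n}$ only when $|A|\geq n-O((\delta+\beta)n)$, yielding $|A\bigtriangleup [n]|\leq\eps n$ and accounting for the $[n]$-option in (ii).

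For colorings with at least three heavy colors, the rainbow sum-free condition forces every triple of heavy classes $A_i,A_j,A_k$ to be \emph{cross-Schur-free}: no restricted Schur triple $(a,b,c)$ has one element in each of $A_i,A_j,A_k$. The plan is then to apply the Balogh-Morris-Samotij / Saxton-Thomason container method to the 3-uniform hypergraph of restricted Schur triples on $[n]$, obtaining a family $\mathcal{C}$ with $|\mathcal{C}|=2^{o(n)}$, each container of size at most $(1/2+o(1))n$ and, via the stability version, close to one of a short list of extremal sum-free sets, in particular $O$-like or $I_0$-like. The key step is to associate each such $\chi$ to a container capturing all but $o(n)$ elements of $A$, bounding the total number of such colorings by $|\mathcal{C}|\cdot r^{(1/2+o(1))n+O(\beta n)}=r^{n/2+o(n)}$. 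Summing the two regimes yields $g(n,r)\leq r^{n/2+o(n)}$, and the stability of the containers forces $A$ itself to be close to $O$ or $I_0$, giving (i) for $r\geq 5$ and the remaining two options in (ii) for $r=4$.

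The main obstacle is precisely the association step for the three-heavy-color regime: the cross-Schur-free condition on three classes does \emph{not} by itself force their union to be sum-free, since one can take two tiny initial segments of total size $2\beta n$ together with a huge final segment of size $\approx(1-3\beta)n$ to obtain three pairwise cross-Schur-free classes covering nearly all of $[n]$. Hence the argument cannot simply declare ``three heavy classes give a sum-free container''; rather, it must control the total number of rainbow sum-free colorings using $\geq 3$ colors, presumably by iterating over pairs of heavy colors, using Theorem~\ref{thm: three} (or the counting that underlies it) as a base case on the complement, and carefully accounting for dependencies between different heavy triples. A secondary subtlety is narrowing the container list to exactly $O$ and $I_0$, ruling out residual extremal candidates such as the odd-$n$ variants $\{1,3,\dots,n\}$ or $\{(n-1)/2,\dots,n-1\}$, which requires an ad hoc elimination tailored to each structure.
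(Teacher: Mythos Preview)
Your proposal is not a complete proof: you explicitly identify, and do not resolve, the main obstacle in your three-heavy-color regime. The counterexample you give (two tiny classes plus one huge class, pairwise cross-Schur-free, covering almost all of $[n]$) is correct and shows that ``$\geq 3$ heavy color classes'' does not by itself pin down $A$ near a sum-free set. So as written this is an outline with a gap, not a proof.

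The paper's argument avoids this difficulty by organizing the container step differently. Instead of classifying individual colorings by their heavy colors, it applies the container theorem to the $3$-uniform hypergraph on vertex set $[n]\times[r]$ whose edges are rainbow restricted Schur triples; the resulting containers are \emph{$r$-templates} $P:[n]\to 2^{[r]}$, i.e.\ palette assignments. One then analyzes a good template via $X_i=\{a:|P(a)|=i\}$ (for $i=1,2$) and $X_3=\{a:|P(a)|\geq 3\}$. The point is that if $P'$ is the rainbow-sum-free part of $P$ obtained from Green's removal lemma, then $X_3$ is genuinely sum-free and $(T+T)\cap X_3=\varnothing$ where $T=X_2\cup X_3$; this gives $|X_3|\leq n/2+1$ and, with a short case analysis on the largest element of $X_3$, the inequality $|T|+|X_3|\leq n+O(1)$. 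Since $g(P,A)\leq 2^{|X_2|}r^{|X_3|}=r^{(|T|+|X_3|)/2}\cdot r^{-(1-2/\log r)|X_2|/2}$, the bound $g(A,r)\leq r^{n/2+o(n)}$ follows immediately for $r\geq 5$ (Lemma~\ref{lem:mid}), and a more delicate version of the same analysis handles $r=4$ in the range $(1/2+\eps)n\leq|A|\leq(1-\eps)n$ (Lemma~\ref{lem:mid4}). The stability conclusion then comes not from container stability on sum-free sets, but from a direct argument: if $|A|\leq(1/2+\eps')n$ and $A$ has few Schur triples (otherwise Lemma~\ref{lem:manyschur} gives a saving), the removal lemma plus Lemma~\ref{Stabilitylem3} place almost all of $A$ inside $O$ or an interval near $I_0$.

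Two smaller points. First, your worry about ``residual extremal candidates'' is not a real issue: the sets $\{(n-1)/2,\ldots,n-1\}$ etc.\ already satisfy $|A\bigtriangleup I_0|\leq\eps n$, so no extra elimination is needed. Second, the idea of salvaging your heavy-color decomposition by ``iterating over pairs of heavy colors, using Theorem~\ref{thm: three} as a base case'' is vague and not obviously workable; the template/palette framework is what makes the container step clean, and you should adopt it.
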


The behavior of the exact extremal configurations not only depends on the number of colors, but also depends on the parity of $n$.
For even $n$, we define
\[
I_1=\left[\frac{n}{2}-1, n\right], \qquad
I_2=\left[\frac{n}{2}, n\right].
\]
Observe that $I_1$ contains exactly two restricted Schur triples $\{n/2-1, n/2, n-1\}$, $\{n/2-1, n/2+ 1, n\}$, and it is not hard to compute that $g(I_1, r)=r^{n/2}\left(3 - 2/r\right)^2$. On the other hand, the set $I_2$ is a restricted sum-free set and therefore $g(I_2, r)=r^{|I_2|}=r^{n/2 + 1}$.
For odd $n$, we define
\[
I_3=\left[\frac{n-1}{2}, n\right].
\]
Again, the set $I_3$ contains exactly one restricted Schur triple $\{\frac{n-1}{2}, \frac{n-1}{2}+1, n\}$, and one can show that $g(I_3, r)=r^{\lceil n/2\rceil}\left(3 - 2/r\right)$, which is already greater than the number of colorings for any restricted sum-free set.
When a set $A$ is of size at least the size of the maximum restricted sum-free sets and not one of the above three sets, we believe that the restrictions from the triples would more than counteract the extra possibilities offered by the additional vertices. Therefore, we make the following conjecture.
\begin{conj}\label{conj}
Let $n, r$ be positive integers  and $r\geq 4$.\smallskip
\begin{compactenum}[\rm (i)]
\item If $n$ is even and $r\leq 7$, then $g(n, r)=r^{n/2}\left(3 - 2/r\right)^2$, and $I_1$ is the unique rainbow $r$-extremal set.\smallskip
\item If $n$ is even and $r\geq 8$, then $g(n, r)=r^{n/2 + 1}$, and $I_2$ is the unique rainbow $r$-extremal set.\smallskip
\item If $n$ is odd and $r=4$, then $g(n, r)=g([n], r)$, and $[n]$ is the unique rainbow $r$-extremal set.\smallskip
\item If $n$ is odd and $r\geq 5$, then $g(n, r)=r^{\lceil n/2\rceil}\left(3 - 2/r\right)$, and $I_3$ is the unique rainbow $r$-extremal set.
\end{compactenum}
\end{conj}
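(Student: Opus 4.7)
The plan is to combine the stability theorem (Theorem~\ref{thm:sta}) with a tight local analysis around each candidate configuration it identifies. Theorem~\ref{thm:sta} guarantees that any rainbow $r$-extremal set $A$ satisfies $|A\bigtriangleup O|\le \eps n$, $|A\bigtriangleup I_0|\le \eps n$, or (only when $r=4$) $|A\bigtriangleup [n]|\le \eps n$. It therefore suffices to identify the exact maximizer of $g(A,r)$ within each local regime and then compare the resulting local maxima; the comparisons should reproduce the thresholds in $r$ and the parity of $n$ as conjectured.

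\textbf{The two easier regimes.} For the $[n]$-regime (relevant only when $r=4$), Theorem~\ref{thm: hrange} gives $g(A,4)\le 6\cdot 2^{|A|}+o(2^n)$, while \eqref{eq: int} shows $g([n],4)=6\cdot 2^n-8$, so $[n]$ is the unique local maximizer in this regime. A direct computation then yields $g([n],4)=6\cdot 2^n-8>5\cdot 2^n=g(I_3,4)$ for odd $n$, giving case~(iii); for even $n$ the value $g(I_1,4)=6.25\cdot 2^n$ already exceeds $g([n],4)$, ruling out $[n]$ as a global maximum in case~(i). For the $O$-regime, since $O$ is restricted sum-free we have $g(O,r)=r^{|O|}$; any perturbation either removes an odd element (losing a factor $r$) or inserts an element that completes at least one restricted Schur triple within $A$ (multiplying the count by at most $r\cdot (3r-2)/r^3=(3r-2)/r^2<1$). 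A careful bookkeeping over a greedy disjoint packing of such triples then confines $g(A,r)$ in this regime well below the claimed extremal values, eliminating the $O$-regime from contention.

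\textbf{The main obstacle: the $I_0$-regime.} Write $A=(I_0\setminus C)\cup B$ with $B\subseteq [1,\lfloor n/2\rfloor]$ and $|B|,|C|\le \eps n$. For each $x\in B$, pairs $(y,x+y)$ with $y\in I_0\setminus C$ and $x+y\in I_0$ produce many restricted Schur triples in $A$. The key technical step is to prove a bound of the shape
\[
g(A,r)\;\le\;r^{|I_0|+|B|-|C|}\cdot\prod_{x\in B}\Big(\tfrac{3r-2}{r^2}\Big)^{t(x)},
\]
where $t(x)\ge 1$ is a combinatorial lower bound, obtained from a greedy disjoint triple-packing, on the number of triples through $x$ that can be accounted for independently. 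Granting this, the right-hand side is maximized by taking $C=\emptyset$ and $B\in\{\emptyset,\{n/2-1\},\{(n-1)/2\}\}$ according to the parity of $n$, since any other single location for $x$ lies in strictly more triples, and any $|B|\ge 2$ compounds the factor $(3r-2)/r^2<1$ more than once while only adding $r^{|B|}$ to the prefactor. The hard part is establishing the displayed bound in the presence of triples sharing a common vertex, where the naive multiplicative estimate fails. I would attempt this either via a container-style encoding of rainbow sum-free colorings refined to small perturbations of $I_0$, or via a direct entropy argument that processes each $x\in B$ along an explicitly chosen disjoint triple packing and then uses inclusion-exclusion to absorb the shared-vertex corrections.

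\textbf{Final comparison.} Once the local maxima are in hand, the split between (i) and (ii) reduces to $g(I_1,r)/g(I_2,r)=(3-2/r)^2/r\gtrless 1$, equivalently $(3r-2)^2\gtrless r^3$, which flips precisely at $r=8$. For odd $n$ and $r\ge 5$, the $[n]$-regime is excluded by Theorem~\ref{thm:sta}(i), and $g(I_3,r)=r^{\lceil n/2\rceil}(3-2/r)$ dominates $g(O,r)=r^{\lceil n/2\rceil}$, yielding (iv); case (iii) is handled by the $[n]$-regime analysis above. Uniqueness in each case follows from strict inequality in the local-maximum lemma.
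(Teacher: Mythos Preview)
The statement you are attempting to prove is a \emph{conjecture} in the paper, not a theorem: the paper proves only the $r\ge 8$ cases (Theorem~\ref{thm: r8}, which yields parts~(ii) and the $r\ge 8$ slice of~(iv)), and explicitly leaves parts~(i), (iii), and the $5\le r\le 7$ slice of~(iv) open in the concluding remarks. For the $r\ge 8$ cases, your plan coincides with the paper's proof: reduce via Theorem~\ref{thm:sta} to the $O$- and $I_0$-regimes, then use a matching bound in the link graph (the paper's Proposition~\ref{prop:larmat} and Lemmas~\ref{lem:staodd}--\ref{lem:staint}) to finish. The crucial inequality there is $(3r-2)^2<r^3$, which holds precisely for $r\ge 8$.

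For $r\le 7$ your argument has a genuine gap, and it is the same gap that prevents the paper from resolving these cases. In your $I_0$-regime bound, each element $x\in B$ contributes a net multiplicative factor of $r\cdot\big((3r-2)/r^2\big)^{t(x)}$ to the right-hand side (one extra factor of $r$ from $|A|$ growing by one, times the triple penalty). For this factor to be $\le 1$ you need $t(x)\ge \log r/\log\!\big(r^2/(3r-2)\big)$, which is about $2.95$ for $r=4$ and about $2.05$ for $r=7$; but the elements $x=\lfloor n/2\rfloor$ and $x=\lfloor n/2\rfloor-1$ have $t(x)\in\{0,1,2\}$, so your claim that the bound is maximized at $|B|\le 1$ is unjustified---adding a second nearby element \emph{increases} your upper bound. (Relatedly, in the $O$-regime your factor is miscomputed: inserting one even element that completes a single triple multiplies the upper bound by $(3r-2)/r>1$, not $(3r-2)/r^2$; that regime is still tractable only because every even element actually lies in linearly many disjoint triples with $O$, cf.\ Lemma~\ref{lem:staodd}, not because of a one-triple estimate.) In short, the ``greedy disjoint triple-packing'' bound you propose is exactly the paper's Proposition~\ref{prop:larmat}, and its failure for $r\le 7$ is why those cases remain conjectural.
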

Our fourth main result verifies Conjecture~\ref{conj} for $r\geq 8$ and $n$ sufficiently large.
\begin{thm}\label{thm: r8}
For an integer $r\geq 8$, there exists $n_0=n_0(r)$ such that for all $n > n_0$ the following holds. Let $A$ be a subset of $[n]$ with $|A|\geq\lceil n/2 \rceil +1$. \smallskip
\begin{compactenum}[\rm (i)]
\item If $n$ is even, then $g(A, r)\leq r^{\lceil n/2 \rceil +1}$, and the equality holds if and only if $A=I_2$.\smallskip
\item If $n$ is odd, then $g(A, r)\leq r^{\lceil n/2\rceil}\left(3 - 2/r\right)$, and the equality holds if and only if $A=I_3$.
\end{compactenum}
\end{thm}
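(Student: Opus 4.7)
The proof is organized in two stages: a stability-based reduction to sets near $O$ or $I_0$, followed by a precise local analysis around the candidate extremal sets $I_2$ and $I_3$.

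I first apply Theorem~\ref{thm:sta}(i) (applicable because $r \geq 8 \geq 5$) with $\varepsilon$ sufficiently small to obtain $\delta > 0$: either $g(A,r) < r^{n/2 - \delta n}$, in which case $g(A,r)$ is already much smaller than both $r^{n/2+1}$ and $r^{\lceil n/2\rceil}(3 - 2/r)$, or $|A \bigtriangleup O| \leq \varepsilon n$, or $|A \bigtriangleup I_0| \leq \varepsilon n$. Since $|O| = |I_0| = \lceil n/2 \rceil < |A|$, the set $A$ must include at least one element outside the relevant ``base'' set in the latter two alternatives.

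For the case $|A \bigtriangleup I_0| \leq \varepsilon n$, I write $A = (I_0 \setminus X) \cup Y$ with $X \subseteq I_0$, $\emptyset \neq Y \subseteq [1, \lfloor n/2 \rfloor]$, and $|Y| \geq |X| + 1$. The targets $I_2$ and $I_3$ correspond exactly to $X = \emptyset$ together with $Y = \{n/2\}$ ($n$ even) or $Y = \{(n-1)/2\}$ ($n$ odd): for $I_2$ the set is restricted sum-free, so every coloring is valid, giving $r^{n/2+1}$; for $I_3$ there is exactly one triple $((n-1)/2,(n+1)/2,n)$, and direct counting gives $r^{\lceil n/2\rceil}(3-2/r)$. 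For any smaller $y \in Y$, the set $A$ contains $\Theta(n)$ restricted Schur triples of the form $(y, z, y+z)$ all anchored at $y$, forming a path-like constraint structure on $I_0 \setminus X$. A transfer-matrix computation (fixing the color of $y$ and tracking the colors along the ``path'') bounds the number of valid colorings by $r \cdot (1 + \sqrt{r-1})^{\Theta(n)}$, which is far below $r^{n/2+1}$ since $(1 + \sqrt{r-1})^2 < r$ for all $r \geq 3$. Configurations with $|X| \geq 1$ or $|Y| \geq 2$ introduce additional triple constraints; the closest competitor is $I_1 = [n/2-1, n]$ (for even $n$), whose two edge-disjoint triples give $g(I_1, r) = r^{n/2}(3 - 2/r)^2$, and the key algebraic inequality $(3r-2)^2 < r^3$ --- which holds precisely for $r \geq 8$ --- yields the desired strict domination. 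Any other $(X,Y)$ is handled by bookkeeping a factor of $r$ lost per element removed against the saving forced on each added triple.

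The case $|A \bigtriangleup O| \leq \varepsilon n$ is handled analogously: any $y \in Y$ (necessarily even) participates in $\Theta(n-y)$ restricted Schur triples $(x, y, x+y)$ with $x$ odd, producing the same path-like constraint structure on $O \setminus X$, so the transfer-matrix bound again gives $g(A, r) \ll r^{n/2+1}$ for $r \geq 8$. Combining the three cases and tracking strict inequalities throughout yields both the bound and the uniqueness assertion. The principal technical obstacle is the local analysis when small elements are added to $I_0$ (or even elements to $O$): overlapping triples do not decouple, and one must invoke the transfer-matrix bound to control them. The appearance of the threshold $r = 8$ in the hypothesis is explained by the inequality $(3r-2)^2 < r^3$, which fails for $r \leq 7$ and corresponds exactly to the phase transition in Conjecture~\ref{conj} between $I_1$ and $I_2$ being rainbow $r$-extremal for even $n$.
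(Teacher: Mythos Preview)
Your stability reduction via Theorem~\ref{thm:sta} matches the paper's opening move, but the local analysis that follows has a concrete error and a genuine gap.

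First, the inequality $(1+\sqrt{r-1})^2 < r$ is false for every $r\ge 2$: expanding gives $(1+\sqrt{r-1})^2 = r + 2\sqrt{r-1} > r$. What the transfer-matrix bound actually yields (for $A = I_0\cup\{y\}$ with $y$ small) is roughly $r\cdot(1+\sqrt{r-1})^{n/2}$, and the correct comparison with $r^{n/2+1}$ is $1+\sqrt{r-1} < r$, which does hold for $r\ge 3$. So the idea can be salvaged for that particular sub-case, but as written the key inequality is simply wrong.

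Second, and more seriously, the transfer-matrix argument only bites when the chosen $y$ produces a \emph{long} path in its link graph on $I_0$, i.e.\ when $\min A$ is far below $n/2$. The delicate case is precisely the opposite one: when $A$ looks like $[m,n]$ with $m = \lfloor n/2\rfloor - (c-1)$ and $c$ small (possibly bounded). Here the link graph of $m$ has only $O(c)$ edges, the transfer matrix gives nothing useful, and your ``bookkeeping a factor of $r$ lost per element removed against the saving forced on each added triple'' is exactly where a proof is needed. You recognise $I_1$ as the closest competitor and invoke $(3r-2)^2<r^3$, but you do not show how this inequality controls \emph{all} sets of size $\lceil n/2\rceil + c$ with $c>1$.

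The paper's approach replaces the transfer matrix by a matching bound: for any $t\in A$, if $L_t(A)$ contains a matching of size $k$, then $g(A,r)\le r^{|A|-2k}(3r-2)^k$. The point is that one can always locate a $t$ (namely the minimum element $m$) with $k(m,A)\ge 2(c-1)$ when $n$ is even (and $\ge 2(c-1)+1$ when $n$ is odd), via a short case analysis on the position of $m$; then
\[
g(A,r)\;\le\; r^{\lceil n/2\rceil + c}\left(\frac{3r-2}{r^2}\right)^{2(c-1)}
= r^{\lceil n/2\rceil +1}\left(\frac{(3r-2)^2}{r^3}\right)^{c-1} < r^{\lceil n/2\rceil +1},
\]
and this is exactly where the hypothesis $r\ge 8$ enters. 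Your sketch identifies the right threshold inequality but never connects it to a matching (or any other counting device) that covers the full range of $c$ and of $m$.
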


The paper is organized as follows. In the next section, we list some structural results on sum-free sets, which are essential for the proof, and introduce the multi-color container theorem. In Section~3, we prove Theorem~\ref{thm: hrange}. In Section 4, we prove the stability theorem, Theorem~\ref{thm:sta}, and determine $g(n,3)$ for $n$ sufficiently large. In Section 5, we determine $g(n,r)$ for $r\geq8$, and describe the corresponding extremal configurations. 
We close the paper with some concluding remarks in Section 6.
Throughout the paper, all logarithms have base 2. 

\section{Notation and preliminaries}
\subsection{Basic properties of restricted sum-free sets}
We use the following result of Huczynska~\cite{sum-counting} on the minimum number of additive triples
among all sets of a given size. 
 \begin{thm}\cite{sum-counting}\label{thm:staden}
Let $A$ be a subset of $[n]$ with $|A|=k > \lceil n/2 \rceil$. Then the number of Schur triples in $A$ is at least
 \[
 (k - \lceil n/2 \rceil)(k - \lfloor n/2 \rfloor),
 \]
and the unique set for any such given $k$ that attains this bound is $[n - k + 1, n]$.
 \end{thm}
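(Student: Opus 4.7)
The plan is to reduce to the extremal set $A^\ast := [n-m+1, n]$, where $m := |A|$, by a compression argument, and then count Schur triples in $A^\ast$ directly.

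The direct count at $A^\ast$ is a short exercise. A Schur triple $\{a, b, c\}$ with $a \le b$ and $a + b = c$ inside $A^\ast$ corresponds to a pair $(a, b)$ with $n - m + 1 \le a \le b$ and $a + b \le n$. Substituting $a' := a - (n - m)$ and $b' := b - (n - m)$, this is equivalent to counting positive-integer pairs with $a' \le b'$ and $a' + b' \le 2m - n$; splitting by the parity of $n$, this number is readily seen to equal $(m - \lceil n/2 \rceil)(m - \lfloor n/2 \rfloor)$. Thus the claimed bound is tight, attained by $A^\ast$.

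For the compression, if $A \ne A^\ast$ pick $x := \min(A \setminus A^\ast) \le n - m$ and $y := \max(A^\ast \setminus A) \ge n - m + 1$, so $x < y$, and define $A' := (A \setminus \{x\}) \cup \{y\}$. Iterating this swap transforms $A$ into $A^\ast$ in finitely many steps, so it suffices to prove $T(A') \le T(A)$, where $T$ counts Schur triples. Schur triples that avoid $\{x, y\}$ are shared by $A$ and $A'$, so the inequality reduces to comparing Schur triples through $y$ in $A'$ with those through $x$ in $A$. The natural attempt is an injection $\{y, b, y+b\} \mapsto \{x, b, x+b\}$ when $y$ is the smaller summand, with analogous shifts by $y - x$ for the remaining types (including the type in which $y$ is the sum, which maps to a triple having $x$ as the sum).

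The main obstacle is verifying this injection. The shift $\{y, b, y+b\} \mapsto \{x, b, x+b\}$ requires $x + b \in A$ whenever $y + b \in A'$, which can fail when $x + b$ lands in the small range $[x+1, n-m]$ where $A$ is only partially controlled by the choice of $x$ and $y$. Handling this requires exploiting the minimality of $x$ and maximality of $y$ to pin down $A \cap [x+1, y-1]$, and absorbing the deficit cases via Schur triples of the form $\{a, b, x\}$ with $a + b = x$ and $a, b \in A$. This bookkeeping is the crux of the argument; once $T(A') \le T(A)$ is established, uniqueness of the minimizer follows by verifying that at least one swap in the iteration is strict whenever $A \ne A^\ast$, so $T(A) > T(A^\ast)$ in that case.
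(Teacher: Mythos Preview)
The paper does not prove this theorem; it is quoted from Staden's work \cite{staden2017note} and used as a black box, so there is no proof in the paper to compare against. I will therefore just assess your proposal on its own.

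Your compression step is false, not merely incomplete. The single swap $A \to A'$ can \emph{increase} the number of Schur triples. Take $n = 100$, $m = 51$, so $A^\ast = [50,100]$, and let
\[
A = \{1,3,5,\ldots,99\} \cup \{100\}.
\]
Then $x = \min(A \setminus A^\ast) = 1$ and $y = \max(A^\ast \setminus A) = 98$, so $A' = (A \setminus \{1\}) \cup \{98\}$. Every Schur triple in $A$ has even sum, hence sum $100$, and there are exactly $25$ such triples $\{a,100-a,100\}$ with $a$ odd in $[1,49]$. In $A'$ the Schur triples are $\{a,98-a,98\}$ and $\{a,100-a,100\}$ with $a$ odd in $[3,49]$, giving $T(A') = 24 + 24 = 48 > 25 = T(A)$. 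So $T(A') > T(A)$ and the iteration cannot reach $T(A^\ast) = 1$ monotonically.

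The failure is exactly at the point you flagged but did not resolve. Your ``deficit absorption'' via triples $\{a,b,x\}$ with $a+b=x$ and $a,b \in A$ is vacuous: from $x = \min(A \setminus A^\ast)$ and $A^\ast = [n-m+1,n]$ one gets $x = \min A$, so there are no elements of $A$ below $x$ and hence no Schur triples with $x$ as the sum. In the example, $y=98$ is the sum in $24$ triples of $A'$, and there is nothing on the $A$ side to match them. This is not a bookkeeping issue; the inequality $T(A') \le T(A)$ is simply not true for this choice of swap, and the minimum-Schur-triples problem was known to resist elementary compressions of this type. Staden's proof proceeds by a different route.
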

  For a set $A \subseteq [n]$, we write $\mathcal{S}(A)$ for the set of all restricted Schur triples in $A$, and let $s(A)=|\mathcal{S}(A)|$. For an integer $t\in A$, denote by $\mathcal{S}(t, A)$ the set of all triples in $\mathcal{S}(A)$ containing $t$, and let $s(t, A)=|\mathcal{S}(t, A)|$. Then from Theorem~\ref{thm:staden}, it is not hard to  obtain the following proposition.

\begin{prop}\label{numberofsum}
For every integer $n\ge 3$, the number of restricted 
 Schur triples in $[n]$ satisfies that \[
s([n])= \begin{cases}
\frac{n^2-2n}{4}&\text{if } n \text{ is even};\\
\frac{n^2-2n+1}{4}&\text{otherwise}.
\end{cases}
\]
\end{prop}

For convenience, we will use the following definitions in sections 4 and 5.  For a subset $A \subseteq [n]$ and $t\in A$, let the \textit{link graph} $L_t(A)$ be the simple graph defined on the vertex set $A\setminus\{t\}$, such that $xy\in E(L_t(A))$ if and only if $\{t, x, y\}\in \mathcal{S}(t, A)$.  
Let $k(t, A)$ be the size of a maximum matching of $L_t(A)$. Observe that $\Delta(L_t(A))\leq2$, and $|E(L_t(A))|=s(t,A)$.
Therefore we have 
\begin{equation}\label{ineq:mat}
k(t, A)\geq |E(L_t(A))|/2=s(t, A)/2.
\end{equation}


%
%

\subsection{Structural properties of sum-free sets}

We will use standard definitions and notation in additive combinatorics as given in \cite{additive}. Given $A,B\subseteq \mathbb{Z}$, let
\[
A+B:=\{a+b: a\in A,b\in B\},\quad\text{ and }\quad A-B:=\{a-b: a\in A,b\in B\}.
\]
When $B=\{x\}$, we simply write $A+x$ and $A-x$. 

The following lemma is known as Green's removal lemma, which was first proved by Green \cite{green2005re-lemma}, and was later generalized to non-abelian groups by Kr\'al and Vena \cite{KSV2009}. 
\begin{lemma}{\rm (\cite[Corollary 1.6]{green2005re-lemma}).}\label{Stabilitylem2}
Suppose that $A \subseteq [n]$ is a set containing $o(n^2)$ Schur triples. Then there exist $B, C\subseteq [n]$ such that  $A=B\cup C$ where $B$ is sum-free and $|C|=o(n)$.
\end{lemma}

Note that the notation here is convenient but offers scope for confusion. What we mean is that there is a function $\delta=\delta(\eps)$ such that $\delta \to 0$ as $\eps \to 0$, and which has the following property. If $A$ contains at most $\delta n^2$ Schur triples then we may remove $\varepsilon n$ elements from $A$ so as to leave a set which is sum-free.

We also require a very strong stability theorem for sum-free sets proved by Deshouillers, Freiman, S\'os, and Temkin \cite{Deshouillers1999sum-free}.

\begin{lemma}{\rm (\cite[Theorem 1.1]{Deshouillers1999sum-free}).} \label{Stabilitylem3}
Every sum-free set $S$ in $[n]$ satisfies at least one of the following conditions:\smallskip
\begin{compactenum}[\rm (i)]
  \item $|S|\leq \lceil 2n/5 \rceil$;
  \item $S$ consists of odd numbers;
  \item $|S| \leq \min(S)$.
\end{compactenum}
\end{lemma}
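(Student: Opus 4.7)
The plan is to prove the trichotomy by contradiction: assume $S \subseteq [n]$ is sum-free with $|S| > 2n/5$ and $\min(S) < |S|$, and deduce that $S$ must then consist entirely of odd numbers.

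Let $a = \min(S)$ and $b = \max(S)$. Sum-freeness gives three basic disjointnesses inside $[1, n]$: for any distinct elements of $S$, the relations $a + (s - a) = s$, $s + a \in S$, and $s + (b - s) = b$ each force a translated or reflected copy of $S$ to avoid $S$. Consequently $S$, $(S - a) \cap [1,n]$, and $(b - S) \cap [1,n]$ are pairwise disjoint subsets of $[1, n]$, yielding an inequality of roughly the shape $3|S| \leq n + a + (n - b) + O(1)$. I would first combine this with $|S| > 2n/5$ to force $a$ small relative to $|S|$ and $b$ close to $n$; otherwise $S$ sits inside a short top interval, and one falls directly into case (iii).

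The core step is a parity analysis. Split $S = S_o \sqcup S_e$ into its odd and even parts. Sum-freeness yields
\[
(S_o + S_o) \cap S_e = \emptyset,\quad (S_e + S_e) \cap S_e = \emptyset,\quad (S_o + S_e) \cap S_o = \emptyset.
\]
Assuming for contradiction that $S_e \neq \emptyset$, I would double-count ordered pairs $(o, e) \in S_o \times S_e$ with $o + e \leq n$, using the fact from the previous step that $S$ is spread across most of $[a, n]$ with $a < |S|$ and $|S| > 2n/5$. The density forces the restricted sumset $(S_o + S_e) \cap [1,n]$ to be large enough that it must meet $S_o$, contradicting sum-freeness; symmetrically, $S_o \neq \emptyset$, else density considerations collapse $S$ into a short top interval and give case (iii). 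Hence $S_e = \emptyset$, placing us in case (ii).

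The main obstacle is the tightness of the $2n/5$ threshold: natural constructions (e.g., suitable shifted arithmetic progressions modulo small moduli) sit right at this density, so the sumset-counting in the parity step must be sharp rather than lossy. I expect the quantitative mixed-parity estimate to be the delicate point, likely requiring a Freiman-type structural input on sum-free subsets of short intervals, or a careful inclusion-exclusion among the four translates $S$, $S + a$, $S - a$, and $b - S$ to extract the extra factor needed precisely at the threshold.
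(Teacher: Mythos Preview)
The paper does not supply a proof of this lemma; it is quoted as a black box from \cite{Deshouillers1999sum-free}. So there is no in-paper argument to compare your sketch against, and I evaluate your proposal on its own merits.

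There is a concrete error and a genuine gap. First, your claim that $S$, $(S-a)\cap[1,n]$, and $(b-S)\cap[1,n]$ are \emph{pairwise} disjoint is false in general: while $S\cap(S-a)=\emptyset$ and $S\cap(b-S)=\emptyset$ follow from sum-freeness, the disjointness of $S-a$ and $b-S$ would require $s+t\neq a+b$ for all $s,t\in S$, which sum-freeness does not give. For instance, $S=\{2,3,8,9\}\subseteq[10]$ is sum-free with $a=2$, $b=9$, yet $S-a=\{0,1,6,7\}=b-S$. So the inequality ``$3|S|\le n+a+(n-b)+O(1)$'' that you rely on to locate $a$ and $b$ is not available.

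Second, and more seriously, the parity double-count that is meant to eliminate the even elements is where the entire difficulty lives, and your outline does not supply it. You acknowledge this yourself by deferring to ``a Freiman-type structural input'' at the threshold. The threshold $2/5$ is genuinely sharp: the set $\{x\in[n]: x\equiv 1\text{ or }4\pmod 5\}$ is sum-free, has density approaching $2/5$, contains both parities, and satisfies $\min(S)=1<|S|$. Any correct argument must separate this example from density $2/5+\varepsilon$, and the kind of elementary translate-counting you sketch is too lossy to do so. The proof in the cited source proceeds instead by analysing the image of $S$ modulo small integers (in particular modulo $5$) and then using additive-combinatorial structure results on the resulting residue classes; it is not reducible to a short sumset inclusion-exclusion.
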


\subsection{Multi-color container theorem}
An important tool in our proof is the hypergraph container theorem.
We use the following version from~\cite{balogh2017container}. Let $\mathcal{H}$ be a $k$-uniform hypergraph with average degree \textit{d}. The \textit{co-degree} of a set of vertices $X\subseteq V(\mathcal{H})$ is the number of edges containing $X$, that is,
\[
d(X)=|\{e\in E(\mathcal{H})\ |\ X\subseteq e\}|.
\]
For every integer $2 \leq j\leq k$, the $j$-th maximum co-degree of $\mathcal{H}$ is
\[
\Delta_j(\mathcal{H})=\max\{d(X)\ |\ X\subseteq V(\mathcal{H}),\ |X|=j\}.
\]
When the underlying hypergraph is clear, we simply write it as $\Delta_j$. For $0<\tau<1$, the \textit{co-degree function} $\Delta(\mathcal{H},\tau)$ is defined as
\[
\Delta(\mathcal{H},\tau)=2^{\binom{k}{2}-1}\sum_{j=2}^k2^{-\binom{j-1}{2}}\frac {\Delta_j}{d\tau^{j-1}}.
\]
In particular, when $k=3$,
\[
\Delta(\mathcal{H},\tau)=\frac {4\Delta_2}{d\tau}+\frac {2\Delta_3}{d\tau^{2}}.
\]

\begin{thm}{\rm (\cite[Theorem 3.1]{balogh2017container}).} \label{baloghcontainer}
Let $\mathcal{H}$ be a $k$-uniform hypergraph on vertex set $[N]$. Let $0<\varepsilon, \tau< 1/2$.
Suppose that $\tau<1/(200k!^2k)$ and $\Delta(\mathcal{H},\tau)\leq \varepsilon/(12k!)$. Then there exists $c=c(k)\leq 1000k!^3k$ and a collection of vertex subsets $\mathcal{C'}$ such that\smallskip
\begin{compactenum}[\rm (i)]
  \item every independent set in $\mathcal{H}$ is a subset of some $U\in \mathcal{C'}$;\smallskip
 \item for every $U\in \mathcal{C'}$, $|E(\mathcal{H}[U])|\leq \varepsilon\cdot |E(\mathcal{H})|$;\smallskip
  \item $\log|\mathcal{C'}|\leq cN\tau \log(1/\varepsilon)\log(1/\tau)$.
\end{compactenum}
\end{thm}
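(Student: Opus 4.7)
The plan is to combine the stability theorem (Theorem~\ref{thm:sta}) with careful combinatorial accounting. First I reduce to the case where $g(A,r)$ is close to the claimed bound: if $g(A,r)$ is already smaller than the target there is nothing to prove, and otherwise $g(A,r) \geq r^{\lceil n/2 \rceil} \geq r^{n/2 - \delta n}$ for any prescribed $\delta > 0$ once $n$ is large enough. Since $r \geq 8 \geq 5$, Theorem~\ref{thm:sta}(i) applied with a sufficiently small $\varepsilon$ then gives either $|A \bigtriangleup O| \leq \varepsilon n$ or $|A \bigtriangleup I_0| \leq \varepsilon n$.

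In the close-to-$O$ case, the hypothesis $|A| \geq \lceil n/2 \rceil + 1 > |O|$ forces $A$ to contain at least one even integer $e$, and since $A \cap O$ covers almost all of $O$, such an $e$ sits in $\Omega(n)$ restricted Schur triples whose other two members are odd elements of $A$. Each such triple cuts down the valid color assignments by a multiplicative factor bounded away from $1$, giving a bound of the shape $g(A,r) \leq r^{|A|} \lambda^{\Omega(n)}$ with $\lambda < 1$, which is far smaller than $r^{\lceil n/2 \rceil + 1}$ and a fortiori smaller than the odd-$n$ target.

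The main case is when $A$ is $\varepsilon n$-close to $I_0$. Write $A = C \sqcup B$ with $C = A \cap I_0$ and $B = A \setminus I_0 \subseteq [1, \lfloor n/2 \rfloor]$; the size constraint forces $|B| \geq 1 + |I_0 \setminus C|$. If $A$ is restricted sum-free then $g(A,r) = r^{|A|}$, and maximality forces $A = I_2$ for even $n$ (the equality case), while for odd $n$ no restricted sum-free set reaches size $\lceil n/2 \rceil + 1$, so this subcase is vacuous. Otherwise $A$ carries at least one restricted Schur triple, and the argument runs element by element: for each $x \in B$ with $x \leq \lfloor n/2 \rfloor - 1$, triples of the form $(x, y, x+y)$ with $y, x+y \in C$ are abundant, and once $c_x$ is fixed each such triple restricts the pair $(c_y, c_{x+y})$ to $3r-2$ of the $r^2$ possibilities. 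Tracking shared vertices carefully gives a product bound of the form $g(A,r) \leq r^{|A| - 2t}(3r-2)^t$ for a suitable count $t$ of essentially independent constraining triples. In the odd case $A = I_3$ realises exactly $t = 1$, giving equality; any other $A$ with $|A| \geq (n+3)/2$ has $t \geq 2$ or has more missing elements, hence strictly smaller $g$.

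The main obstacle, and the point at which the hypothesis $r \geq 8$ enters, is the bookkeeping that pushes $|A|$ beyond $\lceil n/2 \rceil + 1$. Each extra element of $B$ beyond the minimum creates at least two ``independent'' triples, and the algebraic inequality $(3r-2)^2 < r^3$, equivalently $(3 - 2/r)^2 < r$, is exactly what ensures that the factor $((3r-2)/r^2)^2$ from those two triples beats the extra factor of $r$ gained from the new element. This inequality holds for $r \geq 8$ and fails for $r \leq 7$ (where $I_1$ is the conjectured optimum), matching the bifurcation in Conjecture~\ref{conj}. Verifying strict inequality for all non-extremal configurations and the uniqueness of $I_2$ (resp.\ $I_3$) requires tracing equality through each of these estimates, particularly when several elements of $B$ interact with each other or when $A$ is missing several elements of $I_0$, which is where I expect the most delicate casework.
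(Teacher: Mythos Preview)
Your proposal addresses the wrong statement. The theorem quoted is Theorem~\ref{baloghcontainer}, the hypergraph container theorem cited from~\cite{balogh2017container}; this is an external tool that the paper imports without proof. Your write-up is instead a proof sketch for Theorem~\ref{thm: r8}, the classification of rainbow $r$-extremal sets for $r\geq 8$. As a proof of the stated container theorem it is simply irrelevant.

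If your intent was Theorem~\ref{thm: r8}, then your high-level plan matches the paper's: invoke Theorem~\ref{thm:sta} to reduce to the close-to-$O$ and close-to-$I_0$ cases, handle close-to-$O$ by locating an even element in many triples, and in the close-to-$I_0$ case exploit the inequality $(3r-2)^2<r^3$ (exactly the $r\geq 8$ threshold) to show extra elements cost more than they gain. The paper, however, does not run an element-by-element product bound as you suggest; it packages the counting through the link graph $L_t(A)$ and its maximum matching (Proposition~\ref{prop:larmat}), and then in Lemma~\ref{lem:staint} splits into four ranges for the minimum element $m$ of $A$ to guarantee a matching of size at least $2(c-1)$ (even $n$) or $2c-1$ (odd $n$). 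Your sketch leaves the ``tracking shared vertices carefully'' step vague, and in particular does not explain how to handle the delicate range where $m$ is close to $\lfloor n/2\rfloor$ and the available triples are scarce; the paper's case split (Cases~3 and~4 of Lemma~\ref{lem:staint}) is precisely where the parity distinction and the tight constants emerge.
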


 A key concept in applying container theory to such coloring problems is the notion of \textit{template}, which was first introduced in \cite{FOSU}, although the concept had already appeared in~\cite{ST} under the name of `2-colored multigraphs' and later in~\cite{balogh2016further}, simply referred to as `containers'.
\begin{defi}[Template\ and\ palette]
An \textit{$r$-template} of order $n$ is a function $P: [n] \to 2^{[r]}$, associating with each element $x \in [n] $ a list of colors $P(x) \subseteq [r]$. We refer to this set $P(x)$ as the \textit{palette} available at $x$.

For a set $A \subseteq [n]$, any $r$-coloring of $A$ can be considered as an $r$-template of order $n$, with only one color allowed at each element in $A$, and no color allowed for elements not belonging to $A$.
\end{defi}

\begin{defi}[Subtemplate]
Let $P_1$, $P_2$ be two $r$-templates of order $n$. We say that $P_1$ is a \textit{subtemplate} of $P_2$ (written as $P_1 \subseteq P_2$) if $P_1(x) \subseteq P_2(x)$  for each element $x \in [n]$.
\end{defi}

For an $r$-template $P$ of order $n$, we say that $P$ is a \textit{rainbow restricted sum $r$-template} if there exists a restricted sum $S=\{a,b,c\}$ in $[n]$ such that $|P(x)|=1$ for $x\in S$, $P(x)=\varnothing$ for  $x\in [n]\setminus S$, and $P(a), P(b), P(c)$ are pairwise distinct.
Moreover, we write $RS(P)$ for the number of subtemplates of $P$ that are rainbow restricted sum. We say that $P$ is a \textit{rainbow restricted sum-free} $r$-template if $RS(P)=0$.

Using Theorem \ref{baloghcontainer}, we obtain the following.
\begin{thm} \label{container}
For every integer $r\geq 3$, there exists a constant $c=c(r)$ and a collection $\mathcal{C}$ of $r$-templates of order $n$ such that\smallskip
\begin{compactenum}[\rm (i)]
  \item every rainbow restricted sum-free $r$-template of order $n$ is a subtemplate of some $P\in \mathcal{C}$;\smallskip
  \item for every $P\in \mathcal{C}$, $RS(P)\leq n^{-1/3}s([n])$;\smallskip
  \item $|\mathcal{C}|\leq 2^{cn^{2/3} \log^2n}$.
\end{compactenum}
\end{thm}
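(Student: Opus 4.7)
The plan is to recast the problem as a standard application of the multi-color container theorem (Theorem~\ref{baloghcontainer}). Form the $3$-uniform auxiliary hypergraph $\mathcal{H}$ on vertex set $V(\mathcal{H}):=[n]\times[r]$ whose edges are exactly the triples $\{(a,i),(b,j),(c,k)\}$ such that $(a,b,c)$ is a restricted Schur triple in $[n]$ and $i,j,k\in[r]$ are pairwise distinct. Identify every subset $S\subseteq V(\mathcal{H})$ with the $r$-template $P_S(x):=\{i\in[r]:(x,i)\in S\}$. Under this bijection, $S$ is an independent set of $\mathcal{H}$ if and only if $P_S$ is rainbow restricted sum-free, and more generally $e(\mathcal{H}[S])=RS(P_S)$ for every $S$, since each edge inside $S$ is exactly a choice of a restricted Schur triple together with a rainbow ordered coloring available to $P_S$.

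Next I would verify the hypotheses of Theorem~\ref{baloghcontainer} with $\tau:=n^{-1/3}$ and $\varepsilon:=n^{-1/3}/r^3$. The vertex count is $N=rn$; since each restricted Schur triple contributes $r(r-1)(r-2)$ rainbow orderings, $e(\mathcal{H})=r(r-1)(r-2)\,s([n])$, and by Proposition~\ref{numberofsum} the average degree satisfies $d=3e(\mathcal{H})/N\sim \tfrac{3}{4}(r-1)(r-2)n$. Two vertices $(x_1,i_1),(x_2,i_2)$ with $i_1\neq i_2$ lie in at most three restricted Schur triples (one for each way of placing $\{x_1,x_2\}$ in the positions of $(a,b,c)$), and the third vertex admits at most $r-2$ colors, giving $\Delta_2\leq 3(r-2)$; any three vertices lie in at most one edge, so $\Delta_3\leq 1$. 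Substituting into
\[
\Delta(\mathcal{H},\tau)=\frac{4\Delta_2}{d\tau}+\frac{2\Delta_3}{d\tau^2},
\]
both summands are $O_r(n^{-1/3})$, which is at most $\varepsilon/72$ once $n$ is large in terms of $r$; the side conditions $\tau<1/(200\cdot 6^2\cdot 3)$ and $\varepsilon<1/2$ also hold for large $n$.

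Theorem~\ref{baloghcontainer} then produces a family $\mathcal{C}'$ of subsets of $V(\mathcal{H})$ satisfying (i) every independent set of $\mathcal{H}$ lies in some $A\in\mathcal{C}'$, (ii) $e(\mathcal{H}[A])\leq \varepsilon\,e(\mathcal{H})$ for each $A\in\mathcal{C}'$, and (iii) $\log|\mathcal{C}'|\leq c'N\tau\log(1/\varepsilon)\log(1/\tau)=O_r(n^{2/3}\log^2 n)$. Setting $\mathcal{C}:=\{P_A:A\in\mathcal{C}'\}$ and invoking the correspondence above, (i) says that every rainbow restricted sum-free template is a subtemplate of some $P\in\mathcal{C}$, while (ii) translates to
\[
RS(P_A)=e(\mathcal{H}[A])\leq \varepsilon\cdot r(r-1)(r-2)\,s([n])\leq n^{-1/3}s([n]),
\]
and (iii) gives $|\mathcal{C}|\leq 2^{cn^{2/3}\log^2 n}$ for a suitable constant $c=c(r)$.

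The argument has no genuine obstacle beyond careful bookkeeping: because $\Delta_3=1$ is already optimal, no supersaturation is required, and the codegree bounds are immediate from the observation that a pair of integers extends to at most three restricted Schur triples. The one point that must be watched is that the $r$-dependent constants in $\Delta_2$, $d$, and $e(\mathcal{H})$ all get absorbed consistently when choosing $\varepsilon$, which is precisely why the leading constant $c$ in the conclusion is allowed to depend on $r$.
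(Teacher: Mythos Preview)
Your approach is essentially identical to the paper's: build the auxiliary $3$-uniform hypergraph on $[n]\times[r]$, identify templates with vertex subsets, and apply Theorem~\ref{baloghcontainer}. Two small points deserve correction. First, your codegree bound $\Delta_2\le 3(r-2)$ is slack; given $x_1<x_2$, the third integer completing a restricted Schur triple is either $x_1+x_2$ or $x_2-x_1$, so in fact $\Delta_2\le 2(r-2)$ (two of your three ``placements'' yield the same value). This does not affect the argument.

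Second, and more substantively, your parameter choice $\tau=n^{-1/3}$, $\varepsilon=n^{-1/3}/r^3$ does not verify the hypothesis $\Delta(\mathcal{H},\tau)\le \varepsilon/72$. The dominant term $2\Delta_3/(d\tau^2)$ is of order $C_r\,n^{-1/3}$ with $C_r\asymp 1/((r-1)(r-2))$, while $\varepsilon/72$ is of order $n^{-1/3}/r^3$; both are $\Theta_r(n^{-1/3})$, so taking $n$ large does nothing, and for every fixed $r\ge 3$ the required inequality fails by a constant factor. Since conclusion~(ii) forces $\varepsilon\le n^{-1/3}/\bigl(r(r-1)(r-2)\bigr)$, the fix is to put the $r$-dependent constant into $\tau$ instead: the paper takes $\tau=\sqrt{96\cdot 3!\cdot r}\,n^{-1/3}$, which makes $2/(d\tau^2)\le \varepsilon/72$ hold exactly. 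With this adjustment your argument goes through and matches the paper's proof.
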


\begin{proof}
Let $\mathcal{H}$ be a $3$-uniform hypergraph with vertex set $[n]\times \{1,2,...,r\}$, whose edges are all triples $\{(x_1, c_1), (x_2, c_2), (x_3, c_3)\}$ such that $(x_1, x_2, x_3)$ forms a restricted Schur triple in $[n]$ and $c_1, c_2, c_3$ are all different. In other words, every hyperedge in $\mathcal{H}$ corresponds to a rainbow restricted Schur triple. 
Note that there are exactly $r(r-1)(r-2)$ ways to rainbow color a restricted Schur triple with $r$ colors.  By Proposition~\ref{numberofsum}, the average degree $d$ of $\mathcal{H}$ is equal to
\[
d=\frac{3|E(\mathcal{H})|}{|V(\mathcal{H})|}
=\frac{3r(r-1)(r-2)s([n])}{nr}
\geq \frac{3(r-1)(r-2)n}{8}.
\]
Now we apply Theorem \ref{baloghcontainer} to $\mathcal{H}$. Let $\varepsilon=n^{-1/3}/r(r-1)(r-2)$ and
$\tau=\sqrt{96\cdot 3! \cdot r}n^{-\frac{1}{3}}$.
Observe that $\Delta_2(\mathcal{H})=2(r-2), \Delta_3(\mathcal{H})=1.$
For $n$ sufficiently large, we can get $\tau<1/(200\cdot3!^2\cdot3)$ and 
\[
\Delta(\mathcal{H},\tau)=\frac{4\Delta_2}{d\tau}+\frac{2\Delta_3}{d\tau^2}=\frac{8(r-2)}{d\tau}+\frac{2}{d\tau^2}\leq \frac{3}{d\tau^2}\leq \frac{\varepsilon}{12\cdot 3!}.
\]
Hence, there is a collection of vertex subsets $\mathcal{C'}$ satisfying properties (i)-(iii) of Theorem \ref{baloghcontainer}. Observe
that every vertex subset of $V(\mathcal{H})$ corresponds to an $r$-template of order $n$; every rainbow restricted sum-free $r$-template of order $n$ corresponds to an independent set in $\mathcal{H}$. Therefore, $\mathcal{C'}$ corresponds to a collection $\mathcal{C}$ of $r$-templates of order $n$ which satisfies properties (i)-(iii) of Theorem~\ref{container}.
\end{proof}

\begin{defi}[Good $r$-template]
For $A\subseteq [n]$, an $r$-template $P$ of order $n$ is a \textit{good $r$-template} of $A$ if it satisfies the following properties:\smallskip
\begin{compactenum}[(i)]
  \item For each element $i \in A$, $|P(i)|\geq 1$;\smallskip
  \item $RS(P)\leq n^{-1/3}s([n])$.
\end{compactenum}
\end{defi}

For a set $A\subseteq [n]$ and a collection of templates $\mathcal{P}$, denote by $G(\mathcal{P},A)$ the set of subtemplates $P'$ of some $P\in \mathcal{P}$ such that $P'$ is a rainbow sum-free $r$-coloring of $A$. 
Let $g(\mathcal{P},A)=|G(\mathcal{P},A)|$.
If $\mathcal{P}$ consists of a single $r$-template $P$, then we simply write $G(P,A)$ and $g(P,A)$.

\section{Proof of Theorem \ref{thm: hrange}}

Throughout this section, we fix an integer $r\geq 3$, a sufficiently large integer $n$ and an arbitrary set $A\subseteq [n]$ with $|A|=(1-\xi)n$, where 
\[
0\leq \xi \leq r^{-3}.
\] 
Let $\mathcal{C}$ be the collection of containers given by Theorem \ref{container}, and $\delta =1/(24\log n)$. 
We divide $\mathcal{C}$ into two classes
\begin{equation}\label{def:coll}
\mathcal{C}_1=\{P\in \mathcal{C} : g(P,A)\leq 2^{(1-\delta)n}\},
\quad
\mathcal{C}_2=\{P\in \mathcal{C} : g(P,A)> 2^{(1-\delta)n}\}.
\end{equation} 
Note that every $P\in\mathcal{C}_2$ is a good $r$-template of $A$.
The crucial part of the proof is to estimate $g(\mathcal{C}_2,A)$, which relies on the following four lemmas.

\begin{lemma}\label{(x,y)sumfree}
Let $F$ be the collection of pairs $\{a,b\}\subseteq  A$ with $a<b$ such that $\{a,b\}$ is not contained in any restricted Schur triple of $A$.
Then we have $|F|\leq \xi n^2+n/6$.
\end{lemma}
\begin{proof}
Let
\begin{gather*}
F_1= \{\{a,b\}\subseteq  A: a+b\in [n]\setminus  A, \ b=2a\},
\quad
F_2= \{\{a,b\}\subseteq  A: a+b>n, \ b=2a\},\\
F_3= \{\{a,b\}\subseteq  A: a+b\in [n]\setminus  A, \ b-a\in [n]\setminus  A\},~
F_4= \{\{a,b\}\subseteq  A: a+b>n, \ b-a\in [n]\setminus A\}.
\end{gather*}
Clearly, $|F|=\sum_{i=1}^{4}|F_i|$ and $|F_1|\leq |[n]\setminus  A|=\xi n$.
Since every $\{a,b\}\in F_2$ satisfies $b=2a\leq n$ and $a+b=3a>n$, we have $|F_2|\leq n/6$. Moreover, we obtain that $|F_3|\leq \binom{\xi n}{2}\leq {{\xi}^2n^2/2}$, as $a+b\in[n]\setminus A$ and $b-a\in[n]\setminus  A$. 
Similarly, we have $|F_4|< \frac{n}{2}\cdot\xi n=\frac{\xi}{2}  n^2$, as $b>n/2$ and $b-a\in[n]\setminus  A$. 
Finally, we conclude that $|F|\leq \xi n+n/6+{\xi}^2n^2/2+\xi n^2/2
\leq \xi n^2+n/6$.
\end{proof}

For a good $r$-template $P$ of $A$, let 
\[
X_1=\{x\in A: |P(x)|=1\},\quad X_2=\{x\in A :  |P(x)|=2\},\quad X_3=\{x\in A : |P(x)|\geq3\},
\] and $x_i=|X_i|$ for $i\in[3]$. 

\begin{lemma} \label{x_3}
Suppose that $P$ is a template of $A$ in $\mathcal{C}_2$. Then we have \[
\max\left\{\frac {(\xi- \delta)n+x_1}{\log{r}-1},\ 0\right\}< 
x_3 \leq 2n^{-1/3}n,
\]
and $\xi< 3(\log r -1)n^{-1/3} + \delta$.
\end{lemma}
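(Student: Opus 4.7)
The plan is to derive the two bounds on $x_3$ by independent arguments and then combine them to deduce that $\mathcal{C}_2$ is empty whenever $\xi \geq 2(\log r-1)n^{-1/3}+\delta$.

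For the lower bound I would start from the defining property $g(P,A)>2^{(1-\delta)n}$ of $\mathcal{C}_2$ and the trivial estimate
\[
g(P,A) \leq \prod_{x\in A}|P(x)| \leq 1^{x_1}\cdot 2^{x_2}\cdot r^{x_3} = 2^{x_2}r^{x_3},
\]
which just records that any subtemplate of $P$ picks one color from $P(x)$ at each $x\in A$. Taking logarithms gives $x_2+x_3\log r>(1-\delta)n$; substituting $x_2=(1-\xi)n-x_1-x_3$ (from $x_1+x_2+x_3=|A|=(1-\xi)n$) and rearranging yields $x_3(\log r-1)>(\xi-\delta)n+x_1$. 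When the right-hand side is nonpositive this only forces $x_3>0$, and the $\max\{\cdot,0\}$ in the statement handles both regimes.

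For the upper bound $x_3\leq 2n^{-1/3}n = 2n^{2/3}$, I would appeal to the container guarantee $RS(P)\leq n^{-1/3}s([n])\leq n^{5/3}/4$. The key observation is that whenever $(a,b)\in X_3^2$ with $a<b$ lies in some restricted Schur triple $(a,b,c)\in\mathcal{S}(A)$---equivalently, $(a,b)\notin F$---the palettes $|P(a)|,|P(b)|\geq 3$ together with $|P(c)|\geq 1$ force at least two rainbow colorings of that triple, which are then charged to $RS(P)$. A standard double count, using that each restricted Schur triple carries at most three $X_3$-pairs and each pair lies in at most two such triples, converts this into a lower bound on $RS(P)$ of the shape $(2/3)\bigl(\binom{x_3}{2}-|F\cap X_3^2|\bigr)$. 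To close the counting against $RS(P)\leq n^{5/3}/4$ and extract $x_3\leq 2n^{2/3}$ one needs a sharpening of Lemma~\ref{(x,y)sumfree} in the $X_3$-setting: rerunning its four-case split while insisting that both endpoints of the pair lie in $X_3\subseteq A$ trades one factor of $n$ for a factor of $x_3$ in each case, bringing $|F\cap X_3^2|$ down to $O(x_3\xi n + x_3)$, which in the regime $\xi\leq r^{-3}$ is small enough.

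To conclude the ``in particular'' statement, if $\xi\geq 2(\log r-1)n^{-1/3}+\delta$ then $(\xi-\delta)n\geq 2(\log r-1)n^{2/3}$, so the lower bound forces $x_3>2n^{2/3}$, contradicting the upper bound; hence $\mathcal{C}_2=\emptyset$. The main obstacle is the upper bound: the naive inequality $|F\cap X_3^2|\leq|F|\leq \xi n^2+n/6$ is too lossy in the target regime, and the proof must carefully argue, case by case, that pinning both endpoints of a pair into $X_3$ removes one quadratic degree of freedom from each of $F_3$ and $F_4$, ensuring that the error term $|F\cap X_3^2|$ is absorbed by either $\binom{x_3}{2}$ or the container budget $n^{5/3}/4$.
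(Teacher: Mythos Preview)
Your lower-bound argument is correct and identical to the paper's. The ``in particular'' deduction is also fine once both bounds are in hand. The problem is the upper bound.

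Your $X_3\times X_3$ pairing yields, after your (correct) sharpening of Lemma~\ref{(x,y)sumfree},
\[
RS(P)\;\geq\;\frac{2}{3}\Bigl(\binom{x_3}{2}-O(x_3\xi n+x_3)\Bigr),
\]
and comparing with $RS(P)\leq n^{-1/3}s([n])\leq n^{5/3}/4$ gives only
\[
x_3^2 \;\leq\; O(n^{5/3}) + O(x_3\xi n),
\]
hence $x_3=O(n^{5/6})$ at best (and when $\xi$ is a constant like $r^{-3}$ the error term $x_3\xi n$ is linear in $x_3 n$, so you do not even get that). This misses the target $x_3\leq 2n^{2/3}$ by a factor $n^{1/6}$; with the lower bound $x_3>2n^{2/3}$ under the hypothesis $\xi\geq 2(\log r-1)n^{-1/3}+\delta$, there is no contradiction, so the ``in particular'' clause is not established either.

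The paper gets the extra factor by pairing $X_3$ against $X_2$ rather than against itself. It first shows $x_2\geq(1-\xi)n/3$: if not, then $x_3$ would be linear in $n$, and \emph{then} the $X_3\times X_3$ count (with the crude Lemma~\ref{(x,y)sumfree}) already gives $RS(P)=\Omega(n^2)\gg n^{5/3}$, a contradiction. Once $x_2=\Omega(n)$ is secured, each $a\in X_3$ has $\Omega(n)$ partners $b\in X_2$ with $\{a,b\}$ inside some triple of $\mathcal{S}(A)$, and each such pair forces at least one rainbow subtemplate; summing gives $RS(P)\geq x_3\cdot n/8$, whence $x_3\leq 2n^{2/3}$. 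The moral is that your $X_3$--$X_3$ idea is exactly the right tool to rule out $x_3=\Omega(n)$, but to push $x_3$ down to $O(n^{2/3})$ one must exploit that the linear-sized set $X_2$ provides $\Omega(n)$ rainbow witnesses per element of $X_3$.
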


\begin{proof}
By the definitions of $G(P,A)$ and $\mathcal{C}_2$, we have
\begin{equation}\label{ineq:courainc}
2^{x_2}r^{x_3} \geq g(P,A)>2^{(1-\delta)n}.
\end{equation}
Since $x_2=|A|-x_1-x_3$ and $|A|=(1-\xi)n$, we obtain that 
\begin{equation}\label{eq:x3lb}
x_3>\frac {(\xi- \delta)n+x_1}{\log{r}-1}.
\end{equation}

We first claim that $x_2\geq (1-\xi)n/3$. Otherwise, we immediately have $x_1+x_3 > 2(1-\xi)n/3$. 
Together with~(\ref{eq:x3lb}), we obtain that
$x_3 > \frac{(2+\xi-3\delta)n}{3\log r} > \frac{n}{2\log r}$. 
By Lemma \ref{(x,y)sumfree} and $0\leq \xi\leq r^{-3}$, there are at least 
\[\binom{x_3}{2}-({\xi}n^2 + n/6)\geq \frac{n^2}{16\log^2 r}\]
pairs in $X_3$ that are contained in some restricted Schur triples in $A$. 
This contradicts the definition of good $r$-templates, as
$RS(P)\geq n^2/(3\cdot 16\log^2 r)> n^{-1/3}s([n])$.

For each $a\in X_3$, let $B_a=\{b\in X_2: \{a,b\}\subseteq S, \text{ for some} \ S\in \mathcal{S}(A)\}$.
Note that every $b\in X_2 \setminus B_a$ satisfies either $|b -a|\in [n]\setminus A$ or $|b - a| = \min\{a, b\}$. 
Then we have
$|B_a|\geq (1-\xi)n/3-2\xi n-2>n/6$.
Since $P$ is a good $r$-template of $A$, we obtain that
$$n^{-1/3}s([n])
\geq RS(P)
\geq \frac12\sum_{a\in X_3}|B_a|\geq \frac{x_3\cdot n}{12},$$
which indicates $x_3\leq 3n^{-1/3}n$.  Moreover, by inequality (\ref{eq:x3lb}), we have $\xi< 3(\log r -1)n^{-1/3} + \delta$.
\end{proof}

Note that if $\xi\ge 3(\log r -1)n^{-1/3} + \delta$, then $\mathcal{C}_2$ is empty by Lemma~\ref{x_3}. 
Therefore, in the following two lemmas, we require that $0\le \xi\le \min\{3(\log r -1)n^{-1/3} + \delta, r^{-3}\}$.
Next, we will prove a stability result on good templates with many rainbow sum-free colorings.

\begin{lemma} \label{(i,j)palette number} 
For every $P\in \mathcal{C}_2$, there exist two colors $\{i,j\}\subseteq [r]$ such that the number of elements in $A$ with palette $\{i,j\}$ is at least $(1-2\delta)n$.
\end{lemma}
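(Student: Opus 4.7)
The plan is to combine the good-template bound $RS(P)\leq n^{-1/3}s([n])$ with the entropy bound $g(P,A)>2^{(1-\delta)n}$ to force most elements of $A$ to share a single $2$-element palette. From Lemma~\ref{x_3} one has $x_3\leq 2n^{2/3}$, and substituting this into $g(P,A)\leq 2^{x_2}r^{x_3}$ yields $x_2\geq(1-\delta)n-2n^{2/3}\log r$ and hence $x_1+x_3\leq(\delta-\xi)_{+}n+O(n^{2/3})$, where $(t)_{+}=\max(t,0)$.

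Next I would establish the following rainbow criterion by a short case analysis: three $2$-element subsets of $[r]$ admit a rainbow selection if and only if they are not all identical, and more generally, whenever a restricted Schur triple $(u,v,w)\in\mathcal{S}(A)$ contains a pair $\{v,w\}\subseteq X_2$ with $P(v)\neq P(w)$, the triple admits a rainbow coloring under $P$, regardless of the palette $P(u)$ (which is non-empty since $u\in A$). Writing $Y_{ij}=\{x\in X_2:P(x)=\{i,j\}\}$, $y_{ij}=|Y_{ij}|$, $y^{*}=\max_{\{i,j\}}y_{ij}$, and $Q=\sum y_{ij}^{2}$, the number of unordered cross-pairs in $X_2$ (i.e., pairs with distinct palettes) equals $(x_2^2-Q)/2$, and by Lemma~\ref{(x,y)sumfree} at most $|F|\leq\xi n^2+n/6$ pairs in $A^2$ fail to be contained in some restricted Schur triple. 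Since each restricted Schur triple contains at most three cross-pairs, the rainbow criterion yields
\[
RS(P)\;\geq\;\frac{1}{3}\!\left(\frac{x_2^2-Q}{2}-|F|\right).
\]

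Combining with $RS(P)\leq n^{5/3}/4$ (from $P$ being a good template) gives $x_2^2-Q\leq 3n^{5/3}/2+2|F|$. The trivial bound $Q\leq y^{*}x_2$ then produces $x_2-y^{*}\leq(3n^{5/3}/2+2|F|)/x_2$, and after substituting $|F|\leq\xi n^2/2+O(n)$, $x_2\geq(1-\delta)n-o(n)$, and $\xi<2(\log r-1)n^{-1/3}+\delta$ (from Lemma~\ref{x_3}), careful bookkeeping gives $x_2-y^{*}\leq\xi n+o(\delta n)$. Combining with the first-paragraph bound on $x_1+x_3$, one obtains $|A|-y^{*}=x_1+x_3+(x_2-y^{*})\leq\delta n+o(\delta n)$, whence $y^{*}\geq|A|-\delta n-o(\delta n)=(1-\xi-\delta)n-o(\delta n)\geq(1-2\delta)n$ for $n$ sufficiently large. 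I expect the main obstacle to be precisely this final constant tracking: since $\xi$ can be as large as $\delta+2(\log r-1)n^{-1/3}$, the slack between the lower and upper bounds is tight, and the $O(n^{2/3})$ error terms must be absorbed into $o(\delta n)=o(n/\log n)$, which relies crucially on $n^{2/3}=o(n/\log n)$ for large $n$.
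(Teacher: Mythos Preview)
Your approach follows the same overall plan as the paper---use $RS(P)\leq n^{-1/3}s([n])$ together with $g(P,A)>2^{(1-\delta)n}$ to force most of $X_2$ into a single palette class---but the key counting step is organised differently, and this difference is fatal for the constant you need.

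The paper does \emph{not} subtract the global bad-pair count $|F|$. Instead, it fixes the largest palette class $Y_{1,2}$, sets $Y'=X_2\setminus Y_{1,2}$, and for each $a\in Y'$ bounds the number of $b\in Y_{1,2}$ with $\{a,b\}$ lying in some Schur triple of $A$ from below by $|Y_{1,2}|-2\xi n-1\geq n/(r(r-1))$. Summing over $a\in Y'$ yields $RS(P)\geq |Y'|\cdot\Omega(n)$, hence $|Y'|=O(n^{2/3})$ \emph{independently of $\xi$}. This gives $y^*\geq x_2-O(n^{2/3})\geq(1-\delta)n-O(n^{2/3})\geq(1-2\delta)n$ with plenty of room.

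Your route, by contrast, subtracts all of $|F|$ from the total cross-pair count. But $|F|$ is dominated by pairs \emph{inside} $Y_{1,2}$ (since $|Y_{1,2}|\approx n$), which are not cross-pairs at all; subtracting them is pure waste. The resulting loss, after dividing by $x_2\approx n$, is $\Theta(\xi n)$ rather than $O(n^{2/3})$. Concretely, your chain gives
\[
y^*\;\geq\;(1-\xi-\delta)n-o(\delta n),
\]
and the hypothesis allows $\xi$ as large as $\delta+2(\log r-1)n^{-1/3}$. Taking $\xi=\delta$ (or larger) makes the right-hand side equal to $(1-2\delta)n-o(\delta n)$, which is strictly below $(1-2\delta)n$; there is no way to absorb the positive $o(\delta n)$ error. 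So the final inequality ``$\geq(1-2\delta)n$ for $n$ sufficiently large'' is false as written, and the obstacle you anticipated is real, not merely a bookkeeping nuisance.

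The fix is exactly the paper's per-element count: restrict attention to pairs with one endpoint in $Y'$, so that the bad-pair subtraction scales with $|Y'|$ rather than with $x_2$.
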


\begin{proof}
By Lemma~\ref{x_3} and~(\ref{ineq:courainc}), we have \[x_2\geq(1-\delta-3\log r \cdot n^{-1/3})n.\]
For $1\leq i<j\leq r$, define $Y_{i,j}:=\{x\in X_2 : P(x)=\{i,j\}\}$.
Without loss of generality, we can assume that $|Y_{1,2}|\geq x_2/\binom{r}{2}$. 
Let $Y'=X_2\setminus Y_{1,2}$.
For each $a\in Y'$, let $B_a=\{b\in Y_{1,2}: \{a,b\}\subseteq S, \text{ for some } S\in \mathcal{S}(A)\}$. Similarly as in Lemma~\ref{x_3}, we obtain that $|B_a|\geq x_2/\binom{r}{2}-2\xi n-2$, and then
\[
n^{-1/3}s([n])\geq RS(P)\geq \frac{1}{2}\sum_{a\in Y'}|B_a|\geq \frac{|Y'|\cdot n}{2r(r-1)}.
\]
Since $\delta =1/(24\log n)$, we have $|Y_{1,2}|=x_2-|Y'|\geq (1-2\delta)n$, which completes the proof.
\end{proof}

\begin{lemma} \label{P_{i,j}}
For two colors $\{i,j\}\subseteq [r]$, denote by $\mathcal{P}=\mathcal{P}(i, j)$ the set of good $r$-templates of $A$ in which there are at least $(1-2\delta)n$ elements in $A$ with palette $\{i,j\}$. Then
\[
]g(\mathcal{P}, A)\leq 2^{|A|}(1 + 2^{-n/15}).
\]
\end{lemma}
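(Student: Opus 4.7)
The plan is to classify each rainbow sum-free coloring $\chi \in G(\mathcal{P}, A)$ by its \emph{deviant set}
\[
D(\chi) := \{x \in A : \chi(x) \notin \{i, j\}\}.
\]
Since each $P \in \mathcal{P}$ has palette $\{i, j\}$ on at least $(1-2\delta)n$ elements of $A$, every element of $D(\chi)$ must lie outside that palette-$\{i,j\}$ block, giving $|D(\chi)| \leq |A| - (1-2\delta)n \leq 2\delta n$. (If $|A| < (1-2\delta)n$ then $\mathcal{P}$ is empty and the bound is trivial, so we may assume $|A| \geq (1-2\delta)n$.) The colorings with $D(\chi) = \emptyset$ are arbitrary maps $A \to \{i, j\}$; they are automatically rainbow sum-free and contribute at most $2^{|A|}$. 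It then suffices to bound the remaining contribution by $2^{|A|} \cdot 2^{-n/12}$.

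Fix $\chi$ with $1 \leq d := |D(\chi)| \leq 2\delta n$, choose any $x_0 \in D(\chi)$, and set $k = \chi(x_0) \notin \{i, j\}$. For each restricted Schur triple $\{x_0, y, z\}$ with $y, z \in A \setminus D(\chi)$, we have $\chi(y), \chi(z) \in \{i, j\}$, so the rainbow sum-free condition forces $\chi(y) = \chi(z)$. Let $H_{x_0}$ be the graph on vertex set $A \setminus D(\chi)$ whose edges are exactly these pairs $\{y, z\}$. Then any valid extension $\chi|_{A \setminus D(\chi)}$ must be constant on each connected component of $H_{x_0}$, giving at most $2^{c(H_{x_0})}$ extensions, where $c(H_{x_0})$ denotes the number of components.

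The main obstacle is to show that $\mathrm{rank}(H_{x_0}) = |A \setminus D(\chi)| - c(H_{x_0}) \geq n/5$ uniformly in $x_0 \in [n]$. I would split by the size of $x_0$. For $x_0 \leq n/2$, the restricted Schur triples with $x_0$ as the smallest or middle entry contribute \emph{shift edges} $\{a, a+x_0\}$ for $a \in [1, n-x_0] \setminus \{x_0\}$, totaling $n - x_0 - 1 \geq n/2 - 1$ edges in $[n]$; these decompose along residue classes modulo $x_0$ into vertex-disjoint paths, hence form a forest. Each vertex of $([n] \setminus A) \cup D(\chi)$ destroys at most two such edges, so at least $n/2 - 1 - 2(\xi n + 2\delta n) \geq n/3$ shift edges survive in $A \setminus D(\chi)$. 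For $x_0 > n/2$, the triples with $x_0$ as the largest entry contribute \emph{sum edges} $\{a, x_0 - a\}$ for $a \in [1, \lfloor (x_0-1)/2 \rfloor]$, forming a matching of size at least $n/4 - 1$; each missing vertex kills at most one such edge, so a matching of size at least $n/4 - 1 - (\xi n + 2\delta n) \geq n/5$ remains. Using $\xi \leq r^{-3} \leq 1/27$ and $\delta = 1/(24 \log n)$, in both cases the rank of $H_{x_0}$ is at least $n/5$ for $n$ sufficiently large.

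Summing over $d$, the choice of $D$, and the coloring on $D$, the total contribution with $D \neq \emptyset$ is at most
\[
\sum_{d=1}^{2\delta n} \binom{|A|}{d}(r-2)^d \cdot 2^{|A| - d - n/5} \leq 2^{|A| - n/5 + o(n)} \leq 2^{|A|} \cdot 2^{-n/12}
\]
for $n$ large enough; here we use that $\binom{n}{d}(r/2)^d = 2^{o(n)}$ uniformly in $d \leq 2\delta n = n/(12 \log n)$. Combined with the $D = \emptyset$ count of $2^{|A|}$, this yields $g(\mathcal{P}, A) \leq 2^{|A|}(1 + 2^{-n/12})$.
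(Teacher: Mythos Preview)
Your proof is correct and follows essentially the same strategy as the paper: split $G(\mathcal{P},A)$ according to whether the coloring uses a color outside $\{i,j\}$, and for colorings that do, fix one deviant element and use the restricted Schur triples through it to force many equalities among the remaining $\{i,j\}$-colored elements. The only differences are cosmetic: the paper extracts a large \emph{matching} of such pairs (of size at least $n/6$, via a case split at $2n/5$), whereas you use the full component structure of the link graph (rank at least $n/5$, via a case split at $n/2$); both feed into the same overhead estimate $\sum_{d\le 2\delta n}\binom{n}{d}r^{O(d)}=2^{o(n)}$ to reach $2^{|A|-n/12}$.
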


\begin{proof}

For an $r$-coloring $g\in G(\mathcal{P}, A)$, let $S(g)$ the set of elements in $A$ that are not colored by $i$ or $j$. 
By the definition of $\mathcal{P}$, we have $|S(g)|\leq 2\delta n$.
Define 
\[\mathcal{G}_0=\{g\in G(\mathcal{P}, A) : S(g)=\varnothing\}, \quad\text{and}\quad\mathcal{G}_1=\{g\in G(\mathcal{P}, A) : |S(g)|\geq 1\}.\] Clearly, we have $g(\mathcal{P}, A)=|\mathcal{G}_0| + |\mathcal{G}_1|$ and $|\mathcal{G}_0|\leq 2^{|A|}$. It remains to show that $|\mathcal{G}_1|\leq 2^{|A| - n/15}.$

Let us consider the ways to color $A$ so that the resulting colorings are in $\mathcal{G}_1$. We first choose a set $A_0\subseteq A$ of size at most $2\delta n$, which will be colored by the colors in $[r]\setminus \{i, j\}$. The number of options is at most $\sum_{1\leq k\leq 2\delta n}\binom{n}{k}$, and the number of colorings is at most $r^{2\delta n}$.
Once we fix $A_0$ and the colors of its elements, take an arbitrary element $t\in A_0$. 
\begin{claim}\label{claim:dispair}
Let $\mathcal{D}(t)$ be the collection of disjoint pairs $\{a, b\}$ in $A\setminus A_0$ such that $\{a, b, t\}$ forms a restricted Schur triple.
Then $|\mathcal{D}(t)|\geq n/7$.
\end{claim}
\begin{proof}
Define
\[
\mathcal{S}_1= \{\{a,b\}\subseteq [n]: a + b=t,\ a<b\},\ \text{and} \
\mathcal{S}_2= \{\{a,b\}\subseteq [n]: t+a=b,\ t<a<b\}.
\]
We first observe that $|\mathcal{S}_1|= \lfloor(t-1)/2\rfloor$ for every $t\in [n]$. Note that all pairs in $\mathcal{S}_1$ are disjoint. Therefore, if $t> 2n/5$, we have
$|\mathcal{D}(t)|\geq |\mathcal{S}_1| - \xi n - |A_0|\geq |\mathcal{S}_1| - (2\delta + \xi) n\geq n/7$.
If $t\leq 2n/5$, observe that $|\mathcal{S}_2|= n -2t$ and all pairs in $\mathcal{S}_2$ are disjoint. Therefore, we obtain that $|\mathcal{D}(t)|\geq |\mathcal{S}_2| - \xi n - |A_0|\geq |\mathcal{S}_2| - (2\delta + \xi) n\geq n/7$.
\end{proof}

For every pair $\{a, b\}\in \mathcal{D}(t)$, since $t$ is colored by some color in $[r]\setminus\{i, j\}$, and $a, b$ can only be colored by $i$ or $j$, the elements $a$ and $b$ must receive the same color in order to avoid the rainbow Schur triple. Therefore, together with Claim~\ref{claim:dispair}, the number of ways to finish the colorings is at most
\[
2^{|A| - |A_0| - |\mathcal{D}(t)|}\leq 2^{|A| - n/7}.
\]
Hence, we obtain that
\[
|\mathcal{G}_2|\leq 
\sum_{1\leq k\leq 2\delta n}\binom{n}{k}r^{2\delta n}2^{|A| - n/7}
\leq 2^{|A| + 2\delta n(\log n + \log r) - n/7}\leq 2^{|A| - n/15},
\]
where the last inequality follows from $\delta=1/(24\log n)$.
\end{proof}

Now we have all ingredients to prove Theorem \ref{thm: hrange}.

\begin{proof}[{\bf Proof of Theorem \ref{thm: hrange}}]
First, by property (i) of Theorem~\ref{container} , every rainbow sum-free $r$-coloring of $A$ is a subtemplate of some
$P \in  \mathcal{C}$.
By Property (iii) of Theorem \ref{container} and the definition of $\mathcal{C}_1$ (see~(\ref{def:coll})), we have
$$g(\mathcal{C}_1, A) \leq |\mathcal{C}_1|\cdot 2^{(1-\delta)n} \leq | \mathcal{C}|\cdot 2^{(1-\delta)n} <2^n\cdot 2^{-n/(25\log n)}.$$
If $\xi\geq 3(\log r -1)n^{-1/3} + \delta$, using Lemma~\ref{x_3}, we are done by $g(\mathcal{C}, A)=g(\mathcal{C}_1, A)$.
Otherwise, by Lemma \ref{(i,j)palette number} and Lemma \ref{P_{i,j}}, we obtain that
$$g(\mathcal{C}_{2}, A)= \sum_{1\leq i<j\leq r}g(\mathcal{P}(i,j), A)\leq \binom{r}{2}2^{|A|}(1+2^{-n/15}).$$
Hence, we have
$$g(\mathcal{C}, A)=g(\mathcal{C}_{1}, A)+g(\mathcal{C}_{2}, A)\leq 2^n\cdot 2^{-\frac{n}{25\log n}}+\binom{r}{2}2^{|A|}(1+2^{-n/15})\leq \binom{r}{2}\cdot 2^{|A|}+2^{-\frac{n}{26\log n}}2^n,$$
which gives the desired upper bound on the number of rainbow sum-free $r$-colorings of $A$.
\end{proof}

\section{Proof of Theorems~\ref{thm: three} and \ref{thm:sta}}
The following lemma gives us a structural description of large sum-free sets.
\begin{lemma}\label{lem:str}
Let $\varepsilon,c>0$, $10\varepsilon<c\le 1$, and $\varepsilon<1/10$. Let $A,B\subseteq[n]$ such that $A\cap B=\varnothing$, $B$ is sum-free, and $|A|=cn$. If $|B|\geq  \lceil(1/2 - \varepsilon) n \rceil$, then $(A+B)\cap B\neq \varnothing$.
\end{lemma}

\begin{proof}
Suppose, to the contrary, that $(A+B)\cap B=\varnothing$. Since $|B|\ge  \lceil(1/2 - \varepsilon) n \rceil> \lceil 2n/5 \rceil$, by Lemma~\ref{Stabilitylem3}, either $B$ only contains odd numbers, or $ \min(B)\ge |B|\geq \lceil(1/2 - \varepsilon) n \rceil$. 

If $B$ only contains odd numbers, then there is $d\geq c-\varepsilon$, such that $|A\cap E|=dn$, where $E\subseteq [n]$ is the set of all even numbers. 
Thus, there exists an $a\in A\cap E$ such that $dn\leq a\leq (1-d)n$. 
Let $P$ be the collection of all pairs $\{i,\ i+a\}$, where $i$ is odd, and $1\leq i\leq dn$. Observe that all pairs in $P$ are pairwise disjoint and $P$ contains between $\lfloor dn/2 \rfloor$ and $\lceil dn/2 \rceil$ elements, that is, approximately $dn/2$ elements.
Since $(A+B)\cap B=\varnothing$, for each pair $\{i,j\}$ in $P$, at least one of $\{i,j\}$ is not in $B$. This implies 
\[
|B|\leq \frac n2- |P|\leq \frac n2 - \frac {dn}{2}
\leq \frac n2 - \frac{(c - \varepsilon)n}{2}\leq
\left(\frac{1}{2}-2\varepsilon\right)n,\] which contradicts the assumption of $B$.

Let $b:= \min(B)$. If $b\ge \lceil(1/2 - \varepsilon) n \rceil$,  then there is $d\geq c-2\varepsilon$ such that $|A\cap[b-1]|=dn$. 
Therefore, we have 
$|(A + B) \cap [b,n]| \geq |((A \cap [b-1]) + \{b\}) \cap [b,n]| \geq dn - 2 \varepsilon n > 6 \varepsilon n$.
Since $(A + B) \cap B = \varnothing$, $|B| \leq |[b,n]| - 6\varepsilon n \leq n/2 - 5 \varepsilon n$ which is a contradiction.
\end{proof}

Our next lemma says that when the number of colorings $r=3$ and the size of $A$ is significantly smaller than $n$, the number of rainbow sum-free $r$-colorings of $A$ will be much less than $2^n$. And when $r\geq 5$ and the size of $A$ is significantly larger than $n/2$,  the number of rainbow sum-free $r$-colorings of $A$ will be much less than $r^{n/2}$.

\begin{lemma}\label{lem:mid}
Let $\varepsilon>0$, $r$ be a positive integer, and let $A$ be a subset of $[n]$. Then the following holds.\smallskip
\begin{compactenum}[\rm (i)]
  \item If $r=3$, and $|A|\leq (1-\varepsilon)n$, then there is a constant $\delta_1=\delta_1(\varepsilon)>0$, such that $$g(A,3)\leq 2^{(1-\delta_1)n}.$$
  \item If $r\geq5$, and $|A|\geq \big(1/2+\varepsilon\big)n$, then there is a constant $\delta_1=\delta_1(\varepsilon,r)>0$ such that $$g(A,r)\leq r^{(1/2-\delta_1)n}.$$
\end{compactenum}
\end{lemma}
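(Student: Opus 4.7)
The plan is to apply the multi-color container theorem (Theorem~\ref{container}) and reduce the problem to bounding $g(P,A)$ for each individual container $P$, then exploit the structural properties of good templates via the sum-free stability results. By Theorem~\ref{container} there is a collection $\mathcal{C}$ of $r$-templates with $|\mathcal{C}|\leq 2^{O(n^{2/3}\log^2 n)}=2^{o(n)}$ covering every rainbow sum-free coloring of $A$, and with $RS(P)\leq n^{-1/3}s([n])=o(n^2)$ for each $P\in\mathcal{C}$. Since $|\mathcal{C}|=2^{o(n)}$ is negligible, it suffices to prove the stated bound on $g(P,A)$ for each container $P$ individually (with a slightly smaller constant $\delta_1$). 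For each $P$, set $X_k=\{x\in A:|P(x)|=k\}$ and $x_k=|X_k|$, so $g(P,A)\leq\prod_{k\geq 1}k^{x_k}$.

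The central structural claim is that $Y:=\{x\in A:|P(x)|\geq 3\}$ is essentially sum-free: any restricted Schur triple contained entirely in $Y$ contributes at least one rainbow Schur triple to $RS(P)$, hence $s(Y)\leq RS(P)=o(n^2)$. Applying Green's removal lemma (Lemma~\ref{Stabilitylem2}) gives $Y=B\cup C$ with $B$ sum-free and $|C|=o(n)$, and Lemma~\ref{Stabilitylem3} then forces $B$ to be small, to consist of odd numbers, or to be contained in the upper half of $[n]$; in any case $|Y|\leq n/2+o(n)$.

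For Part~(i), the inequality $g(P,A)\leq 2^{|A|}(3/2)^{x_3}$ together with $|A|\leq(1-\varepsilon)n$ and $x_3\leq n/2+o(n)$ handles the regime where $\varepsilon$ is comfortably large. In the small-$\varepsilon$ regime the structural information on $X_3$ (odd numbers or upper half) together with Lemma~\ref{lem:str}, applied with $B$ a sum-free subset of $X_3$ and a linear-sized $A'\subseteq X_2$, produces many restricted Schur triples $(a,b,c)$ with $a,b\in X_2$ and $c\in X_3$. Each such triple forbids the pattern ``$c_a\neq c_b$ and $c_c\notin\{c_a,c_b\}$'', which gives a multiplicative entropy saving in the count per triple; summing these savings strengthens the bound to $2^{(1-\delta_1)n}$.

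For Part~(ii), I would similarly start from $g(P,A)\leq 2^{|A|}(r/2)^{|Y|}$. Since $r\geq 5$ gives $\log_2 r>2$, this estimate together with $|Y|\leq n/2+o(n)$ already suffices when $|A|$ is comfortably smaller than $n$. The hard case is $|A|$ close to $n$: here Proposition~\ref{numberofsum} supplies $\Omega(n^2)$ restricted Schur triples in $A$, and I would apply Lemma~\ref{lem:str} with $B$ a sum-free subset of $Y$ of size close to $n/2$ and $A'\subseteq A\setminus Y$, producing many triples $(a,b,c)$ with $a,b\in A\setminus Y$ (palette size $\leq 2$) and $c\in Y$. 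These force correlations that effectively shrink the palette at the $Y$-vertex, and combining these savings with the entropy slack $\log_2 r-1>1$ from $r\geq 5$ yields the desired $r^{(1/2-\delta_1)n}$ bound. The main obstacle throughout is quantifying the correlation savings so that the accumulated multiplicative gains beat the naive palette-entropy bound, which requires careful case analysis following the structural dichotomy from Lemma~\ref{Stabilitylem3}.
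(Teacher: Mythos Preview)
Your high-level framework (containers, then removal, then structural analysis) matches the paper, but the proposal has a genuine gap at the core of both parts. You apply the removal lemma only to $Y=X_3$, obtaining merely $x_3\leq n/2+o(n)$. The paper instead applies removal to the \emph{entire} template $P$, producing a template $P'$ with \emph{no} rainbow Schur triples at all. This yields, in addition to $(X_3+X_3)\cap X_3=\varnothing$, the stronger constraints $(T+T)\cap X_3=\varnothing$ and $(X_3+T)\cap T=\varnothing$ for $T=X_2\cup X_3$. Taking the maximum element $m\in X_3$ and pairing $i$ with $m-i$ then gives the key inequality
\[
|T|+x_3 \;\leq\; n+O(1),\qquad\text{i.e.}\qquad x_2+2x_3\leq n+o(n).
\]
This single inequality is what drives both parts: writing $x_2+x_3\log r=\tfrac{\log r}{2}\bigl(|T|+x_3\bigr)-\tfrac12\bigl(1-\tfrac{2}{\log r}\bigr)x_2\log r$, one sees that for $r\geq 5$ any linear $x_2$ already yields savings, and for $r=3$ a case split on $x_2$ versus $(1-\tfrac52\varepsilon)n$ finishes. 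Your bound $x_3\leq n/2+o(n)$ alone is too weak: for part~(ii), $g(P,A)\leq 2^{|A|-|Y|}r^{|Y|}$ is maximized at $|Y|=n/2$, giving $2^{|A|-n/2}r^{n/2}$, which \emph{exceeds} $r^{n/2}$ whenever $|A|>n/2$---so it never suffices, contrary to what you claim.

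Your fallback plan (use Lemma~\ref{lem:str} to manufacture many Schur triples and harvest entropy savings) is also problematic. Lemma~\ref{lem:str} produces a \emph{single} element of $(A'+B)\cap B$, not many triples; and the triple it yields has one element in $A'$ and two in $B$, not two in $A'\subseteq X_2$ and one in $X_3$ as you describe. In the paper, Lemma~\ref{lem:str} is used only once, and only to derive a contradiction (from $(X_1+X_3)\cap X_3=\varnothing$) that forces $x_3\leq(1/2-\varepsilon/40)n$ in the sub-case where $x_2$ is tiny and $x_1$ is linear. There is no entropy-savings bookkeeping; the whole argument runs on the inequality $x_2+2x_3\leq n+o(n)$.
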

\begin{proof}
Let $\mathcal{C}$ be the collection of $r$-templates given by Theorem~\ref{container}, and let 
\[
g_{\max}(\mathcal{C},A)=\max_{P\in \mathcal{C}}g(P, A).
\]
For each $P\in\mathcal{C}$, if $P$ is not a good $r$-template of $A$, then $g(P, A)=0$. Therefore, $g_{\max}(\mathcal{C},A)$ is always achieved by a good $r$-template.
Let $P$ be a good template of $A$. By Proposition~\ref{numberofsum}, we have $RS(P)\le n^{-1/3}s([n])=o(n^2)$ which means that the number of rainbow Schur triples induced by $P$ is $o(n^2)$. Let $S\subseteq [n]$ be the set formed by those elements of rainbow Schur triples induced by $P$. Clearly, $S$ is a set containing $o(n^2)$ Schur triples. By Lemma~\ref{Stabilitylem2}, 
there is a set $E\subseteq [n]$ such that $S\setminus E$ is sum-free and $|E|=o(n)$. Therefore, 
there is a template $P':[n]\setminus E\to 2^{[r]}$, such that $P\mid_{[n]\setminus E}=P'$, 
and $RS(P')=0$.
Define 
\[
X_1=\{a\in A\setminus E: |P'(a)|=1\},~ X_2=\{a\in A\setminus E: |P'(a)|=2\},~ X_3=\{a\in A\setminus E: |P'(a)|\geq 3\},
\] 
and $x_i=|X_i|$ for $i\in[3]$. 
Set $T=X_2\cup X_3$. Therefore, we have
\begin{equation}\label{eq:T}
(X_3+X_3)\cap X_3=\varnothing,\quad (T+T)\cap X_3=\varnothing,\quad (X_3+T)\cap T=\varnothing.
\end{equation}
 As $X_3$ is restricted sum-free, we have $x_3\leq \lfloor n/2 \rfloor+1$. 


Let $m$ be the largest element in $X_3$. By (\ref{eq:T}), for every $i< m$, at least one of $\{i,\ m-i\}$ is not in $T$, and same for $X_3$. Hence, we have 
\begin{equation}\label{eq:P3}
|T|\leq n-\Big\lceil\frac{m-1}{2}\Big\rceil,\quad x_3\leq m-\Big\lceil\frac{m-1}{2}\Big\rceil.
\end{equation}

\noindent\textbf{Case 1: $r=3$.}

Observe that we may assume $\varepsilon< 2/5$, as otherwise we will get $g(A, 3)\leq 3^{|A|}\leq 2^{0.955n}$, which completes the proof with $\delta_1=0.045$. We first consider the case when $x_2\leq(1-5\varepsilon/2)n$. 
Then we have
\begin{align*}
    \log g(\mathcal{C},A)&\leq
    \log (|\mathcal{C}|\cdot g_{\max}(\mathcal{C},A))
    \leq cn^{2/3}\log^2n+|E|\log 3+x_2+x_3\log 3\nonumber\\
    &=o(n)+x_2+x_3\log 3
    = o(n)+\left(|T|+x_3-\left(1-\frac{2}{\log 3}\right)x_2\right)\frac{\log 3}{2} \nonumber\\
    &\leq n+o(n)-\frac{5\varepsilon}{2}\left(1-\frac{\log3}{2}\right)n< (1-\delta_1)n,
\end{align*}
where we take $\delta_1=\frac{5\varepsilon}{4}(1-\frac{\log3}{2})$.

Now, we may assume that $x_2> (1-5\varepsilon/2)n$. Then $x_3\leq |A|-x_2<3\varepsilon n/2$. Thus we obtain
\begin{align*}
\log g(\mathcal{C},A)&\leq o(n)+x_2+x_3\log 3
\leq o(n)+|A|+(\log 3-1)x_3\\
&\leq n+o(n)-\frac{\varepsilon}{2}\left(5-3\log3\right)n
<(1-\delta_1)n,    
\end{align*}
and we take $\delta_1=\frac{1}{4}\left(5-3\log3\right)\varepsilon$.\smallskip
\newline

\noindent\textbf{Case 2: $r\geq5$.}

Since $|A|\geq (1/2+\varepsilon)n$, and $P$ is a good $r$-template of $A$, we have that 
$x_1+x_2 = |A|-x_3-|E|\geq \varepsilon n/2$ 
for large enough $n$. We first assume that $x_2\geq\frac{\varepsilon n}{100}$. 
Similarly, we get
\begin{align}
    \log g(\mathcal{C},A)&\leq
    \log (|\mathcal{C}|\cdot g_{\max}(\mathcal{C},A))
    \leq c n^{2/3}\log^2n+|E|\log r+x_2+x_3\log r\nonumber\\
    &=o(n)+x_2+x_3\log r
    = o(n)+\left(|T|+x_3-\left(1-\frac{2}{\log r}\right)x_2\right)\frac{\log r}{2} \nonumber\\
     &\leq o(n)+\left(n-\left(1-\frac{2}{\log r}\right)\frac{\varepsilon}{100}n\right)\frac{\log r}{2} < \left(\frac{n}{2}-\delta_1 n\right)\log r,\label{eq:mid}
\end{align}
where we take $\delta_1=\frac{1}{300}\left(1-\frac{2}{\log r}\right)\varepsilon$. Note that $\delta_1>0$ as $r\geq5$.

Finally, we may assume that $x_2 < \frac{\varepsilon n}{100}$, and then $x_1\geq \frac{\varepsilon n}{2} - x_2\geq \frac{\varepsilon n}{3}$.
We claim that $x_3\leq(1/2-\varepsilon/40)n$. 
Otherwise, by the way we construct $P'$, we also have
$(X_1+X_3)\cap X_3=\varnothing$
and this contradicts Lemma~\ref{lem:str}.
Similarly as before, we can conclude that
\[
\log g(\mathcal{C},A)\leq o(n)+x_2+x_3\log r
\leq o(n)+ \frac{\varepsilon n}{100}+ (1/2-\varepsilon/40)n\log r\leq \left(\frac{n}{2}-\delta_1 n\right)\log r,
\]
where we take $\delta_1=\frac{\varepsilon}{400}$.
\end{proof}

The case when $r=4$ is more involved, and we will discuss it later in this section. But the result in Lemma~\ref{lem:mid} (i) is enough to imply Theorem~\ref{thm: three}.
\begin{proof}[{\bf Proof of Theorem~\ref{thm: three}}]
Observe that $g([n], 3)\geq 3\cdot 2^n -3$. Suppose $A\subseteq[n]$ and $A\neq[n]$. When $|A|\leq (1-3^{-3})n$, by Lemma~\ref{lem:mid} (i), there is $\delta_1>0$ such that $g(A,3)\leq 2^{(1-\delta_1)n}<g([n],3).$ Now, we have $(1-3^{-3})n<|A|\leq n-1$. By Theorem~\ref{thm: hrange}, $g(A,3)\leq (1.5+o(1))2^n<g([n],3)$.
\end{proof}


The next lemma records an easy fact about intervals which is used many times in the later proofs. 

\begin{lemma}\label{lem:inter}
Let $\varepsilon>0$, and let $a,b$ be integers such that $0<a<b<n$, $3\varepsilon n<a<n/2-2\varepsilon n$, and $a+3+3\varepsilon n<b\leq 2a$.
Suppose $A\subseteq [a+1,b]$, $B\subseteq [b+1,n]$, and $|A|>b-a-\varepsilon n$, $|B|>n-b-\varepsilon n$. Then $(A+A)\cap B\neq\varnothing$.
\end{lemma}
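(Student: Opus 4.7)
My plan is to pick a single distinguished element $x_0 \in A$ and to show that the translate $x_0 + A$ meets $[b+1,n]$ in more elements than the complement $[b+1,n] \setminus B$ can absorb, then conclude by pigeonhole that some $x_0 + y \in B$ with $y \in A$. The natural choice is $x_0 := \min A$. Since $A \subseteq [a+1,b]$ and $|A| > b - a - \varepsilon n$, the complement of $A$ inside $[a+1,b]$ has size strictly less than $\varepsilon n$, and hence $x_0 \leq a + 1 + \varepsilon n$.

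Next I would count the number of $y \in A$ with $y + x_0 \in [b+1,n]$, i.e.\ $y \in [b+1-x_0,\, n-x_0]$. The key simplification is that the lower endpoint is vacuous: since $x_0 \geq a+1$ and $b \leq 2a$, we have $b+1-x_0 \leq b-a \leq a < a+1 \leq y$ for every $y \in A$. So the only binding constraint is $y \leq n - x_0$, and the count is at least $|A| - \max(0,\, b + x_0 - n)$.

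I would then split into two cases. If $b + x_0 \leq n$, the count is simply at least $|A| > b - a - \varepsilon n$, which exceeds $\varepsilon n$ because the hypothesis $b > a + 2 + 3\varepsilon n$ gives $b - a - \varepsilon n > 2 + 2\varepsilon n > \varepsilon n$. If instead $b + x_0 > n$, then using $x_0 \leq a + 1 + \varepsilon n$ together with $a < n/2 - 2\varepsilon n$ I get
\[
|A| - (b + x_0 - n) \;>\; (b - a - \varepsilon n) - (b + x_0 - n) \;=\; n - a - x_0 - \varepsilon n \;>\; n - 2a - 1 - 2\varepsilon n \;>\; 2\varepsilon n - 1,
\]
which again exceeds $\varepsilon n$ for $n$ large enough that $\varepsilon n \geq 2$. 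In either case more than $\varepsilon n$ elements of $A + x_0$ lie in $[b+1,n]$, while $|[b+1,n] \setminus B| < \varepsilon n$, so some $y \in A$ satisfies $x_0 + y \in B$, giving $(A+A) \cap B \neq \varnothing$.

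There is no real obstacle here, only bookkeeping: the role of the hypothesis $a < n/2 - 2\varepsilon n$ is precisely to guarantee that even when the translate $x_0 + A$ overshoots $n$, the overshoot $b + x_0 - n$ is bounded by roughly $2\varepsilon n$, which is safely absorbed by the density of $A$ within $[a+1,b]$, and the hypothesis $b > a + 2 + 3\varepsilon n$ ensures $A$ itself is large enough in the first case.
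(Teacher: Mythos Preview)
Your proof is correct and follows essentially the same idea as the paper's: pick $x_0=\min A$ (the paper calls it $\alpha$) and argue by pigeonhole that some $x_0+y$ with $y\in A$ lands in $B$. The only organizational difference is that the paper, instead of splitting into the two cases $b+x_0\le n$ and $b+x_0>n$, fixes in advance a short window $J=[\alpha+1,\alpha+1+\lceil 2\varepsilon n\rceil]$ with $J\subseteq[a+1,b]$ and $\alpha+J\subseteq[b+1,n]$, and then pigeonholes inside $J$ (since $|J|\ge 2\varepsilon n$ while the two complements together have size $<2\varepsilon n$). This sidesteps your overshoot analysis, but the underlying mechanism is identical. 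Your caveat that one needs $\varepsilon n$ not too small is legitimate---the lemma is only applied inside Lemma~\ref{lem:mid4} where $n$ is large---and the paper's own argument has the same implicit dependence.
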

\begin{proof}
Let $\alpha$ be the smallest element in $A$, then $\alpha\leq a+\varepsilon n+1$. Let $J=[\alpha+1, \alpha-1+\lceil 2\varepsilon n\rceil]\subseteq [a+1,b]$. Observe that $\alpha+J\subseteq [b+1,n]$. Since $|J|=\lceil2\varepsilon n\rceil-1$,  $|[a+1,b]\setminus A|{\leq  \lceil\varepsilon n\rceil-1}$, $|[b+1,n]\setminus B|{\leq   \lceil\varepsilon n\rceil-1}$,  and $2\lceil\varepsilon n\rceil\leq  \lceil2\varepsilon n\rceil+1$, this implies that there is $\beta\in A\cap J$ such that $\alpha+\beta\in B$.
\end{proof}

The next lemma, Lemma~\ref{lem:mid4}, is similar to Lemma~\ref{lem:mid}, but here we consider the case when the number of colors is $4$. In order to obtain the same conclusion as in Lemma~\ref{lem:mid}, we further require that the size of $A$ is significantly smaller than $n$, since if $A$ is close to $[n]$, when we color all the elements in $A$ by two colors, the number of colorings we obtained is also close to the extremal case. Note that if we use the same proof as in Lemma~\ref{lem:mid} for $r=4$, equation (\ref{eq:mid}) does not give us the conclusion we want. Hence the proof of Lemma~\ref{lem:mid4} requires a more careful and complicated analysis of the structures of the containers.

\begin{lemma}\label{lem:mid4}
Let $0<\varepsilon<1/21$ and $A\subseteq [n]$ with $ \big(1/2+\varepsilon\big)n\leq |A|\leq (1-\varepsilon)n$. Then there is $\delta_2=\delta_2(\varepsilon)>0$ such that 
\[
g(A,4)\leq 4^{n/2-\delta_2n}.
\]
\end{lemma}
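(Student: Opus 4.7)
The plan is to adapt the container approach of Lemma~\ref{lem:mid}. Applying Theorem~\ref{container} and Green's removal lemma (Lemma~\ref{Stabilitylem2}), it suffices to prove, for a single good template $P$ of $A$, that $g(P,A)\leq 4^{n/2-\delta_2 n}$. As in Lemma~\ref{lem:mid}, partition $[n]\setminus E=X_1\sqcup X_2\sqcup X_3$ by palette size ($1,2,\geq 3$), set $Y_i:=X_i\cap A$ and $T:=X_2\cup X_3$, and use the crude bound $g(P,A)\leq 2^{|Y_2|}\cdot 4^{|Y_3|}\cdot 2^{o(n)}$, so the goal reduces to showing $|Y_2|+2|Y_3|\leq n-2\delta_2 n$. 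The principal difficulty, compared to the case $r\geq 5$, is that the factor $(1-2/\log r)$ appearing in estimate~(\ref{eq:mid}) of Lemma~\ref{lem:mid} vanishes at $r=4$, so the algebraic slack provided there is no longer available, and we must supply combinatorial slack from the hypothesis $|A|\leq(1-\varepsilon)n$ instead.

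The first case split is on $|Y_3|$. If $|Y_3|\leq (\varepsilon-2\delta_2)n$, then directly
\[
|Y_2|+2|Y_3|\leq |A|+|Y_3|\leq (1-\varepsilon)n+(\varepsilon-2\delta_2)n = n-2\delta_2 n,
\]
and we are done. Otherwise $|X_3|\geq|Y_3|>\varepsilon n/2$. The good-template condition (after Green's removal there are no rainbow Schur sub-templates) forces $\bigl((X_1\cup X_2)+X_3\bigr)\cap X_3=\varnothing$, since an $(X_i,X_3,X_3)$-triple with $i\in\{1,2\}$ always admits a rainbow sub-template. Hence Lemma~\ref{lem:str}, applied to $X_1\cup X_2$ (disjoint from $X_3$) and $B=X_3$ and using $|X_1\cup X_2|\geq |A|-|X_3|-o(n)\geq \varepsilon n/3$, forces $|X_3|\leq(1/2-c_0)n$ for some explicit $c_0=c_0(\varepsilon)>0$. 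Combined with the standard doubling bound $|T|\leq (n+m_0)/2$, where $m_0:=\min X_3$ (a consequence of $(X_3+T)\cap T=\varnothing$), we get
\[
|Y_2|+2|Y_3|\leq |T|+|X_3|\leq \frac{n+m_0}{2}+\left(\frac{1}{2}-c_0\right)n = n-c_0 n+\frac{m_0}{2},
\]
which handles the subcase $m_0\leq (2c_0-4\delta_2)n$.

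For the residual regime where $m_0$ is bounded below by a constant multiple of $n$, the plan is to sharpen either $|T|$ or $|X_3|$ via Lemma~\ref{lem:inter}. Setting $a=m_0$ and $b=2m_0-1$ (so that the hypotheses $3\varepsilon n<a<n/2-2\varepsilon n$ and $a+2+3\varepsilon n<b\leq 2a$ hold when $3\varepsilon n<m_0<n/2-2\varepsilon n$), and letting $A':=T\cap[a+1,b]$, $B':=X_3\cap[b+1,n]$, the constraint $(T+T)\cap X_3=\varnothing$ forces $(A'+A')\cap B'=\varnothing$. Lemma~\ref{lem:inter} then implies one of $|A'|,|B'|$ falls strictly below its trivial bound by a term of order $\varepsilon n$. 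This translates into either $|T|\leq n-\Theta(\varepsilon)n$ or $|X_3|\leq (1/2-\Theta(\varepsilon))n$; combining with $|Y_2|+|Y_3|\leq|A|\leq(1-\varepsilon)n$ then yields $|Y_2|+2|Y_3|\leq n-2\delta_2 n$ for an appropriate $\delta_2=\delta_2(\varepsilon)$.

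The hardest part will be the boundary range $m_0\approx n/2$, where Lemma~\ref{lem:inter}'s hypothesis $a<n/2-2\varepsilon n$ degenerates. There I would instead appeal to Lemma~\ref{Stabilitylem3}: since $X_3$ is sum-free with $m_0=\min X_3\geq|X_3|-O(\varepsilon n)$ is forced, $X_3$ is essentially either an upper interval $[m_0,n]$ or a set of odd residues. In either structural case, the $\varepsilon n$ missing elements of $[n]\setminus A$ (necessarily distributed either inside $[1,m_0-1]$ or across one parity class) can be used to bound $|Y_3|=|X_3\cap A|$ strictly below $|X_3|$ by a short parity/translation argument, which together with the previous cases closes the full range of $m_0$ with a single choice of $\delta_2=\delta_2(\varepsilon)>0$.
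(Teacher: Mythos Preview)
Your setup (containers, Green removal, the partition $X_1,X_2,X_3$) matches the paper, and the first case $|Y_3|\leq(\varepsilon-2\delta_2)n$ is fine. The application of Lemma~\ref{lem:str} to get $|X_3|\leq(1/2-c_0)n$ is also correct. The gap is in everything that follows.

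Once $m_0>2c_0 n$, your inequality $|Y_2|+2|Y_3|\leq|T|+|X_3|\leq(n+m_0)/2+(1/2-c_0)n$ no longer gives anything below $n$, and your proposed repairs do not work. For the middle range you invoke Lemma~\ref{lem:inter} with $a=m_0$, $b=2m_0-1$; its contrapositive says $T$ misses $\varepsilon n$ elements of $[m_0+1,2m_0-1]$ or $X_3$ misses $\varepsilon n$ elements of $[2m_0,n]$. Neither statement yields a \emph{global} improvement of the shape you assert: you already have $|X_3|\leq(1/2-c_0)n$, and a local deficit of $T$ on $[m_0+1,2m_0-1]$ says nothing about $|T|$ elsewhere. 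Your fallback, ``combining with $|Y_2|+|Y_3|\leq(1-\varepsilon)n$'', only gives $|Y_2|+2|Y_3|\leq(1-\varepsilon)n+|Y_3|$, so you would need $|Y_3|\leq(\varepsilon-2\delta_2)n$ --- exactly the case already handled. For the boundary $m_0\approx n/2$, your claim that the $\varepsilon n$ missing elements of $[n]\setminus A$ force $|Y_3|<|X_3|$ is simply false: if $X_3$ is an upper interval $[m_0,n]$, all missing elements of $A$ may lie in $[1,m_0-1]$, leaving $Y_3=X_3$.

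The paper's argument is organized around $m=\max X_3$ rather than $\min X_3$. From $(T+T)\cap X_3=\varnothing$ one gets $|T|\leq n-\lceil(m-1)/2\rceil$ and $x_3\leq m-\lceil(m-1)/2\rceil$, which already sum to $n+O(1)$; a slack of $\varepsilon n/1000$ in either finishes the proof. The entire substance of Lemma~\ref{lem:mid4} is to rule out near-equality in both. Writing $m=n-\alpha$, the paper first uses Lemma~\ref{lem:str} together with $x_1+x_2\geq\varepsilon n/2$ to force $\alpha>\varepsilon n/40$, and uses $|T|\leq|A|\leq(1-\varepsilon)n$ to force $m\geq 3\varepsilon n/2$. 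It then partitions $[n]$ into $J_1=[m+1,n]$, $J_2=[1,\alpha]$, $J_3=[\alpha+1,m]$, and through a careful iterated application of Lemma~\ref{lem:inter} and Lemma~\ref{Stabilitylem3} squeezes $\alpha$ successively below $n/3$, then below $n/5$, each time exploiting that near-tightness pins down where $X_2$ and $X_3$ must sit. The contradiction is reached only at the end of this chain; there is no single clean inequality of the type you are looking for.
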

\begin{proof}
Let $\mathcal{C}$ be the collection of containers obtained from Theorem~\ref{container}, and $P\in\mathcal{C}$ be a good $r$-template of $A$. 
Similarly to what we did in the proof of Lemma~\ref{lem:mid}, applying Lemma~\ref{Stabilitylem2} to $P$, we obtain a template $P':[n]\setminus E\to 2^{[r]}$, such that $P\mid_{[n]\setminus E}=P'$, $|E|=o(n)$, and  $RS(P')=0$.
We now partition $A\setminus E$ into three sets:
let $X_1=\{a\in A\setminus E : |P'(a)|=1\}$, $X_2=\{a\in A\setminus E :  |P'(a)|=2\}$, and $X_3=\{a\in A\setminus E :  |P'(a)|\geq 3\}$.  We use $T$ to denote the set $X_2\cup X_3$, and write  $x_i=|X_i|$ for $i=1,2,3$.
Similarly to (\ref{eq:T}), we have
\begin{equation}\label{eq:T:lem44}
(X_3+X_3)\cap X_3=\varnothing,\quad (T+T)\cap X_3=\varnothing,\quad (X_3+T)\cap T=\varnothing;
\end{equation}
moreover, $X_3$ is restricted sum-free with size 
\begin{equation}\label{eq:x3ub:lem44}
x_3\leq \lfloor n/2 \rfloor+1.
\end{equation}
Since $|A|\geq(1/2+\varepsilon)n$, and $P$ is a good template, the upper bound on $x_3$ implies that 
\begin{equation}\label{eq:x12bound:lem44}
x_1+x_2\geq \varepsilon n/2. 
\end{equation}
Let $m$ be the largest element in $X_3$, and similarly to~(\ref{eq:P3}) we have
$|T|\leq n-\lceil(m-1)/2\rceil$ and $x_3\leq m-\lceil(m-1)/2\rceil.$

We claim that it suffices to show 
\begin{equation}\label{eq:mainclaim:lem44}
|T|\leq n-\Big\lceil\frac{m-1}{2}\Big\rceil-\frac{\varepsilon n}{2000},\quad\text{ or }\quad x_3\leq m-\Big\lceil\frac{m-1}{2}\Big\rceil-\frac{\varepsilon n}{2000}.
\end{equation}
Once we have~\eqref{eq:mainclaim:lem44}, by Theorem~\ref{container} we immediately have
\begin{align*}
 g(\mathcal{C},A)\leq 2^{cn^{2/3}\log^2n}r^{|E|}2^{x_2}r^{x_3}=r^{o(n)+\frac{1}{2}(|T|+x_3)}\leq r^{\frac{n}{2}+o(n)-\frac{\varepsilon n}{2000}}<r^{\frac{n}{2}-\delta_2n},
\end{align*}
where we take $\delta_2=\varepsilon/3000.$ 

Assume by contradiction that
\begin{equation}\label{eq:P3r=4}
    n-\Big\lceil\frac{m-1}{2}\Big\rceil-\frac{\varepsilon n}{2000}< |T|\leq n-\Big\lceil\frac{m-1}{2}\Big\rceil,\text{ and }\ m-\Big\lceil\frac{m-1}{2}\Big\rceil-\frac{\varepsilon n}{2000}< x_3\leq m-\Big\lceil\frac{m-1}{2}\Big\rceil.
\end{equation}
For simplicity, let $\alpha=n-m$ (recall that $m$ is the largest element in $X_3$).
If $\alpha\leq\varepsilon n/40$, then \eqref{eq:P3r=4} leads to $|X_3|\geq (1/2 - \varepsilon)n$.
Thus by \eqref{eq:x12bound:lem44} and Lemma~\ref{lem:str}, at least one of 
\[
(X_1+X_3)\cap X_3\neq\varnothing, \quad (X_2+X_3)\cap X_3\neq\varnothing,
\]
is true, which contradicts the fact that $RS(P')=0$.
Therefore we may assume that 
\begin{equation}\label{eq:alphalb:lem44}
\alpha >\varepsilon n/40. 
\end{equation}
On the other hand, as $|A|\leq (1-\varepsilon)n$, the lower bound on $|T|$ in (\ref{eq:P3r=4}) gives us
\[
n-\Big\lceil\frac{m-1}{2}\Big\rceil-\frac{\varepsilon n}{2000}<(1-\varepsilon)n
\]
and hence $m> 3\varepsilon n/2$.

We  now partition $[n]$ into three intervals $J_1,J_2,J_3$, such that 
$$J_1=[1,\alpha],\quad J_2=[\alpha+1,n-\alpha],\quad \text{ and } J_3=[n-
\alpha+1,n].$$
By the definition of $\alpha$, we see that $X_3\subseteq J_1\cup J_2$. 
Note that $(T+X_3)\cap X_3=\varnothing$ and $n-\alpha= m\in X_3$. 
Then for every $1\leq k<m$, at most one of the elements in $\{k,m-k\}$ is in $T\cap(J_1\cup J_2)$, and thus $|T\cap(J_1\cup J_2)|\leq \lceil m/2\rceil$. Then by~\eqref{eq:P3r=4}, we have
\begin{equation}\label{eq:P2r=4}
|J_3\cap X_2|=|J_3\cap T|=|T| - |T\cap(J_1\cup J_2)|\geq  \alpha-\frac{\varepsilon n}{1000}. 
\end{equation}

The rest of the proof will be processed by the following six steps.\\
\newline
\noindent\textbf{Step 1: show $|J_1\cap X_3|< dn$, where $d=\varepsilon/400$. }
\begin{proof}
Assume by contradiction that $|J_1\cap X_3|\geq dn$, then we can find $\beta\in X_3$ such that $dn/2\leq \beta\leq \alpha-dn/2$. We now consider the subinterval 
$$
J_3'=\left[n-\alpha+1,n-\alpha+\frac{dn}{2}\right]\subseteq J_3.
$$ 
Note that by the choice of $\beta$, we must have $(J_3'+\beta)\cap J_3'=\varnothing$, and $J_3'+\beta\subseteq J_3$.
Since $(X_2+X_3)\cap X_2=\varnothing$, for every element $t\in J_3'$, at least one element in the pair $\{t, t+\beta\}$ is not contained in $X_2$. Thus we have that 
$$
|J_3\cap X_2|\leq |J_3| - |J'_3|=\alpha-\frac{dn}{2}=\alpha-\frac{\varepsilon n}{800},
$$
which contradicts (\ref{eq:P2r=4}). 
\end{proof}
Now using (\ref{eq:P3r=4}) and Step 1, we get
\begin{equation}\label{eq: bound on J_2 cap X_3}
|J_2\cap X_3|=x_3-|J_1\cap X_3| \geq \frac{n-\alpha}{2}-\frac{\varepsilon n}{1000}-dn\geq \frac{n-\alpha}{2} - \frac{\varepsilon n}{250}.
\end{equation}

\noindent\textbf{Step 2: show $\alpha\leq n/3.$}
\begin{proof}
Again, assume by contradiction that $\alpha>n/3$. Then \eqref{eq: bound on J_2 cap X_3} implies 
$$
|J_2\cap X_3|
\geq (n-2\alpha) -\frac{\varepsilon n}{250}.$$ 
Applying (\ref{eq:P2r=4}) and Lemma~\ref{lem:inter} (with $A=X_3
\cap J_2$, $B=X_2
\cap J_3$, $a=\alpha$, and $b=n-\alpha$), we get $(X_3+X_3)\cap X_2\neq\varnothing$, which contradicts $RS(P')=0$.
\end{proof}
We then study the structure of $X_3$ inside $J_2$. 
Let $J_2':=[n-2\alpha +1,n-\alpha]$. Note that by Step 2, $J_2'\subseteq J_2$. \\
\newline
\noindent\textbf{Step 3: show $|J_2'\setminus X_3|\leq dn+\varepsilon n/800=3\varepsilon n/800.$}
\begin{proof}
Observe that at least one of $\{t,m-t\}$ is not in $X_3$; then by Step 2 and the range of $J_2\setminus J_2'$, we have $|X_3\cap (J_2\setminus J_2')|\leq(n-3\alpha)/2$. If $|J_2'\setminus X_3|> dn+\varepsilon n/800$, then together with Step 1,
\[
x_3\leq \frac{n-3\alpha}{2}+dn+\alpha-\Big(dn+\frac{\varepsilon n}{800}\Big)\leq \frac{m}{2}-\frac{\varepsilon n}{800},
\]
which contradicts (\ref{eq:P3r=4}). 
\end{proof}

\noindent\textbf{Step 4: show $|(J_1+\alpha)\cap X_3|\leq d'n$, where $d'=\varepsilon/60$.}
\begin{proof}
Assume by contradiction that $|(J_1+\alpha)\cap X_3|\geq d'n$. Then there is $\gamma\in X_3$ such that $\alpha+d'n/2\leq \gamma\leq 2\alpha-d'n/2$. Define
\[
J_2'':=\left[n-2\alpha +1,n-2\alpha+\frac{d'n}{2}\right] \stackrel{\eqref{eq:alphalb:lem44}}{\subseteq} J_2'.
\]
Note that as $\gamma\in X_3$, we have $(X_2\cap J''_2+\gamma)\cap X_2=\varnothing$; moreover, observe that $\gamma+J_2''\subseteq J_3$. Then we have $|X_2\cap J_3|\leq |J_3| - |X_2\cap J''_2|$, and thus
\[
|X_2\cap J_3|+ |X_2\cap J'_2| \leq |X_2\cap J_3|+ |X_2\cap J''_2| + \alpha - \frac{d'n}{2} \leq 2\alpha - \frac{d'n}{2}.
\]
Then at least one of $|X_3\cap J_2'|$ and $|X_2\cap J_3|$ is less than $\alpha-d'n/4=\alpha-\varepsilon n/240$, which contradicts either (\ref{eq:P2r=4}) or Step 3. 
\end{proof}

\noindent\textbf{Step 5: show that
\begin{equation}\label{eq: bound on X3 cap mid interval}
\frac{n-3\alpha}{2}-\frac{\varepsilon n}{1000}\leq \big|X_{3}\cap [\alpha+1,n-2\alpha]\big|\leq n-4\alpha+d'n.
\end{equation}
}
\begin{proof}
Note that $J_2'$ can be rewritten as $J_2'=\{m\}\cup (m-J_1\setminus\{\alpha\})$. Since $X_3$ iteself is restricted sum-free, for any $a\in J_1\setminus\{\alpha\}$, at least one of $\{a, m-a\}$ is not in $X_3$, and therefore $|X_3\cap (J_1\cup J_2')|\leq |J_1\cup J_2'| -( |J_1|-1)\leq\alpha+1$. 
Then the lower bound immediately follows from~\eqref{eq:P3r=4}. 

Recall from Step 4 that $|X_3\cap (J_1+\alpha)|\leq d'n$. To see the upper bound, it suffices to prove that  $J_1+\alpha=[\alpha+1, 2\alpha]\subseteq [\alpha+1, n-2\alpha]$.
Indeed, if not, we would have $|X_3\cap [\alpha+1,n-2\alpha]|\leq d'n $, and together with the lower bound of~\eqref{eq: bound on X3 cap mid interval}, we conclude that 
\[
\alpha\geq \frac{n}{3}-\frac{\varepsilon n}{1500}-\frac{2d'n}{3}. 
\]
Then combining with Step 1 and Step 4, we have
\begin{align*}
x_3&= |X_3\cap J_1|+ |X_3\cap (J_1+\alpha)|+|X_3\cap [2\alpha+1, n-\alpha]|\\
&\leq dn+d'n+n-3\alpha <\frac{11\varepsilon n}{200},
\end{align*}
 which is a contradiction to~\eqref{eq:P3r=4}.
Note that when $\varepsilon<1/4$, by \eqref{eq:P3r=4} and Step 2, i.e., $\alpha\leq n/3$, we always have
\[
x_3\geq \frac{n-\alpha}{2}-\frac{\varepsilon n}{2000}\geq \frac{n}{3}-\frac{\varepsilon n}{2000}\geq  \frac{\varepsilon n}{2} > \frac{11\varepsilon n}{200},
\]
and this is a contradiction. 
\end{proof}

Comparing the upper and lower bounds in \eqref{eq: bound on X3 cap mid interval}, we conclude that \begin{equation}\label{eq: n is at least 5 alpha}
    \alpha\leq\frac{n}{5}+\frac{2d'n}{5}+\frac{\varepsilon n}{2500}.
\end{equation}
\noindent\textbf{Step 6: show $\alpha< n/5-\varepsilon n/1000$.}
\begin{proof}
Suppose that $\alpha\geq n/5-\varepsilon n/1000$. First by \eqref{eq:P3r=4}, Step 1 (i.e., $|X_3\cap J_1|\leq dn$), and Step 4 (i.e., $|X_3\cap (J_1+\alpha)|\leq d'n$), we have that 
$$
|X_3\cap (J_2\setminus (J_1+\alpha))|\geq \frac{n-\alpha}{2}-dn-d'n-\frac{\varepsilon n}{2000},
$$
which then implies
\begin{align*}
|[2\alpha +1,n-\alpha]\setminus X_3|&\leq n-3\alpha - \frac{n-\alpha}{2}+dn+d'n+\frac{\varepsilon n}{2000}\\
&\leq dn+d'n+\frac{3\varepsilon n}{1000}. 
\end{align*}
In particular, we have
\begin{equation}\label{cap-X_3}
\Big|\Big[2\alpha +\frac{3\varepsilon n}{1000},n-\alpha\Big]\cap X_3\Big|\geq n-3\alpha-\frac{3\varepsilon n}{1000}-\Big(d+d'+\frac{3\varepsilon}{1000}\Big)n. 
\end{equation}
As we assume $\alpha\geq n/5-\varepsilon n/1000$, we get
\begin{equation}\label{b<2a}
n-\alpha \leq 4\alpha +\frac{5\varepsilon n}{1000}\leq  
2\Big(2\alpha +\frac{3\varepsilon n}{1000}-1\Big). 
\end{equation}
Next we intend to apply
Lemma~\ref{lem:inter} with  $a=2\alpha+\frac{3\varepsilon n}{1000}-1$, $b=n-\alpha$, $A=X_3
\cap [2\alpha + \frac{3\varepsilon n}{1000}, n-\alpha]$ and $B=X_2\cap J_3$. Now we check the conditions stated in Lemma~\ref{lem:inter}.
The condition $3\varepsilon n<a< n/2-2\varepsilon n$ follows from \eqref{eq: n is at least 5 alpha} and the lower bound on $\alpha$, and $b\le 2a$ follows from \eqref{b<2a}. By  \eqref{eq:P2r=4} and \eqref{cap-X_3}, the sets $A$ and $B$ also satisfy the conditions stated in Lemma~\ref{lem:inter}. 
Thus, we obtain that $(X_3+X_3)\cap X_2\neq\varnothing$, and this contradicts (\ref{eq:T:lem44}).
\end{proof}

Finally, from~\eqref{eq:alphalb:lem44} and Step 6, we have
\[
\frac{\varepsilon n}{40}< \alpha<\frac{n}{5}-\frac{\varepsilon n}{1000}.
\]
By (\ref{eq:P3r=4}), we have that $x_3>2n/5$. By Lemma~\ref{Stabilitylem3}, either $X_3$ consists of odd integers, or the minimum element in $X_3$ is at least $\frac{n-\alpha}{2}-\frac{\varepsilon n}{1000}$. The first case is impossible, as otherwise we have
\[
\frac{n-\alpha}{2}-\frac{\varepsilon n}{2000}<x_3=|X_3\cap J_1|+ |X_3\cap J_2|\leq dn+\frac{n-2\alpha}{2},
\]
that is, $\alpha<\frac{\varepsilon n}{200}+\frac{\varepsilon n}{1000}$, which contradicts \eqref{eq:alphalb:lem44}. Now, we assume $a\in X_3$ is the minimum element, i.e., $X_3\subseteq[a, n-\alpha]$, and $a\geq \frac{n-\alpha}{2}-\frac{\varepsilon n}{1000}$. Together with (\ref{eq:P3r=4}), we have
 $$\left|\left[\frac{n-\alpha}{2}+1,n-\alpha\right]\setminus X_3\right|\leq\frac{\varepsilon n}{500}.$$ Using the fact that $\frac{n-\alpha}{2}\leq\frac{n}{2}-\frac{\varepsilon n}{80}$, by (\ref{eq:P2r=4}) and Lemma~\ref{lem:inter} (applied to sets $X_3
\cap [\frac{n-\alpha}{2}+1,n-\alpha]$ and $X_2
\cap J_3$), we have $(X_3+X_3)\cap X_2\neq\varnothing$, which contradicts (\ref{eq:T:lem44}).
\end{proof}

The final lemma considers the case when $A$ contains many Schur triples. 
\begin{lemma}\label{lem:manyschur}
Let $r\geq4$ be an integer. Suppose there is $\mu>0$, such that $s(A)\geq\mu n^2$. Then 
\[g(A,r)\leq r^{|A|-\frac{3(2\log r-\log (3r-2))}{2\log r}\mu n}.
\]
\end{lemma}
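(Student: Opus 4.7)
My plan is to find a large matching of pairwise vertex-disjoint Schur triples in $A$ and combine it with a trivial per-triple counting bound. The number of non-rainbow $r$-colorings of any fixed $3$-element set is $r^{3}-r(r-1)(r-2)=r(3r-2)$. So if $M$ is a collection of $k$ pairwise vertex-disjoint Schur triples inside $A$, then every rainbow sum-free $r$-coloring of $A$ is forced to be non-rainbow on each $T\in M$, while on the $|A|-3k$ remaining vertices it is free. Therefore
\[
g(A,r)\ \leq\ \bigl(r(3r-2)\bigr)^{k}\cdot r^{|A|-3k}\ =\ r^{|A|}\Bigl(\tfrac{3r-2}{r^{2}}\Bigr)^{k}.
\]
Since $\bigl(\tfrac{3r-2}{r^{2}}\bigr)^{3\mu n/2}=r^{-\frac{3(2\log r-\log(3r-2))}{2\log r}\mu n}$, it is enough to exhibit a vertex-disjoint matching of size at least $(1-o(1))\tfrac{3\mu n}{2}$ inside the Schur-triple hypergraph of $A$.

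Two structural features of the $3$-uniform Schur-triple hypergraph $H$ on $A$ drive the matching argument: (i) $e(H)=s(A)\geq\mu n^{2}$, so that the average vertex degree in $H$ is at least $3\mu n$; and (ii) the codegree of $H$ is at most $2$, since any pair $\{x,y\}\subseteq A$ with $x<y$ lies in at most the two triples $\{x,y,x+y\}$ and (exactly one of) $\{y-x,x,y\}$ or $\{x,y-x,y\}$, depending on whether $y<2x$ or $y>2x$. I would first delete an $o(n)$-size set of atypically high-degree vertices (these are essentially the small elements of $A$, whose degrees in $H$ can be as large as $n$) so that what remains is a nearly-regular sub-hypergraph with codegree $\leq 2$ and degree close to the average $3\mu n$.

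Once this trimming is done, I would invoke a Pippenger--Spencer / Frankl--R\"odl style near-perfect matching theorem for hypergraphs with small codegree: it gives a matching whose size is $(1-o(1))$ times the fractional matching number of the trimmed sub-hypergraph, which is in turn at least $\tfrac{3\mu n}{2}$ thanks to the average degree and codegree bounds. Plugging this matching back into the display above absorbs the $o(n)$ slack into the exponent and yields
\[
g(A,r)\leq r^{|A|-\frac{3(2\log r-\log(3r-2))}{2\log r}\mu n},
\]
as required. The principal obstacle is precisely the matching-size assertion: a naive greedy argument only delivers a matching of size $\Omega(\mu n)$, which is a constant factor short of the target $\tfrac{3\mu n}{2}$, and it is the trimming-plus-near-perfect-matching step that bridges the gap; the rest of the argument is bookkeeping.
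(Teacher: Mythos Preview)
Your counting step is fine: a vertex-disjoint family of $k$ Schur triples indeed gives $g(A,r)\le r^{|A|}\bigl(\tfrac{3r-2}{r^{2}}\bigr)^{k}$, and you have correctly identified that $k\ge \tfrac{3\mu n}{2}$ is what is needed. The gap is in the matching step. A matching of $k$ vertex-disjoint triples covers $3k$ elements of $A$, so necessarily $k\le |A|/3\le n/3$. But the lemma must hold for every $\mu$ with $s(A)\ge \mu n^{2}$; taking $A=[n]$ one has $s(A)\sim n^{2}/4$, so $\mu$ may be as large as (essentially) $1/4$, and then $\tfrac{3\mu n}{2}=\tfrac{3n}{8}>n/3$. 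No matching of the required size exists, regardless of any Pippenger--Spencer or Frankl--R\"odl machinery; the obstruction is the trivial vertex-count, not regularity. (Even for small $\mu$ your trimming sketch is not enough: you would only get $(1-o(1))\tfrac{3\mu n}{2}$, which leaves an $o(n)$ slack in the exponent that the stated inequality does not allow.)

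The paper sidesteps this by \emph{not} insisting on fully vertex-disjoint triples. By pigeonhole there is a single element $t\in A$ with $s(t,A)\ge 3s(A)/|A|\ge 3\mu n$. The link graph $L_{t}(A)$, whose edges are the pairs $\{a,b\}$ with $\{a,b,t\}\in\mathcal{S}(A)$, is a simple graph of maximum degree at most $2$ on $A\setminus\{t\}$; any such graph has a matching of size at least half its edge count, so one gets a matching $M$ of $k\ge \tfrac{3\mu n}{2}$ pairs. These $k$ triples all share $t$, so they occupy only $2k+1$ vertices (hence $k$ can be as large as $\lfloor(|A|-1)/2\rfloor$, removing the $n/3$ barrier). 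After colouring $t$ and the $|A|-2k-1$ free elements arbitrarily, each pair $\{a,b\}\in M$ has exactly $r+2(r-1)=3r-2$ admissible colourings, giving
\[
g(A,r)\le r\cdot r^{|A|-2k-1}(3r-2)^{k}=r^{|A|}\Bigl(\frac{3r-2}{r^{2}}\Bigr)^{k}\le r^{|A|}\Bigl(\frac{3r-2}{r^{2}}\Bigr)^{3\mu n/2},
\]
which is the claimed bound, with no $o(n)$ loss. The moral: sharing one vertex across all triples buys you a graph-matching problem (max degree $2$, hence trivial) in place of a hypergraph-matching problem, and it is precisely this that makes the constant $\tfrac{3}{2}$ attainable.
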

\begin{proof}
Since $s(A)\geq\mu n^2$, by pigeonhole principle, there is $t\in A$, such that 
\[
s(t,A)\geq \frac{3\mu n^2}{|A|}\geq 3\mu n.
\]

Now we consider the possible number of rainbow sum-free $r$-colorings of $A$.
Recalling the link graph  $L_t(A)$,  we have $k(t,A)\ge 3\mu n/2$ by (\ref{ineq:mat}). We first fix a maximum matching $M$ of $L_t(A)$. For the elements in $A\setminus V(M)$, we color them arbitrarily. 
For each edge $ab\in E(M)$, in order to avoid a rainbow Schur triple, we either let $a, b$ share the same color, or color one of $a, b$ by the color of $t$, and color the other vertex by a different color.
In this way, $a, b$ have exactly $r+2(r-1)$ effective colorings. Let $k=k(t, A)$. Hence, we have 
\begin{align*}
    g(A,r)&\leq r^{|A|-2k-1}r(3r-2)^k
    \leq r^{|A|}\Big(\frac{3r-2}{r^2}\Big)^{\frac{3\mu n}{2}}=r^{|A|-\frac{3(2\log r-\log (3r-2))}{2\log r}\mu n},
\end{align*}
as desired.
\end{proof}
Now we can prove the stability theorem.
\begin{proof}[{\bf Proof of Theorem \ref{thm:sta}}]
The first part of the statement, that $g(n,r)\leq r^{n/2+o(n)}$, follows easily from the fact $g(A,r)\leq r^{|A|}$ when $|A|\leq n/2+o(n)$. If $|A|\geq n/2+\eta n$ for some constant $\eta$, the result follows from Lemma~\ref{lem:mid} (ii) when $r\geq5$. For the case $r=4$, after applying Lemma~\ref{lem:mid4} we still have one extra case that $|A|\geq (1-\eta)n$, and this follows from Theorem~\ref{thm: hrange}.

For the second part of the statement, we will prove it by contrapositive. Let $c=\frac{3(2\log r-\log(3r-2))}{2\log r}$, clearly $c>0$ when $r\geq4$. Let $\mu$ be the value of $\delta(\frac{\varepsilon}{20})$ given in Lemma~\ref{Stabilitylem2}, and let  $\varepsilon'=\min\{\frac{c\mu}{2},\varepsilon\}$. We first consider $r\geq5$, and suppose that we have both
\begin{equation}\label{eq:r=5}
    |A\triangle O|>\varepsilon n, \quad\text{ and }\quad |A\triangle I_0|>\varepsilon n.
\end{equation}
In this case we take $\delta=\min\{\delta_1(\varepsilon'),\varepsilon',\frac{\varepsilon}{20}\}$, where $\delta_1(\varepsilon')$ is given in Lemma~\ref{lem:mid} (ii).
If $|A|\geq \frac{n}{2}+\varepsilon' n$, we apply Lemma~\ref{lem:mid} (ii) with parameter $\varepsilon'$, then we obtain that $g(A,r)\leq r^{n/2-\delta n}$. Thus we may assume that $|A|\leq (1/2+\varepsilon')n$. If $s(A)\geq\mu n^2$, applying Lemma~\ref{lem:manyschur}, we have
\[
g(A,r)\leq r^{n/2+\varepsilon'n-c\mu n}\leq r^{n/2-\varepsilon' n}\leq r^{n/2-\delta n}.
\]
Finally, we have $s(A)<\mu n^2$. By Lemma~\ref{Stabilitylem2}, we get the partition $A=B\cup C$, where $B$ is sum-free and $|C|<\frac{\varepsilon}{20} n$. Note that we may assume $|A|\geq\frac{n}{2}-\frac{\varepsilon}{20} n$, otherwise $g(A,r)\leq r^{|A|}\leq r^{n/2-\delta n}$. Now we have $$|B|\geq |A|-|C|\geq \frac{n}{2}-\frac{\varepsilon n}{10}\geq\frac{2n}{5}$$ since $\varepsilon\leq1$. We apply Lemma~\ref{Stabilitylem3} to $B$. Hence either $B$ contains only odd integers, or the minimum element of $B$ is at least $|B|$. Suppose $B$ consists of odd integers. Thus
\[
|A\triangle O|\leq |C|+|O\setminus B|\leq \frac{\varepsilon}{20} n+\frac{\varepsilon}{10} n<\varepsilon n,
\]
contradicts (\ref{eq:r=5}). Thus, let $a$ be the minimum element in $B$, then $a\geq \frac{n}{2}-\frac{\varepsilon n}{10}$. Therefore,
\[
|A\triangle I_0|\leq |C|+|B\triangle I_0|\leq\frac{\varepsilon}{20}n+\frac{\varepsilon}{10}n+\frac{\varepsilon}{5}n<\varepsilon n,
\]
which also contradicts (\ref{eq:r=5}).

Next, let us consider the case when $r=4$. Besides (\ref{eq:r=5}), we further require\begin{equation}\label{eq:r=4}
    |A\triangle [n]|>\varepsilon n.
\end{equation}
We now take $\delta=\min\{\delta_2(\varepsilon'),\varepsilon', \frac{\varepsilon}{20}\}$, where $\delta_2(\varepsilon')$ is given in Lemma~\ref{lem:mid4}. The case when $|A|\leq \frac{n}{2}+\varepsilon'n$ is same as when $r\geq5$. When $\frac{n}{2}+\varepsilon'n\leq |A|\leq n-\varepsilon'n$, by applying Lemma~\ref{lem:mid4} we get $g(A,4)\leq4^{n/2-\delta n}$. When $|A|\geq n-\varepsilon' n$, we get $|A\triangle [n]|\leq \varepsilon' n\leq\varepsilon n$, which contradicts (\ref{eq:r=4}).
\end{proof}

\section{Proof of Theorem~\ref{thm: r8}}

\begin{prop}\label{prop:larmat}
Let $n, r, c \in \mathbb{N}$ with $r\geq 8$ and $c>1$. Suppose that $A$ is a subset of $[n]$ of size $\lceil n/2\rceil+c$.\smallskip
\begin{compactenum}[\rm (i)]
  \item If there exists an element $t\in A$ such that $k(t, A)\geq 2(c-1)$, then we have $g(A,r)< r^{\lceil n/2\rceil+1}$.\smallskip
  \item If there exists an element $t\in A$ such that $k(t, A)\geq 2(c-1)+1$, then we have $g(A,r)< r^{\lceil n/2\rceil}\left(3-2/r\right)$.
\end{compactenum}
\end{prop}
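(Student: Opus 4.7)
The plan is to count rainbow sum-free $r$-colorings of $A$ by exploiting the matching in the link graph. Fix an element $t\in A$ as in the hypothesis, and fix a matching $M\subseteq E(L_t(A))$ of size $k=k(t,A)$. For every edge $xy\in M$, the triple $\{t,x,y\}$ is a restricted Schur triple, and so in any rainbow sum-free coloring the colors of $t$, $x$, $y$ cannot all be distinct. Conditioning on the color of $t$, for each pair $(x,y)\in M$ the number of valid colorings of $(x,y)$ is $r^2-(r-1)(r-2)=3r-2$ (total minus the rainbow ones). Since the pairs in $M$ are pairwise disjoint and disjoint from $t$, and since the remaining $|A|-1-2k$ elements can each be colored freely, we obtain the upper bound
\[
g(A,r)\ \le\ r\cdot(3r-2)^{k}\cdot r^{|A|-1-2k}\ =\ r^{|A|-2k}(3r-2)^{k}.
\]

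For part (i), observe that the function $\phi(k):=r^{|A|-2k}(3r-2)^{k}$ is decreasing in $k$ because $(3r-2)/r^{2}<1$, so it suffices to verify the inequality at $k=2(c-1)$. Plugging $|A|=\lceil n/2\rceil+c$ and $k=2(c-1)$ gives
\[
\phi(2c-2)\ =\ r^{\lceil n/2\rceil-3c+4}(3r-2)^{2c-2},
\]
and dividing by $r^{\lceil n/2\rceil+1}$ reduces the target bound to
\[
\left(\frac{(3r-2)^{2}}{r^{3}}\right)^{c-1}<1.
\]
Since $c-1\ge 1$, this follows from the numerical fact that $(3r-2)^{2}<r^{3}$ for every $r\ge 8$ (indeed $22^{2}=484<512=8^{3}$, and for $r\ge 9$ one has $(3r-2)^{2}<9r^{2}\le r^{3}$). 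This step is exactly where the hypothesis $r\ge 8$ is tight.

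For part (ii), the argument is the same with $k\ge 2(c-1)+1$. Using monotonicity of $\phi$ again, it suffices to check
\[
\phi(2c-1)\ =\ r^{\lceil n/2\rceil-3c+2}(3r-2)^{2c-1}\ <\ r^{\lceil n/2\rceil}\!\left(3-\tfrac{2}{r}\right)\ =\ r^{\lceil n/2\rceil-1}(3r-2),
\]
which after cancellation is again equivalent to $\left((3r-2)^{2}/r^{3}\right)^{c-1}<1$, and hence holds for $r\ge 8$ and $c\ge 2$.

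There is essentially no obstacle beyond the careful accounting in the first display and the numerical verification $(3r-2)^{2}<r^{3}$; the only point needing care is that when counting colorings of $\{t\}\cup V(M)$, the pairs in $M$ are disjoint, so the $3r-2$ choices for different pairs are truly independent given the color of $t$, making the product bound valid.
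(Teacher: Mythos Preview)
Your proof is correct and follows essentially the same approach as the paper: both establish the key bound $g(A,r)\le r^{|A|-2k}(3r-2)^k$ via the matching in the link graph and then reduce the desired inequalities to $\bigl((3r-2)^2/r^3\bigr)^{c-1}<1$ for $r\ge 8$. The only cosmetic difference is that the paper records this last numerical fact up front and references it, whereas you verify it inline.
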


\begin{proof}
First note that for $r\geq 8$ we have 
\begin{equation}\label{eq:estr}
\frac{(3r-2)^2}{r^3}<1.
\end{equation}
Let $k=k(t, A)$. Similarly as in the proof of Lemma~\ref{lem:manyschur}, we obtain that
$$g(A,r)\leq r^{\lceil n/2\rceil+c-2k}(3r-2)^k
=r^{\lceil n/2\rceil+c}\left(\frac{3r-2}{r^2}\right)^{k}.$$
For $k\geq 2(c-1)$, we have
$$
g(A,r)\leq r^{\lceil n/2\rceil+c}\left(\frac{3r-2}{r^2}\right)^{2(c-1)}
=r^{\lceil n/2\rceil+1}\left(\frac{(3r-2)^2}{r^3}\right)^{c-1} 
< r^{\lceil n/2\rceil+1},
$$
where the last inequality follows from (\ref{eq:estr}) and $c>1$.

Similarly, for $k\geq 2(c-1)+1$, we have
\[
\begin{split}
g(A,r)&\leq r^{\lceil n/2\rceil} \left(3-\frac{2}{r}\right) r^{c-1}\left(\frac{3r-2}{r^2} \right)^{k-1}
\leq r^{\lceil n/2\rceil} \left(3-\frac{2}{r}\right)r^{c-1}\left(\frac{3r-2}{r^2} \right)^{2(c-1)}\\
&= r^{\lceil n/2\rceil} \left(3-\frac{2}{r}\right)\left(\frac{(3r-2)^2}{r^3}\right)^{c-1}
<  r^{\lceil n/2\rceil} \left(3-\frac{2}{r}\right).
\end{split}
\]
Together with the previous inequality, this completes the proof.
\end{proof}

\begin{lemma}\label{lem:staodd}
Let $r\geq 8$, $0<\varepsilon \leq 1/36$, and $A$ be a subset of $[n]$ of size $\lceil n/2\rceil + c$, where $1 < c \leq \varepsilon n$. Suppose that there exists a partition $A=B\cup C$ such that $B\subseteq O \cap A$ and $|C|\leq \varepsilon n$. Then we have $g(A,r)< r^{\lceil n/2\rceil}\left(3-2/r\right)$. 
\end{lemma}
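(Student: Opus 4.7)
\textit{Proof plan.} The strategy is to produce an element $t\in A$ for which the link graph $L_t(A)$ admits a matching of size at least $2c-1$, so that Proposition~\ref{prop:larmat}(ii) applies. The starting point is the observation that $A$ must contain at least $c\geq 2$ even integers: since $B$ consists only of odd integers and $|A|=\lceil n/2\rceil + c$ exceeds the number of odd integers in $[n]$, the even elements of $A$ all lie in $C$, and there are at least $c$ of them. I fix an arbitrary such even element $t\in C$.

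Let $M$ denote the set of odd integers in $[n]\setminus A$, and let $n_e$ be the number of even elements of $A$. Then $|M|=n_e-c$ and, since $n_e\leq |C|\leq\varepsilon n$, we have $|M|\leq \varepsilon n-c$. I now count the restricted Schur triples in $[n]$ containing $t$ whose other two entries are both odd. Since $t$ is even, these split into two disjoint families: (a) $\{x,y,t\}$ with $x<y<t$, $x+y=t$, and $x,y$ odd; (b) $\{t,x,y\}$ with $t+x=y$ and $x,y$ odd in $[1,n]$. A direct count over odd residues yields at least $n/2-t/4-O(1)\geq n/4-O(1)$ such triples, uniformly in even $t\in[1,n]$. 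Each $m\in M$ can sit in at most two of them, because once $m$ and $t$ are fixed the remaining entry must equal $m+t$ or $|m-t|$. Consequently,
\[
s(t,A)\;\geq\;\frac{n}{4}-O(1)-2|M|\;\geq\;\frac{n}{4}-2\varepsilon n+2c-O(1).
\]

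By the inequality~(\ref{ineq:mat}), $k(t,A)\geq s(t,A)/2\geq n/8-\varepsilon n+c-O(1)$. With $\varepsilon\leq 1/36$ the right-hand side is at least $7n/72+c-O(1)$, and since $c\leq \varepsilon n\leq n/36=2n/72$ one has $7n/72+c-O(1)\geq 2c-1$ for $n$ sufficiently large. Hence $k(t,A)\geq 2c-1$, and Proposition~\ref{prop:larmat}(ii) yields $g(A,r)<r^{\lceil n/2\rceil}(3-2/r)$, as required.

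The only delicate point is the triple-counting: one must verify that families (a) and (b) are genuinely disjoint (they are, since the largest entry is $t$ in (a) but strictly greater than $t$ in (b)) and that the ``at most two'' bound when subtracting the $M$-contributions leaves enough slack to reach the threshold $2c-1$ under the tight hypothesis $\varepsilon\leq 1/36$. Everything else reduces to elementary parity arithmetic.
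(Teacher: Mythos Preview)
Your proof is correct and follows essentially the same approach as the paper: both fix an even element $t\in A$, show that $k(t,A)$ is linear in $n$, and invoke Proposition~\ref{prop:larmat}(ii). The only difference is in how the lower bound on $k(t,A)$ is obtained: the paper reduces to $k(t,O)$, then case-splits on whether $t\geq n/3$ to obtain $k(t,O)\geq n/12-1$, whereas you count both triple families (a) and (b) together to get $s(t,A)\geq n/4-2\varepsilon n-O(1)$ and then use $k(t,A)\geq s(t,A)/2$, yielding the slightly stronger bound $k(t,A)\geq n/8-\varepsilon n-O(1)$. Your combined count is a bit cleaner (no case split) at the cost of carrying an unspecified $O(1)$, which forces an implicit ``$n$ large'' assumption; the paper's explicit constants make the bound $\varepsilon\leq 1/36$ exactly sharp for their argument.
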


\begin{proof}
From the assumption of $A$, there must be an even number $t\in A$.
By Proposition~\ref{prop:larmat}(ii) and $\varepsilon \leq 1/36$, it is sufficient to show that $k(t, A)\geq (1/12-\varepsilon)n -1\ge 2c-1$. 

Recall that $O$ is the set of all odd numbers in $[n]$. Since $|A|\geq \lceil n/2\rceil + 1$ and $|C|\leq \varepsilon n$, we have $|O\setminus B|\leq \varepsilon n$.
Then,
\[
k(t, A)\geq k(t, B\cup\{t\})\geq k(t, O\cup\{t\})-\varepsilon n,
\]
and thus it is equivalent to show that $k(t, O\cup\{t\})\geq n/12 - 1$.

If $t\geq n/3$, we immediately have 
\[
k(t, O\cup\{t\})\geq \left|\left\{(i, t-i, t), i\in O\cap\left[t/2-1\right]\right\}\right|\geq t/4-1\geq n/12-1.
\]
If $t < n/3$, then by~(\ref{ineq:mat}) we obtain that
\[
k(t, O\cup\{t\})\geq s(t, O\cup\{t\})/2\geq \left|\left\{(t, i, t+i), i\in O\cap\left[t+1,\  n-t\right]\right\}\right|/2
\geq (n-2t)/4\geq n/12. 
\]
This completes the proof.
\end{proof}
\begin{lemma}\label{lem:staint}
Let $r\geq 8$, $0<\varepsilon <1/n^2$, and $A$ be a subset of $[n]$ of size $\lceil n/2\rceil + c$, where $1 < c \leq \varepsilon n$. Suppose that there exists a partition $A=B\cup C$ such that $B\subseteq I_0 \cap A$ and $|C|\leq \varepsilon n$. Then the following holds.\smallskip
\begin{compactenum}[\rm (i)]
  \item If $n$ is even, then $g(A,r)< r^{\lceil n/2\rceil+1}$.\smallskip
  \item If $n$ is odd, then $g(A,r)< r^{\lceil n/2\rceil}\left(3-2/r\right)$.
\end{compactenum}
\end{lemma}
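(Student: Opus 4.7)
The plan is to produce an element $t\in A$ such that $k(t,A)$ meets the threshold of Proposition~\ref{prop:larmat}: namely, $k(t,A)\geq 2(c-1)$ in part (i) and $k(t,A)\geq 2(c-1)+1$ in part (ii). Because $B\subseteq I_0=[\lfloor n/2\rfloor+1,n]$ is itself restricted sum-free (its two smallest elements already sum to more than $n$), every restricted Schur triple in $A$ must use an element of $A\cap[1,\lfloor n/2\rfloor]$; and since $|A|=\lceil n/2\rceil+c>|I_0|$ there are at least $c\geq 2$ such elements, so $t:=\min A$ satisfies $1\leq t\leq \lfloor n/2\rfloor-c+1$. Orienting each edge $\{x,x+t\}$ of $L_t(A)$ from $x$ to $x+t$ produces a DAG of maximum degree two, so $L_t(A)$ is a disjoint union of paths; hence $k(t,A)\geq \lceil E(L_t(A))/2\rceil$, and whenever $L_t(A)$ has no path of length at least two, $k(t,A)=E(L_t(A))$.

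I split into two regimes according to the size of $t$. If $t\leq n/3$, I count the edges of $L_t(A)$ lying entirely inside $I_0$, namely pairs $\{x,x+t\}$ with $x\in[\lfloor n/2\rfloor+1,n-t]$ and both endpoints in $A$. The range for $x$ has size $\lceil n/2\rceil-t\geq n/6$, while $|I_0\setminus A|\leq |I_0|-|B|\leq \varepsilon n-c$ destroys at most $2(\varepsilon n-c)$ of these pairs; this gives $E(L_t(A))\geq n/6-2\varepsilon n+2c$, and for $\varepsilon$ small enough $k(t,A)\geq E(L_t(A))/2\geq n/18+c\geq 2c-1$, using $c\leq \varepsilon n$. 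If instead $t>n/3$, no three elements of $A\subseteq[t,n]$ can lie in arithmetic progression with common difference $t$: the smallest such element $x$ would need $x\geq t$ and $x+2t\leq n$, forcing $3t\leq n$, a contradiction. Hence every component of $L_t(A)$ is either a single edge or an isolated vertex, so $k(t,A)=E(L_t(A))$. The sumset bound $|A\cap(A-t)|\geq 2|A|-|A\cup(A-t)|\geq 2|A|-(n+1)$ evaluates to $2c-1$ for even $n$ and $2c$ for odd $n$; subtracting at most one for the pair $\{t,2t\}$ (which is excluded from $L_t(A)$ because $t\notin V(L_t(A))$) gives $E(L_t(A))\geq 2c-2$ when $n$ is even and $\geq 2c-1$ when $n$ is odd.

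Combining the two regimes, $k(t,A)\geq 2(c-1)$ for even $n$ and $k(t,A)\geq 2(c-1)+1$ for odd $n$, and Proposition~\ref{prop:larmat} then delivers both of the claimed bounds. The delicate point is the regime in which $t$ is close to $\lfloor n/2\rfloor-c+1$: there the crude edge count $\lceil n/2\rceil-t$ is too small to survive a halving loss in the matching bound. The saving grace is that the rigidity forced by $A\subseteq[t,n]$ collapses $L_t(A)$ into isolated edges, so the sumset lower bound on $|A\cap(A-t)|$ translates directly, without any matching loss, into the matching of the size we need.
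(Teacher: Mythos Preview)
Your proof is correct and takes a genuinely different, more economical route than the paper's.

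The paper also picks $m=\min A$ but then splits into \emph{four} cases according to where $m$ lies relative to the parameters $d=|I_0\setminus A|$ and $c$: roughly, $m\leq 4\varepsilon n$ (large matching in $I_0$), a ``middle'' range where the number of path-components of $L_m(I_0)$ is counted, and two narrow ranges near $\lfloor n/2\rfloor$ where $k(m,A)$ is computed exactly as $\lceil n/2\rceil+(c-1)-m$. Your two-case split at $t=n/3$ subsumes all of this. For $t\leq n/3$ you recover the paper's ``$m$ small'' argument (edges inside $I_0$ with gap $t$). The real gain is your $t>n/3$ case: the observation that $A\subseteq[t,n]$ and $3t>n$ force $L_t(A)$ to be a union of isolated edges, so $k(t,A)=e(L_t(A))$, together with the inclusion--exclusion bound $|A\cap(A-t)|\geq 2|A|-(n+1)$, lands precisely on $2(c-1)$ for even $n$ and $2(c-1)+1$ for odd $n$. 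This single stroke replaces the paper's Cases~2--4 and removes the need to track $d$ at all. The paper's case analysis is more explicit about the combinatorial picture in each regime, but your argument is shorter and isolates the one structural fact (no $3$-term progression of gap $t$ inside $[t,n]$ once $3t>n$) that makes the tight endgame work.

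Two minor remarks on presentation: your ``subtracting at most one for the pair $\{t,2t\}$'' is really subtracting the indicator $[\,2t\in A\,]$, since $x=t$ in $A\cap(A-t)$ corresponds to the non-restricted triple $(t,t,2t)$ rather than to an edge of $L_t(A)$; and the bound you actually get in the $t\leq n/3$ regime is $k(t,A)\geq n/12-\varepsilon n+c$, which already exceeds $2c-1$ once $\varepsilon\leq 1/24$ and $c\leq\varepsilon n$, so the intermediate ``$n/18+c$'' is not needed.
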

\begin{proof}
Let $m$ be the minimum element of $A$, and clearly $m \leq\lfloor n/2\rfloor -(c-1)$.
Recall that $I_0=[\lfloor n/2\rfloor +1, n]$.
Let $d=|I_0\setminus B|$. From the assumption of $A$, we have $d\leq \varepsilon n$. 
For simplicity, let  $I_{0}' = I_0 \cup \{m\}$.
We divide the proof into four cases.\smallskip

\noindent\textbf{Case 1: $m\leq d+3(c-1)$.} In this case, we have $m\leq 4\varepsilon n$.
Similar to the proof of Lemma~\ref{lem:staodd}, we have
\[
k(m, A)\geq k(m, I'_0) - d
\geq s(m, I'_0)/2 - \varepsilon n
\geq (n/2 - m)/2 - \varepsilon n
\geq n/4 - 3\varepsilon n,
\]
which, together with Proposition~\ref{prop:larmat}(ii) and $\varepsilon< 1/n$, completes the proof.\smallskip

\noindent\textbf{Case 2: $d+3(c-1)< m \leq \lceil n/2 \rceil-d-3(c-1)$.} 
Since $m\leq n/2$, each nontrivial component of $L_m(I'_0)$ is a path, and there are $\min\left\{m, \lceil n/2\rceil - m\right\}\geq d+3(c-1)$ of them.
Therefore we have
\[
 k(m, A)\geq k(m, I'_0) - d
 \geq d+3(c-1) - d=3(c-1)
\]
which, together with Proposition~\ref{prop:larmat}(ii) and $c>1$, completes the proof.\smallskip

\noindent\textbf{Case 3: $\lceil n/2\rceil-d-3(c-1)< m \leq \lceil n/2\rceil-2(c-1)$.}
By the choice of $m$, each nontrivial component of $L_m(A)$ is a path of length 1, and the number of them is at least
\[
s(m, [m, n])- |[m+1, n]\setminus A|
= n-2m - (n -|A| -(m-1))
= \lceil n/2\rceil +(c-1)-m.
\]
Therefore, we obtain that 
\[
k(m, A)=\lceil n/2\rceil +(c-1)-m\geq 3(c-1), 
\]
which completes the proof together with Proposition~\ref{prop:larmat}(ii) and $c>1$.\smallskip

\noindent\textbf{Case 4: $\lceil n/2\rceil-2(c-1)<m \leq \lfloor n/2\rfloor-(c-1)$.}
Similarly as in Case 3, we obtain that $k(m, A)=\lceil n/2\rceil +(c-1)-m$. By the choice of $m$, for even $n$, we have $k(m, A)\geq 2(c-1)$, while for odd $n$, $k(m, A)\geq 2(c-1)+1$.
By Proposition~\ref{prop:larmat}, this gives the desired upper bounds.
\end{proof}

\begin{proof}[{\bf Proof of Theorem~\ref{thm: r8}.}]
Here we only prove (i) as the proof of (ii) is similar.
If $|A|=\lceil n/2 \rceil +1$ and $A\neq I_2$, then $A$ must have at least one restricted Schur triple, and therefore $g(A, r)<g(I_2, r)=r^{\lceil n/2 \rceil +1}$.
When $|A|>\lceil n/2 \rceil +1$, choose a constant $\varepsilon < 1$, which satisfies the assumptions of Lemmas~\ref{lem:staodd} and~\ref{lem:staint}. Then by Theorem~\ref{thm:sta}, we can further assume that $|A \bigtriangleup O|\leq \eps n$, or $|A\bigtriangleup I_0|\leq \eps n$. Applying Lemmas~\ref{lem:staodd} and~\ref{lem:staint} (i) on $A$, for both cases, we obtain $g(A, r)<r^{\lceil n/2 \rceil +1}$.
\end{proof}


\section{Concluding Remarks}
Our investigation raises many open problems. 
In this paper, we determine the rainbow $r$-extremal sets, that is, the subsets of $[n]$ which maximize the number of rainbow sum-free $r$-colorings, for $r\leq3$ and $r\geq8$. However, for $r\in\{4,5,6,7\}$, although Theorem~\ref{thm:sta} says the rainbow $r$-extremal sets should be close to what we expect, our proofs cannot give the exact structure of the extremal sets. 
Therefore, the most interesting question is to determine the unsolved cases of Conjecture~\ref{conj}. 
Recall that $I_1=[\frac{n}{2}-1,n]$ and $I_3=[\frac{n-1}{2},n]$. 

\begin{conj}\label{conj:concluding}
Let $n, r$ be positive integers  and $4\leq r\leq 7$.\smallskip
\begin{compactenum}[\rm (i)]
\item If $n$ is even, then $g(n, r)=r^{n/2}\left(3 - 2/r\right)^2$, and $I_1$ is the unique rainbow $r$-extremal set.\smallskip
\item If $n$ is odd and $r=4$, then $g(n, r)=g([n], r)$, and $[n]$ is the unique rainbow $r$-extremal set.\smallskip
\item If $n$ is odd and $5\leq r\leq 7$, then $g(n, r)=r^{\lceil n/2\rceil}\left(3 - 2/r\right)$, and $I_3$ is the unique rainbow $r$-extremal set.
\end{compactenum}
\end{conj}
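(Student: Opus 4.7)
Plan. I would attack Conjecture~\ref{conj:concluding} by following the same high-level template as the proof of Theorem~\ref{thm: r8}: reduce to near-extremal sets via Theorem~\ref{thm:sta}, then pin down the precise maximum by a careful local analysis around each candidate. By Theorem~\ref{thm:sta}, it suffices to handle $A\subseteq[n]$ with $|A\triangle X|\leq \varepsilon n$ for some $X\in\{O,I_0\}$ (if $r\geq 5$) or $X\in\{[n],O,I_0\}$ (if $r=4$). For each such near-extremal configuration, write $A=(X\setminus D)\cup S$ with $|S|,|D|\leq \varepsilon n$, and estimate $g(A,r)$ in terms of the precise location of $S$ and $D$. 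The novel ingredient compared with the $r\geq 8$ regime is the need to finish the $X=[n]$ case for $r=4$, which does not arise in Theorem~\ref{thm: r8}.

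The central obstruction is that the inequality $(3r-2)^2/r^3<1$ powering Proposition~\ref{prop:larmat} fails for $4\leq r\leq 7$: the ratios are roughly $1.56, 1.35, 1.19, 1.05$ for $r=4,5,6,7$, so a single extra free vertex can in principle compensate for a single rainbow Schur triple. My plan is to sharpen the matching bound by exploiting finer structure of the link graph $L_t(A)$ at a small element $t\in S$. Specifically, when $|S|\geq 2$, any small $t\in S$ near $n/2$ participates in many restricted Schur triples with partners in $I_0\cap A$, and distinct elements of $S$ create additional triples not sharing a common vertex; each additional triple beyond the conjectured minimum should contribute a factor $(3r-2)/r^2<1$ that, by careful accounting, defeats the free-vertex gain of $r$ for every deviation from the conjectured extremal set.

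Concretely, one enumerates over pairs $(|S|,|D|)$ with $|S|-|D|=c$, where $c\in\{1,2\}$ is the conjectural excess; for each configuration, refined counting should yield a bound $g(A,r)\leq r^{|A|}\prod_{t\in S}\alpha_r(t;A)$, with $\alpha_r(t;A)<1$ an explicit function of the triples containing $t$. The $X=[n]$ case for $r=4$ requires a separate delicate numerical comparison driven by Theorem~\ref{thm: hrange}: one has $g(I_1,4)=6.25\cdot 4^{n/2}$, $g([n],4)\approx 6\cdot 4^{n/2}$ for even $n$, and $g(I_3,4)=10\cdot 4^{(n-1)/2}$ versus $g([n],4)\approx 12\cdot 4^{(n-1)/2}$ for odd $n$, so $g(I_1,4)>g([n],4)>g(I_3,4)$. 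This is precisely why $I_1$ wins for even $n$ but $[n]$ wins for odd $n$ at $r=4$, and the proof must exploit the second-order terms in Theorem~\ref{thm: hrange} to rule out perturbations of $[n]$. The $O$-close case is relatively easier since moving away from $O$ destroys odd elements, and every even element added to (a subset of) $O$ creates many disjoint Schur triples of the form $\{t,x,t+x\}$.

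The principal difficulty is the shrinking slack as $r\to 7$, where $(3r-2)^2/r^3\approx 1.05$: the higher-order structural gains in $L_t(A)$ must be tracked with first-order precision, and the ensuing case analysis is substantially more involved than that of Section~5. A potentially cleaner route would be to apply a weighted variant of the container theorem to the $3$-uniform hypergraph of rainbow Schur triples, weighting each rainbow configuration so that the trade-off between free vertices and triples is encoded directly in the hypergraph; such a weighted stability-plus-uniqueness statement, if available, should give all three parts of Conjecture~\ref{conj:concluding} uniformly for $r\in\{4,5,6,7\}$, with the parity of $n$ and the distinction between $X=[n]$ and $X=I_0,O$ handled by a single numerical comparison at the end.
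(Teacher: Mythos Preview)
This statement is Conjecture~\ref{conj:concluding}, which the paper explicitly leaves \emph{open} in Section~6; there is no proof in the paper to compare against. Your high-level template---reduce via Theorem~\ref{thm:sta} to sets close to $O$, $I_0$, or (for $r=4$) $[n]$, and then carry out a local analysis---is the natural strategy and is precisely where the paper stops. Your numerical comparisons for $r=4$ are correct and do explain the parity dichotomy in the conjecture, and you are right that the $O$-close case goes through essentially as in Lemma~\ref{lem:staodd}: any even element of $A$ yields a matching of linear size, far exceeding any constant multiple of $c$.

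The genuine gap is the $I_0$-close local analysis, and your proposal does not close it. Two concrete issues. First, the product bound $g(A,r)\leq r^{|A|}\prod_{t\in S}\alpha_r(t;A)$ is unjustified as stated: Schur triples attached to distinct small elements $t_1,t_2\in S$ typically share vertices (for instance $\{t_1,x,t_1+x\}$ and $\{t_2,x,t_2+x\}$ share $x$, and $\{t_1,t_2,t_1+t_2\}$ involves both), so the constraints are not independent and multiplying per-$t$ factors overcounts the saving. Second, the single-element matching bound of Proposition~\ref{prop:larmat} cannot be pushed far enough here: for $A=[\lfloor n/2\rfloor-(c-1),\,n]$ one has exactly $k(m,A)=2(c-1)$ (even $n$) or $2(c-1)+1$ (odd $n$), whereas for $4\le r\le 7$ the threshold needed to beat the target is $k>\tfrac{\log r}{2\log r-\log(3r-2)}(c-1)$, which ranges from about $2.05(c-1)$ at $r=7$ to about $2.95(c-1)$ at $r=4$. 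In particular the conjectured extremizer $I_1$ itself has $c=2$, $k=2$, and \emph{saturates} the matching bound, so any successful argument must distinguish $I_1$ from its competitors by something finer than the size of a single matching. Your weighted-container suggestion is a plausible direction but is speculative and not available in the paper; absent such a device, the plan remains a heuristic outline rather than a proof.
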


Another direction is that one can consider various generalizations of this problem.
Recall that a sum-free set is a set forbidding the solutions of the linear equation $x_1+x_2=y$. 
It is natural to extend the Erd\H{o}s-Rothschild problems to sets forbidding solutions of other linear equations, for example, the $(k, \ell)$-free sets, that is, the sets without nontrivial tuples $\{x_1, \ldots, x_{k}, y_1,\ldots, y_{\ell}\}$ satisfying $\sum_{i=1}^kx_i = \sum_{j=1}^\ell y_j$. 
It is possible that the method used to prove Theorem~\ref{thm: hrange} can prove the analogous results for some other $(k, \ell)$-free sets. However, the stability analysis on other parts would be very involved.

One could also broaden the study of rainbow Erd\H{o}s-Rothschild problems to various other extremal problems in this fashion. 
In the rainbow Erd\H{o}s-Rothschild problems studied to date, that is, the Gallai colorings and the rainbow sum-free colorings, for $r=3$ the configurations maximizing the number of such colorings are complete graphs or the whole intervals, while for sufficiently large $r$ the optimal configurations are those solving the original extremal problems.  
It would be very interesting to determine the threshold of $r$ to ensure that the extremal configurations for the uncolored problems are optimal for rainbow Erd\H{o}s-Rothschild problems.

\section*{Acknowledgements}
This work was supported by the Natural Science Foundation of China  (12231018) and  Young Taishan Scholars program of Shandong Province (201909001).
Part of the work was done while the second and the third authors were visiting the School of Mathematics at Shandong University. They would like to thank the school for the hospitality they received. In addition,
the authors thank Tuan Tran for pointing out the reference for
Theorem~\ref{thm:staden}, and the anonymous referee for carefully reading the manuscript and providing many helpful comments.

\bibliographystyle{abbrv}
\bibliography{ref}
\end{document}